\documentclass{amsart}

\usepackage{amsfonts,amsmath,amsthm,thmtools,thm-restate, amssymb, mathtools,verbatim,textcomp,amscd}
\usepackage{latexsym,epsfig,verbatim}
\usepackage{graphics,textcomp,  comment}
\usepackage{graphicx}
\usepackage{color}
\usepackage{url}
\usepackage{psfrag}
\usepackage[utf8]{inputenc}


\usepackage{amssymb}
\usepackage{nopageno}
\usepackage{amsmath,amsthm,thm-restate,thmtools,amsfonts,amssymb}
\usepackage{ifxetex}
\usepackage{enumitem, multicol}

\usepackage{amsmath,amsthm, amssymb, bm, bigints, upgreek}    
\usepackage{graphicx, verbatim, color,  url}   
\usepackage[usenames,dvipsnames]{xcolor} 

\begin{document}

	\newtheorem{theorem}{Theorem}[section]
	\newtheorem{prop}[theorem]{Proposition}
	\newtheorem{lemma}[theorem]{Lemma}
	\newtheorem{cor}[theorem]{Corollary}
	\newtheorem{definition}[theorem]{Definition}
	\newtheorem{conj}[theorem]{Conjecture}
	\newtheorem{claim}[theorem]{Claim}
	\newtheorem{qn}[theorem]{Question}
	\newtheorem{defn}[theorem]{Definition}
	\newtheorem{defth}[theorem]{Definition-Theorem}
	\newtheorem{obs}[theorem]{Observation}
	\newtheorem{rmk}[theorem]{Remark}
	\newtheorem{ans}[theorem]{Answers}
	\newtheorem{slogan}[theorem]{Slogan}
	
	\newtheorem{sublem}[theorem]{Sublemma}
	
	\newtheorem{fact}[theorem]{Fact}
	\newtheorem*{fact*}{Fact}

	\newtheorem{notn}[theorem]{Notation}
	\newtheorem{eg}[theorem]{Example}

	\newcommand{\boundary}{\partial}
	\newcommand{\hhat}{\widehat}
	\newcommand{\C}{{\mathbb C}}
	\newcommand{\Ga}{{\Gamma}}
	\newcommand{\G}{{\Gamma}}
	\newcommand{\s}{{\Sigma}}
	\newcommand{\PSL}{{PSL_2 (\mathbb{C})}}
	\newcommand{\pslc}{{PSL_2 (\mathbb{C})}}
	\newcommand{\pslr}{{PSL_2 (\mathbb{R})}}
	\newcommand{\Gr}{{\mathcal G}}
	\newcommand{\integers}{{\mathbb Z}}
	\newcommand{\natls}{{\mathbb N}}
	\newcommand{\ratls}{{\mathbb Q}}
	\newcommand{\reals}{{\mathbb R}}
	\newcommand{\proj}{{\mathbb P}}
	\newcommand{\lhp}{{\mathbb L}}
	\newcommand{\tube}{{\mathbb T}}
	\newcommand{\cusp}{{\mathbb P}}
	\newcommand\AAA{{\mathcal A}}
	\newcommand\HHH{{\mathbb H}}
	\newcommand\BB{{\mathcal B}}
	\newcommand\CC{{\mathcal C}}
	\newcommand\DD{{\mathcal D}}
	\newcommand\EE{{\mathcal E}}
	\newcommand\FF{{\mathcal F}}
	\newcommand\GG{{\mathcal G}}
	\newcommand\HH{{\mathcal H}}
	\newcommand\II{{\mathcal I}}
	\newcommand\JJ{{\mathcal J}}
	\newcommand\KK{{\mathcal K}}
	\newcommand\LL{{\mathcal L}}
	\newcommand\MM{{\mathcal M}}
	\newcommand\NN{{\mathcal N}}
	\newcommand\OO{{\mathcal O}}
	\newcommand\PP{{\mathcal P}}
	\newcommand\QQ{{\mathcal Q}}
	\newcommand\RR{{\mathcal R}}
	\newcommand\SSS{{\mathcal S}}
	\newcommand\TT{{\mathcal T}}
	\newcommand\UU{{\mathcal U}}
	\newcommand\VV{{\mathcal V}}
	\newcommand\WW{{\mathcal W}}
	\newcommand\XX{{\mathcal X}}
	\newcommand\YY{{\mathcal Y}}
	\newcommand\ZZ{{\mathcal Z}}
	\newcommand\CH{{\CC\Hyp}}
	\newcommand{\Chat}{{\hat {\mathbb C}}}
	\newcommand\MF{{\MM\FF}}
	\newcommand\PMF{{\PP\kern-2pt\MM\FF}}
	\newcommand\ML{{\MM\LL}}
	\newcommand\PML{{\PP\kern-2pt\MM\LL}}
	\newcommand\GL{{\GG\LL}}
	\newcommand\Pol{{\mathcal P}}
	\newcommand\half{{\textstyle{\frac12}}}
	\newcommand\Half{{\frac12}}
	\newcommand\Mod{\operatorname{Mod}}
	\newcommand\Area{\operatorname{Area}}
	\newcommand\ep{\epsilon}
	\newcommand\Hypat{\widehat}
	\newcommand\Proj{{\mathbf P}}
	\newcommand\U{{\mathbf U}}
	\newcommand\Hyp{{\mathbf H}}
	\newcommand\D{{\mathbf D}}
	\newcommand\Z{{\mathbb Z}}
	\newcommand\R{{\mathbb R}}
	\newcommand\Q{{\mathbb Q}}
	\newcommand\E{{\mathbb E}}
	\newcommand\EXH{{ \EE (X, \HH_X )}}
	\newcommand\EYH{{ \EE (Y, \HH_Y )}}
	\newcommand\GXH{{ \GG (X, \HH_X )}}
	\newcommand\GYH{{ \GG (Y, \HH_Y )}}
	\newcommand\ATF{{ \AAA \TT \FF }}
	\newcommand\PEX{{\PP\EE  (X, \HH , \GG , \LL )}}
	\newcommand{\lct}{\Lambda_{CT}}
	\newcommand{\lel}{\Lambda_{EL}}
	\newcommand{\lgel}{\Lambda_{GEL}}
	\newcommand{\lre}{\Lambda_{\mathbb{R}}}

	\newcommand\til{\widetilde}
	\newcommand\length{\operatorname{length}}
	\newcommand\tr{\operatorname{tr}}
	\newcommand\gesim{\succ}
	\newcommand\lesim{\prec}
	\newcommand\simle{\lesim}
	\newcommand\simge{\gesim}
	\newcommand{\simmult}{\asymp}
	\newcommand{\simadd}{\mathrel{\overset{\text{\tiny $+$}}{\sim}}}
	\newcommand{\ssm}{\setminus}
	\newcommand{\diam}{\operatorname{diam}}
	\newcommand{\pair}[1]{\langle #1\rangle}
	\newcommand{\T}{{\mathbf T}}
	\newcommand{\inj}{\operatorname{inj}}
	\newcommand{\pleat}{\operatorname{\mathbf{pleat}}}
	\newcommand{\short}{\operatorname{\mathbf{short}}}
	\newcommand{\vertices}{\operatorname{vert}}
	\newcommand{\collar}{\operatorname{\mathbf{collar}}}
	\newcommand{\bcollar}{\operatorname{\overline{\mathbf{collar}}}}
	\newcommand{\I}{{\mathbf I}}
	\newcommand{\tprec}{\prec_t}
	\newcommand{\fprec}{\prec_f}
	\newcommand{\bprec}{\prec_b}
	\newcommand{\pprec}{\prec_p}
	\newcommand{\ppreceq}{\preceq_p}
	\newcommand{\sprec}{\prec_s}
	\newcommand{\cpreceq}{\preceq_c}
	\newcommand{\cprec}{\prec_c}
	\newcommand{\topprec}{\prec_{\rm top}}
	\newcommand{\Topprec}{\prec_{\rm TOP}}
	\newcommand{\fsub}{\mathrel{\scriptstyle\searrow}}
	\newcommand{\bsub}{\mathrel{\scriptstyle\swarrow}}
	\newcommand{\fsubd}{\mathrel{{\scriptstyle\searrow}\kern-1ex^d\kern0.5ex}}
	\newcommand{\bsubd}{\mathrel{{\scriptstyle\swarrow}\kern-1.6ex^d\kern0.8ex}}
	\newcommand{\fsubeq}{\mathrel{\raise-.7ex\hbox{$\overset{\searrow}{=}$}}}
	\newcommand{\bsubeq}{\mathrel{\raise-.7ex\hbox{$\overset{\swarrow}{=}$}}}
	\newcommand{\tw}{\operatorname{tw}}
	\newcommand{\base}{\operatorname{base}}
	\newcommand{\trans}{\operatorname{trans}}
	\newcommand{\rest}{|_}
	\newcommand{\bbar}{\overline}
	\newcommand{\UML}{\operatorname{\UU\MM\LL}}
	\newcommand{\EL}{\mathcal{EL}}
	\newcommand{\qle}{\lesssim}

	\newcommand\Gomega{\Omega_\Gamma}
	\newcommand\nomega{\omega_\nu}

\newcommand\pref[1]{(\ref{#1})}
\newcommand{\Tau}{\mathrm{T}}

\newcommand{\out}{\mathsf{Out(\mathbb{F})}}
\newcommand{\F}{\ensuremath{\mathbb{F} } }
\newcommand{\aut}{\mathsf{Aut(\mathbb{F})}}

\title{Regluing graphs of Free Groups}

\author{Pritam Ghosh}
\address{Department of Mathematics, Ashoka University, Haryana, India}
\email{pritam.ghosh@ashoka.edu.in}

\author{Mahan Mj}
\address{School of Mathematics, Tata Institute of Fundamental Research, 1 Homi Bhabha Road, Mumbai 400005, India}

\email{mahan@math.tifr.res.in}
\email{mahan.mj@gmail.com}
\urladdr{http://www.math.tifr.res.in/~mahan}

\thanks{MM is   supported by  the Department of Atomic Energy, Government of India, under project no.12-R\&D-TFR-14001.
	MM is also supported in part by a Department of Science and Technology JC Bose Fellowship, CEFIPRA  project No. 5801-1, a SERB grant MTR/2017/000513, and an endowment of the Infosys Foundation via the Chandrasekharan-Infosys Virtual Centre for Random Geometry.   This material is based upon work partially supported by the National Science Foundation
	under Grant No. DMS-1928930 while MM participated in a program hosted
	by the Mathematical Sciences Research Institute in Berkeley, California, during the
	Fall 2020 semester.} 
	
\thanks{P. Ghosh is supported by the faculty research grant of Ashoka university.}
\subjclass[2010]{20F65, 20F67} 
\keywords{$Out(\F)$, hyperbolic automorphism, independence of automorphisms, homogeneous graph of spaces}

\date{\today}

\begin{abstract}
	Answering a question due to Min, we prove that a finite
	 graph of roses admits a  regluing such that the resulting graph of roses has hyperbolic fundamental group.
\end{abstract}

\maketitle

\tableofcontents

\maketitle

\section{Introduction }\label{sec-intro}
Let $\GG$ be a finite graph and $\pi: \XX \to \GG$ be a finite graph of spaces where each vertex and edge space is a finite graph and the edge-to-vertex maps are homotopic to covering maps of finite degree. We call such a graph of spaces a 
homogeneous graph of roses. 
Cutting along the edge graphs and pre-composing one of the resulting attaching maps
by homotopy equivalences inducing  hyperbolic automorphisms of the corresponding edge groups,we obtain a hyperbolic regluing of  $\pi: \XX \to \GG$, the initial homogeneous graph  of roses
(see Section \ref{sec-reglue} for more precise details).
A consequence of the main theorem of this paper is:

\begin{theorem}\label{main-intro}
Given a homogeneous graph of roses, there exist hyperbolic regluings such that the resulting graph of spaces has hyperbolic fundamental group.
\end{theorem}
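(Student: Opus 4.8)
The plan is to build a version of the Bestvina–Feighn Combination Theorem adapted to this setting and then verify its hypotheses for a suitable regluing. Recall that $\XX$ is assembled from vertex roses $R_v$ and edge roses $R_e \times [0,1]$, glued by $\pi_1$-injective maps that are homotopic to finite covers. The universal cover $\til\XX$ is then a tree of spaces over the Bass–Serre tree $T$ of $\GG$, whose vertex and edge spaces are copies of universal covers of roses, i.e.\ trees (quasi-isometric to the relevant free groups). To prove $\pi_1(\XX)$ is hyperbolic after regluing, I would invoke the Bestvina–Feighn criterion: the pieces are hyperbolic (free groups), so it suffices to produce regluing automorphisms for which (a) the edge-to-vertex inclusions are quasi-isometric embeddings with uniform constants (``qi-embedded condition''), and (b) the ``hallways flare'' condition holds along $T$. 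Condition (a) will be automatic or easy since free-group covers are convex-cocompact; the entire content is (b).

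The key mechanism for the flaring condition is the hyperbolicity (in the sense of $\aut$/$\out$) of the gluing automorphisms, together with an independence/mixing condition among the several automorphisms attached to the various edges at a given vertex. Concretely, at each edge $e$ incident to a vertex $v$ we precompose one attaching map with a homotopy equivalence inducing an automorphism $\phi_e$ of the edge free group; the relevant object controlling flaring of hallways crossing $v$ is the behavior of words under alternating application of the $\phi_e$'s (and their geometric realizations via the covering maps into $R_v$). So the first main step is: choose the $\phi_e$ to be fully irreducible (iwip) hyperbolic automorphisms, with attracting/repelling laminations in ``general position'' — no shared lamination, and more strongly, the attracting lamination of one not carried by, or aligned with, the repelling lamination of another, across the relevant change-of-coordinate maps induced by the finite covers. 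The existence of such tuples is a genericity statement; I would prove it by exhibiting explicit iwips (e.g.\ via train-track maps with large expansion factor) and then perturb, or by a ping-pong/Tits-alternative style argument, to guarantee pairwise (indeed collective) independence of the laminations after pulling back through the edge-to-vertex covering maps.

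With such a tuple fixed, the second main step is the flaring estimate proper. A hallway of length $2k$ over a geodesic segment in $T$ projects to a sequence of paths in the vertex and edge roses, alternately pushed by the covering maps and by the $\phi_e^{\pm1}$. Using the expansion factors $\lambda(\phi_e)>1$ on one side and $\lambda(\phi_e^{-1})>1$ on the other, together with the independence of laminations (which prevents the length-decreasing directions of consecutive maps from conspiring), one shows that the girth of the hallway must grow exponentially in $k$ away from its thinnest point — this is the standard ``a product of hyperbolic isometries with independent axes is still exponentially expanding'' phenomenon, transported to the free-group/outer-space setting where ``axis'' is replaced by ``attracting–repelling lamination pair''. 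I would set this up using the metric on Outer space or, more hands-on, using relative train track representatives and the bounded cancellation lemma to control how lengths of conjugacy classes (or of the relevant paths in the covers) evolve. The bookkeeping across vertices of different valence, and the fact that edge groups inject as proper finite-index subgroups rather than isomorphically, is where uniformity of constants has to be watched carefully.

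I expect the main obstacle to be precisely the interaction between the finite covering maps and the independence of laminations: the automorphisms $\phi_e$ act on the edge group, but flaring is felt in the vertex group, and the edge group sits inside the vertex group via a finite cover $R_e \to R_v$ that need not be canonical. So ``independence'' must be formulated and verified for the induced correspondences on laminations of the vertex group, and one must ensure that composing with these covering correspondences does not destroy the hyperbolic dynamics (e.g.\ that no attracting lamination of a $\phi_e$ maps to something invariant under another $\phi_{e'}$'s dynamics after the change of groups). Establishing a clean ``any generic tuple of iwips works'' statement at this level — presumably the technical heart alluded to by the keyword \emph{independence of automorphisms} — is the step I would budget the most effort for; granting it, the combination theorem and the flaring computation should proceed along well-trodden lines.
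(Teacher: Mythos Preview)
Your outline is essentially the paper's approach: apply the Bestvina--Feighn combination theorem, with flaring established via hyperbolic edge automorphisms whose laminations are pairwise independent (the paper's Definition~\ref{ind} makes this precise; restricting to fully irreducible $\phi_e$ as you suggest suffices for the existence statement and simplifies matters). The flaring mechanism you describe---expansion plus ``length-decreasing directions cannot conspire''---is exactly the content of the paper's 3-out-of-4 stretch lemma (Proposition~\ref{3of4}) and its all-but-one corollary (Corollary~\ref{cor-stretch}), proved via the weak attraction theorem and a legality/exponential-growth argument with CT maps and bounded cancellation, just as you anticipate.

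One genuine subtlety you gloss over, however. The paper does not try to choose $\phi_e$ that flare out of the box; it fixes an independent tuple and then passes to high powers $\phi_e^{n_e}$ to force flaring (Theorem~\ref{thm-main}). But replacing $\phi_e$ by $\phi_e^{n_e}$ makes the edge-to-vertex qi constants depend on $n_e$, so your claim that condition~(a) is ``automatic or easy'' fails as stated---the resulting tree of spaces no longer satisfies the qi-embedded condition with uniform constants, and Bestvina--Feighn cannot be applied directly. The paper repairs this by subdividing each edge $e$ of $\GG$ into $n_e$ sub-edges, each carrying a single copy of $\phi_e$ (Lemma~\ref{lem-qiconsts}), which restores a uniform qi bound independent of the $n_e$. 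Flaring is then checked for hallways on this subdivided tree (Proposition~\ref{prop-flare}): the stretch lemma handles hallways centered at the (now sparse) original vertices of $\GG$, ordinary mapping-torus flaring for $\phi_e$ handles hallways lying entirely inside a subdivided edge, and a short overlap argument patches the two together. Without this subdivision step your uniformity bookkeeping would break down.
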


Theorem \ref{main-intro} answers a question due to Min \cite{min},
who proved the analogous theorem for homogeneous graphs of hyperbolic surface groups. The main theorem of this paper (see Theorem \ref{thm-main}) identifies precise conditions under which
the conclusions of Theorem \ref{main-intro} hold.
Min's theorem built on and generalized work of Mosher \cite{Mos-97}, who proved the existence of surface-by-free hyperbolic groups. An analogous theorem, proving the existence of free-by-free hyperbolic groups, is due to Bestvina, Feighn and Handel \cite{BFH-97}. This last theorem from 
\cite{BFH-97} can be recast in the framework of Theorem 
\ref{main-intro} by demanding, in addition, that all edge-to-vertex inclusions for a homogeneous graph of roses are
homotopy equivalences.
Theorem 
\ref{main-intro} generalizes this theorem by relaxing the hypothesis on edge-to-vertex inclusion maps, and allowing them  to be homotopic to finite degree covers. 

Theorem \ref{main-intro} also furnishes new examples of metric bundles in the sense of Mj-Sardar \cite{mahan-sardar}, where all vertex and edge spaces are trees and thus examples to which the results in \cite{mahan-sardar2} applies. A basic question resulting from \cite{min} and the present paper is the following:

\begin{qn}\label{qn-el}
Develop a theory of ending laminations for homogeneous graphs of surfaces
and a similar one for roses.
\end{qn}

A rich theory of ending laminations was developed for Kleinian surface
groups \cite{thurstonnotes} concluding with the celebrated ending lamination
theorem \cite{minsky-elc2}. A theory oriented towards hyperbolic group extensions was developed in \cite{mitra-endlam} and some consequences
derived in \cite{mahan-rafi}. The intent of Question \ref{qn-el} is to ask for an analogous theory in the context of homogeneous graphs of spaces.

\subsection{Regluing}\label{sec-reglue}
We refer to \cite{scott-wall} for generalities on graphs and trees of spaces. A word about the notational convention we shall follow. We shall use $\GG$ to denote the base graph in a graph of spaces, and $G$ to denote a graph whose self-homotopy equivalence classes give $\out$. The vertex (resp.\ edge) set of $\GG$ will be denoted as $V(\GG)$ (resp. $E(\GG)$).

\begin{defn}\cite{BF}(Graphs of hyperbolic  spaces with qi condition)\label{def-tree}
Let $\GG$ be a graph (finite or infinite), and $\XX$  a 
geodesic metric space. Then a triple
$(\XX, \GG, \pi)$
with $\pi:\XX\to \GG$ is called a graph
 of hyperbolic metric spaces
	with qi embedded condition if there exist $\delta\geq 0$, $K\geq 1$  such that: 
	\begin{enumerate}
	\item For all $v\in V(\GG)$, $\XX_v=\pi^{-1}(v)$ is $\delta-$hyperbolic   with respect to the path metric $d_v$, induced from
	$\XX$. Further,  the inclusion maps $\XX_v \to \XX$ are uniformly  proper.
	\item Let $e=[v,w]$ be an edge of $\GG$ joining $v,w\in V(\GG)$. Let $m_e\in \GG$ be the midpoint of $e$.
	Then $\XX_e=\pi^{-1}(m_e)$ is $\delta-$hyperbolic   with respect to the path metric $d_e$, induced from
	$\XX$. The pre-image $\pi^{-1}((v,w))$ is identified
	with $\XX_e \times ((v,w))$.
	\item  The  attaching maps $\psi_{e,v}$ (resp. $\psi_{e,w}$) from $\XX_e\times \{v\}$ (resp.
	$\XX_e\times \{w\}$) are $K$-qi embeddings to $(\XX_v,d_v)$ (resp. $(\XX_w,d_w)$).
	\end{enumerate}
\end{defn}

Throughout this paper, we shall be interested in the following
special cases of graphs of hyperbolic spaces:

\begin{enumerate}
\item $\GG$ is a finite graph, each $X_v, X_e$ is a finite graph,
and each $\psi_e: \XX_e \to \XX_v$ induces an injective map $\psi_{e*}:\pi_1( \XX_e) \to \pi_1(\XX_v)$ at the level of fundamental groups such that $[\pi_1(\XX_v): \psi_{e*}(\pi_1( \XX_e))]$ is finite. We shall call such a graph of spaces a {\it homogeneous graph of roses}.
\item The universal cover of a  homogeneous graph of roses
 yields a tree of spaces
such that all vertex and edge spaces are locally finite trees,
and edge-to-vertex space inclusions are quasi-isometries.
We shall call such a tree of spaces a {\it homogeneous tree of trees}.
\end{enumerate}

Let $\Pi:\YY \to \TT$ be a homogeneous  tree of trees arising as the universal cover of a  homogeneous graph of roses $\pi: \XX \to \GG$.

\begin{defn}\label{defnofhallway}\cite{BF}
	A disk $f : [-m,m]{\times}{I} \rightarrow 
	\YY$ is a \emph{ hallway} of length $2m$ if it satisfies the following conditions:\\
1) $f^{-1} ({\cup}{X_v} : v \in V(\TT)) = \{-m,  \cdots , m \}{\times}
		I$\\
2)  $f$ maps $i{\times}I$ to a geodesic in some  $(X_v,d_v)$.
3) $f$ is transverse, relative to condition (1) to the union $\cup_e X_e$.
\end{defn}

\begin{defn}\label{defnofrhothin}\cite{BF} A hallway $f : [-m,m]{\times}{I} \rightarrow 
	\YY$ is \emph{$\rho$-thin} if 
	$d({f(i,t)},{f({i+1},t)}) \leq \rho$ for all $i, t$.
	
	A hallway $f : [-m,m]{\times}{I} \rightarrow 
	X$ is said to be \emph{ $\lambda$-hyperbolic}  if 
	$$\lambda l(f(\{ 0 \} \times I)) \leq \, {\rm max} \ \{ l(f(\{ -m \} \times I)),
	l(f(\{ m \} \times I)).$$

	The quantity ${\rm min_i} \, \{ l(f(\{ i \} \times I))\}$ is called the \emph{ girth} of the hallway.

	A hallway is \emph{ essential} if the edge path in $T$ 
	resulting from projecting the hallway under $P\circ f$
	onto $T$ does not backtrack (and is therefore a geodesic segment in
	the tree $T$).
\end{defn}

\begin{defn}[Hallways flare condition]\cite{BF}\label{defnofflare} 
	The tree of spaces, $X$, is said to satisfy the \emph{ hallways flare}
	condition if there are numbers $\lambda > 1$ and $m \geq 1$ such that
	for all $\rho$ there is a constant $H:=H(\rho )$ such that  any
	$\rho$-thin essential hallway of length $2m$ and girth at least $H$ is
	$\lambda$-hyperbolic. In general, $\lambda, m$ will be called the constants of the hallways flare condition. 
\end{defn}

We now describe a process of regluing by adapting Min's notion
of graph of surfaces with pseudo-Anosov regluing \cite[p. 450]{min}.\\

\noindent {\bf Hyperbolic Regluing of a homogeneous graph  of roses:}
A homogeneous graph $\pi: \XX \to \GG$ of roses can be subdivided canonically by introducing vertices corresponding to mid-points of edges in $\GG$, so that
each edge in $\GG$ is now subdivided into two edges. Let $\GG(m)$
denote the subdivided graph.
Each
such new vertex is called a {\it mid-edge vertex}. The 
mid-edge vertex corresponding to $[v,w]$ is denoted as $m([v,w])$
and
the corresponding vertex space by $X_{mvw}$.
If the gluing maps corresponding to the new edge-to-vertex 
inclusions are taken to be the identity, then we obtain a new graph of spaces $\pi: \XX \to \GG(m)$ whose total space is homeomorphic to (and hence identified canonically with) $\XX$ and
$\pi$ is the same as before; only the simplicial structure
of $\GG$ has changed to $\GG(m)$. These maps
are called the {\it mid-edge inclusions}. 
\begin{defn}\label{def-reglue}
For each edge $e$ of $\GG(m)$, changing one of the mid-edge inclusions by a map $\phi_e$ representing an automorphism 
$\phi_{e*}$ of $\pi_1(X_e)$ gives a new graph of spaces 
$\pi_{reg}: \XX_{reg} \stackrel{\{\phi_e\}} \longrightarrow \GG$
called a {\it regluing of  $\pi: \XX \to \GG$ }
corresponding to the tuple $\{\phi_e\}$. 

If the universal cover $\til \XX_{reg}$ is hyperbolic,
we say that $\pi_{reg}: \XX_{reg} \stackrel{\{\phi_e\}} \longrightarrow \GG$ is a hyperbolic regluing of $\pi: \XX \to \GG$.
\end{defn}
We denote such a 
regluing by $(\XX_{reg},\GG,\pi, \{\phi_e\} )$. Let 
 $(\til\XX_{reg},\TT,\pi_{reg}, \{\til \phi_e\} )$ denote the universal cover
 of such a regluing.  
Note that the mid-edge inclusions in  $(\til\XX_{reg},\TT,\pi_{reg}, \{\til \phi_e\} )$ corresponding to lifts of the edge $e$ are given by lifts 
$\til \phi_e$ of $\phi_e$, and hence are $K(e)-$quasi-isometries,
where $K(e)$ depends on $\phi_e$.

We shall define an independent family of automorphisms precisely later (Definition \ref{ind}). For now, we say that
two hyperbolic automorphisms $\phi_1, \phi_2$ labeling a pair of edges $e_1, e_2$ incident on a vertex $v$ are independent,
if for the four sets of stable and unstable laminations that 
$\phi_1, \phi_2$ define,
no leaf of  any set is asymptotic to the leaf of another set.
Further, we demand that this condition is satisfied even after translation of laminations by distinct coset representatives of the edge group in the vertex group. A regluing where 
automorphisms labeling any pair of edges $e_1, e_2$ incident on a vertex $v$ are independent is called an independent
regluing.
We can now state the main Theorem of this paper
(see Theorem \ref{thm-main}) which is  a stronger version of Theorem \ref{main-intro}:

\begin{theorem}\label{thm-main-intro} Let $\pi:\XX \to \GG$ be a homogeneous graph of roses, and let
	$\{\phi_e\}, e \in E(\GG)$ be a tuple of hyperbolic automorphisms such that $(\XX_{reg},\GG,\pi, \{\phi_e\} )$ is an independent regluing. 
	Then there exist $k, n \in \natls$
	such that 
	$(\XX_{reg},\GG,\pi, \{\phi_e^{km_e}\} )$
	gives a hyperbolic rotationless regluing
	for all $m_e \geq n$. .
\end{theorem}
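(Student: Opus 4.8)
The strategy is to reduce the theorem to a verification of the Bestvina--Feighn hallways flare condition (Definition~\ref{defnofflare}) for the universal cover $\til\XX_{reg}$, since by the Bestvina--Feighn combination theorem a graph of hyperbolic spaces with the qi-embedded condition whose tree of spaces satisfies hallways flare has hyperbolic fundamental group. The hypotheses already guarantee the qi-embedded condition: each $\til\phi_e$ is a $K(e)$-quasi-isometry of the corresponding edge tree, and vertex/edge spaces are locally finite trees (hence uniformly hyperbolic and properly embedded). So the entire content is to establish uniform exponential flaring of thin essential hallways of some bounded length $2m$ after passing to suitable powers $\phi_e^{km_e}$.

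\textbf{Key steps.} First, I would set up the combinatorial model: a thin essential hallway projects to a geodesic edge-path in $\TT$, and crossing each edge $e$ the hallway is pushed forward by (a lift of) the regluing automorphism $\phi_e^{km_e}$ precomposed with the coset-translation data recording how the edge tree sits inside the two adjacent vertex trees. Thus the growth of the girth function along the hallway is governed by a composition of the maps $f \mapsto \phi_e^{km_e}(f)$ measured in the vertex-group geodesic metric. Second, I would invoke the standard bounded-cancellation / flaring behaviour of a single hyperbolic automorphism on $F_n$: a hyperbolic (atoroidal) automorphism acting on a Cayley graph of a free group satisfies a flaring inequality in the sense that, on sufficiently long words avoiding close approach to the unstable lamination, high powers expand length by a definite factor, and symmetrically for the inverse on the stable side. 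This is the free-group analogue of the pseudo-Anosov flaring Min uses; it ultimately rests on the train-track technology of Bestvina--Feighn--Handel. Third --- and this is where independence enters --- one must rule out the danger that, although each individual step flares, a hallway could ``turn'' at a vertex from a direction nearly parallel to an unstable leaf of $\phi_{e_1}$ into a direction nearly parallel to a stable leaf of $\phi_{e_2}$, thereby cancelling the expansion. Independence of the four laminations attached to any two edges incident at a vertex (and its coset-translates, exactly as demanded in the statement) forces a uniform lower bound on the ``angle'' between these laminations, hence a uniform bounded-cancellation estimate for the transition; this is what lets the per-edge expansion factors multiply rather than cancel. Finally, I would choose $k$ so that $\phi_e^k$ is rotationless (using that some power of any outer automorphism is rotationless, and this property is preserved under further powers), and then choose $m$ (the hallway length) and $n$ (the exponent threshold) large enough that: (a) each single crossing with exponent $\geq kn$ dominates the fixed bounded-cancellation loss at the two endpoints of that edge, and (b) a hallway of length $2m$ necessarily contains enough ``good'' crossings --- crossings where the relevant lamination is not approached --- to yield net expansion by $\lambda>1$. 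A pigeonhole/concatenation argument over the finitely many edge types of $\GG$ produces uniform $\lambda$ and $m$.

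\textbf{Main obstacle.} The crux is the third step: controlling the interaction of the stable/unstable laminations of the distinct automorphisms sitting at a common vertex, uniformly over the infinitely many coset-translates by which the edge tree is embedded in the vertex tree. Packaging ``no leaf asymptotic to another leaf'' into a quantitative, translation-uniform bounded-cancellation estimate --- strong enough that it survives composition along an arbitrarily long hallway --- is the technical heart, and is precisely the point where the covering-degree generality (edge groups being finite-index, not equal, to vertex groups) complicates Min's surface argument, since one cannot simply work with a single fixed lamination on each piece but must track how laminations transport across the finite-index inclusions. Once this uniform transition estimate is in hand, assembling it with the single-automorphism flaring into a global hallways-flare constant is a bookkeeping argument of the type carried out in \cite{Mos-97} and \cite{min}.
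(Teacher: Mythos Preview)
Your overall architecture is correct and matches the paper: reduce to Bestvina--Feighn, pass to rotationless powers, and use independence at vertices to prevent cancellation between the flaring contributed by adjacent edges. Two points of implementation in the paper differ from your sketch and are worth knowing.

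First, the paper does not attempt a pigeonhole count of ``good'' versus ``bad'' crossings along a general hallway. Instead it \emph{subdivides} each edge $e$ of $\GG$ into $n_e$ sub-edges, each labeled by a single $\phi_e$ (Lemma~\ref{lem-qiconsts}). This has two effects: the qi constants of the tree of spaces become uniform (independent of the exponents $n_e$), and any hallway of the relevant length now contains at most one \emph{original} vertex of $\GG$. Hallways that see only subdivision vertices flare directly from the hyperbolicity of a single $\phi_e$; hallways centered near one original vertex are handled by the lemma below; and a short concatenation argument (Equations~\ref{eq-flare}--\ref{eq-flare3}) covers the remaining cases. Your plan to let ``per-edge expansion factors multiply'' along an unsubdivided hallway would have to confront the fact that the qi constants blow up with $n_e$, which is exactly what the subdivision sidesteps.

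Second, the quantitative content you call a ``uniform angle / bounded-cancellation estimate for the transition'' is realized in the paper as a \emph{3-out-of-4 stretch} lemma (Proposition~\ref{3of4}, Corollary~\ref{cor-stretch}): for any long segment $\til\tau$ in a vertex tree and any two incident edges $e_1,e_2$, at least three of the four lengths $|\til f_{i\#}^{\pm m}(\til\tau_i)|$ exceed $2|\til\tau|$. The proof is by contradiction using the weak-attraction machinery of Sections~\ref{sec-out}--\ref{sec-li}: a hypothetical sequence violating the conclusion limits to a line whose realization cannot lie in the repelling neighbourhoods of both $\phi_1$ and $\phi_2$ (Lemma~\ref{dne}, which is where independence is consumed), hence is weakly attracted to some $\Lambda^+$, hence eventually has positive legality and exponential growth (Proposition~\ref{legality}, Lemma~\ref{expgrowth}). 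This replaces the Teichm\"uller-geometric ``angle'' estimate from the surface case; there is no direct angle to speak of, and the coset-translation uniformity you flag as the main obstacle is handled by the finiteness of coset representatives inside the disjoint-neighbourhoods Lemma~\ref{dne}.
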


\section{Preliminaries on $\out$}\label{sec-out}
In this section we give the reader a short review of the definitions and some important results in $\out$ that are relevant to this paper. For details, see 
\cite{BFH-00}, \cite{FH-11}, \cite{HM-09}, \cite{HM-20}.  We  fix a hyperbolic $\phi\in\out$ for the purposes of this section. 

 A \textit{marked graph} is a finite graph $G$ which has no valence 1 vertices and is equipped with 
 a homotopy equivalence, called a {\it marking}, to the rose $R_n$
 given by $\rho: G\to R_n$ (where $n = \text{rank}(\F)$). The homotopy inverse of the marking is denoted by the map $\overline{\rho}: R_n \to G$. A \textit{circuit} in a marked graph is an immersion (i.e. a locally injective  continuous map) of $S^1$ into $G$. $I$ will denote an interval in $\R$ that is closed as a subset.
 A \emph{path} is a locally injective, continuous map $\alpha: I \to G$, 
  such that any lift $\til{\alpha}: I \to \til{G}$ is proper.
  When $I$ is compact, any continuous map from $I$ can be homotoped relative to  its endpoints by a process called \textit{tightening}  to a unique path (up to reparametrization)  with domain $I$. 
  If $I$ is noncompact then each lift $\til{\alpha}$  induces an injection from the ends of $I$ to the ends of $\til{G}$. In this case
  there is a unique path $[\alpha]$ which is homotopic to $\alpha$ such that both $[\alpha]$ and $\alpha$ have lifts to $\widetilde{G}$ with the 
  same finite endpoints and the same infinite ends. If $I$ has two infinite ends then $\alpha$ is called a \textit{line} in $G$ otherwise if $I$ has only one infinite end then $\alpha$ is called a \emph{ray}.
     Given a homotopy equivalence of marked graphs $f: G\to G'$,  $f_\#(\alpha)$ denotes the tightened image $[f(\alpha)]$ in $G'$. Similarly we define $\til{f}_\#(\til{\alpha})$ by lifting to the universal cover. 
 
 A \textit{topological representative} of $\phi$ is a homotopy equivalence $f:G\rightarrow G$ such that $\rho: G \rightarrow R_n$ is a marked graph, 
$f$ takes vertices to vertices and edges to edge-paths and the map $\rho\circ f \circ \overline{\rho}: R_n \rightarrow R_n$ induces the outer automorphism $\phi$ at the level of fundamental groups. A nontrivial path $\gamma$ in $G$ 
is a \textit{periodic Nielsen path} if there exists a $k$ such that $f^k_\#(\gamma)=\gamma$;
the minimal such $k$ is called the period. If $k=1$, we simply
call such a path \textit{Nielsen path}. A periodic Nielsen path is \textit{indivisible} if it cannot 
be written as a concatenation of two or more nontrivial periodic Nielsen paths.

\noindent \textbf{Filtrations and legal paths: } Given a subgraph $H\subset G$ let $G\setminus H$ denote the union of edges in $G$ that are not in $H$.
A \textit{filtration} of $G$ is a strictly increasing sequence of subgraphs 
 $G_0 \subset G_1 \subset \cdots \subset G_n = G$, each with no isolated vertices. 
 The individual terms $G_k$ are called \textit{filtration elements}, and if $G_k$ is a core graph (i.e. a graph without valence 1 vertices)
 then it is called a 
 \textit{core filtration element}. The subgraph $H_k = G_k \setminus G_{k-1}$ together with the vertices which occur as endpoints of edges in 
 $H_k$ is called the \textit{stratum of height $k$}.
 The \textit{height}\index{height} of a subset of $G$ is the minimum $k$ such that the subset is contained in $G_k$. 
 A \textit{connecting path} of a stratum $H_k$ is a nontrivial finite path $\gamma$ of height $< k$ whose endpoints are contained in 
 $H_k$.

Given a topological representative $f:G\rightarrow G$, one can define a map $T_f$ by setting $T_f(E)$ 
to be the first edge of the edge path  $f(E)$. We say $T_f(E)$ is the \emph{direction} of $f(E)$. If $E_1, E_2$ are two edges in $G$ with the same initial vertex, then the unordered pair $(E_1, E_2)$ is called a \textit{turn} in $G$. 
Define $T_f(E_1,E_2) = (T_f(E_1),T_f(E_2))$. 
So $T_f$ is a map that takes turns to turns. We say that a 
nondegenerate turn (i.e. $E_1\neq E_2$) is \textit{illegal} if for some $k>0$ the turn $T^k_f(E_1, E_2)$ becomes degenerate (i.e. $T^k_f(E_1) = T^k_f(E_2)$); otherwise the
 turn is \textit{legal}. A path is said to be a \textit{legal path} if it contains only legal turns. A path is $r-legal$ if it is of height $r$ and all its illegal turns are in $G_{r-1}$.
 We say that $f$ \textit{respects} the filtration or 
that the filtration is \textit{$f$-invariant} if $f(G_k) \subset G_k$ for all $k$.

\noindent \textbf{Weak topology:}
We define an equivalence relation on the set of all circuits and paths in $G$ by saying that two elements are equivalent if and only if they differ by some orientation preserving homeomorphism of their respective domains.
Let $\widehat{\mathcal{B}}(G)$, called the \emph{space of paths}, denote the  space of equivalence classes of circuits and paths in $G$, whose endpoints (if any) are vertices of $G$ . We give this space the \textit{weak topology}:
 for each finite path $\alpha$ in $G$,  the basic open set $\widehat{N}(G,\alpha)$  consists of all paths and circuits in $\widehat{\mathcal{B}}(G)$ which have $\alpha$ as a subpath. Then
 $\widehat{\mathcal{B}}(G)$ is compact in the weak topology.
Let $\mathcal{B}(G)\subset \widehat{\mathcal{B}}(G)$ be the compact subspace of all lines in $G$ with the induced topology: $\mathcal{B}(G)$ is called {\emph{space of lines}} of $G$.
One can give an equivalent description of $\mathcal{B}(G)$ following \cite{BFH-00}. 
A line is completely determined, up to reversal of direction, by two distinct points in $\partial \mathbb{F}$. 
Let $\widetilde{\mathcal{B}}=\{ \partial \mathbb{F} \times \partial \mathbb{F} - \Delta \}/(\mathbb{Z}_2)$, where $\Delta$ is the diagonal and $\mathbb{Z}_2$ acts by the flip. Equip 
$\widetilde{\mathcal{B}}$ with the  topology induced from the standard Cantor set topology on $\partial \mathbb{F}$. Then $\mathbb{F}$ acts on $\widetilde{\mathcal{B}}$ with a compact but non-Hausdorff quotient space $\mathcal{B}=\widetilde{\mathcal{B}}/\mathbb{F}$.
The quotient topology is also called the \textit{weak topology}
and it coincides with the topology defined in the previous paragraph. Elements of $\mathcal{B}$ are called \textit{lines}. A lift of a line $\gamma \in \mathcal{B}$ is an element  $\widetilde{\gamma}\in \widetilde{\mathcal{B}}$ that
projects to $\gamma$ under the quotient map and the two elements of $\partial\widetilde{\gamma}$ are called its \emph{endpoints} or simply \emph{ends}.
For any circuit $\alpha$, we take its ``infinite-fold concatenation" $\cdots\alpha.\alpha.\alpha\cdots$ and view it as a line. With this understanding,  we can talk of a circuit belonging to an open set $V\subset \mathcal{B}$.

An element $\gamma\in\mathcal{B}$ is said to be \textit{weakly attracted} to $\beta\in\mathcal{B}$ under the action of $\phi\in\out$, if some subsequence of $\{\phi^k(\gamma)\}_k$ converges  to $\beta$ in the weak topology as $k\to\infty$.
Similarly, if we have a homotopy equivalence $f:G\rightarrow G$,  a line(path) $\gamma\in\widehat{\mathcal{B}}(G)$ is said to be \textit{weakly attracted} to a line(path) $\beta\in\widehat{\mathcal{B}}(G)$ under the action of $f_{\#}$, 
if (some subsequence of) $\{f_{\#}^k(\gamma)\}_k$ converges to $\beta$
in the weak topology as $k\to\infty$. Note that since the space of paths and circuits is non-Hausdorff, a sequence can converge to multiple points in the space and any such point will be called a weak limit of the sequence.

The {\emph{accumulation set}} of a ray $\alpha$ in $G$ is the set of lines  $\ell\in \mathcal{B}$ which are elements of the weak closure of $\alpha$. This is equivalent to  saying that every finite subpath of $\ell$ occurs infinitely many times as a subpath of $\alpha$. Two rays are \emph{asymptotic} if they have equal subrays. This gives an equivalence relation on the set of all rays and two rays in the same equivalence class have the same closure. 
The weak accumulation set of some $\xi\in\partial\mathbb{F}/\F$ is the set of lines in the weak closure
of any  ray having end $\xi$. We call this the {\textit{weak closure}} of $\xi$.
    
\noindent  \textbf{Subgroup systems:}
Define a \textit{subgroup system} $\mathcal{A} = \{[H_1], [H_2], .... ,[H_k]\}$ to be a finite collection of distinct conjugacy classes of finite rank, nontrivial subgroups $H_i<\mathbb{F}$.
A subgroup system  is said to be a \emph{free factor system} if $\mathbb{F}$ has a free factor decomposition $\F = A_1 * A_ 2 * \cdots * A_k * B$, where $[H_i] = [A_i]$ for all $i$.  
A subgroup system $\mathcal{A}$ carries a conjugacy class $[c]\in \mathbb{F}$ if there exists some $[A]\in\mathcal{A}$ such that $c\in A$. Also, we say that $\mathcal{A}$ carries a line $\gamma$ if one of the following equivalent conditions hold:
\begin{itemize}
\item  $\gamma$ is the weak limit of a sequence of conjugacy classes carried by $\mathcal{A}$.
\item There exists some $[A]\in \mathcal{A}$ and a lift $\widetilde{\gamma}$ of $\gamma$ so that the endpoints of $\widetilde{\gamma}$ are in $\partial A$.

\end{itemize}

The {\emph{free factor support}} of a line $\ell$ in a marked graph $G$ is the conjugacy class of the minimal (with respect to inclusion) free factor of $\pi_1(G)$ which carries $\ell$. The existence of such a free factor is due to \cite[Corollary 2.6.5]{BFH-00}.
Let $\ell$ be any line in $G$.  Let the free factor support of $\ell$ be $[K]$. If $\mathcal{F}$ is any free factor system that carries $\ell$, then the minimality of $[K]$ ensures that there exist some $[A]\in \mathcal{F}$ such that $K < A$. In this case we say that the \emph{free factor support of }$\ell$ \emph{is carried by} $\mathcal{F}$.\\

\noindent\textbf{Attracting Laminations:}
For any marked graph $G$, the natural identification $\mathcal{B}\approx \mathcal{B}(G)$ induces a bijection between the closed subsets of $\mathcal{B}$ and the closed subsets of $\mathcal{B}(G)$. A closed
subset in either  case is called a \textit{lamination},
and is denoted by $\Lambda$. Given a lamination $\Lambda\subset \mathcal{B}$ we look at the corresponding lamination in $\mathcal{B}(G)$ as the
realization of $\Lambda$ in $G$. An element $\lambda\in \Lambda$ is called a \textit{leaf} of the lamination.
A lamination $\Lambda$ is called an \textit{attracting lamination} for a rotationless $\phi$ if it is the weak closure of a line $\ell$  such that 
\begin{enumerate}
	\item  $\ell$ is a birecurrent leaf of $\Lambda$.
	\item $\ell$ has an \textit{attracting neighborhood} $V$
	in the weak topology,  \emph{i.e.} $\phi(V)\subset V$; every line in $V$ is weakly attracted to $\ell$ under iteration by $\phi$; and $\{\phi^k(V)\mid k\geq 1\}$ is a neighborhood basis of $\ell$.
	\item no lift $\widetilde{\ell}\in \mathcal{B}$ of $\ell$ is the axis of a generator of a rank 1 free factor of $\mathbb{F}$ .
\end{enumerate}
Such an $\ell$ is called a {\emph{generic leaf}} of $\Lambda$. An attracting lamination of $\phi^{-1}$ is called a \emph{repelling lamination} of $\phi$.
The set of all attracting and repelling laminations of $\phi$ are denoted by  $\mathcal{L}^+_\phi$ and $\mathcal{L}^-_\phi$ respectively. \\

\noindent
\textbf{Attracting fixed points and principal lifts:}
The action of $\Phi\in\aut$ on $\F$ extends to the boundary and is denoted by $\widehat{\Phi}:\partial \mathbb{F}\rightarrow\partial \mathbb{F}$. Let Fix$(\widehat{\Phi})$ denote the set of fixed points of this action. We call an element $\xi$ of Fix$(\widehat{\Phi})$ an {\emph{attracting fixed point}} if there exists an open neighborhood $U\subset \partial \mathbb{F}$ of $\xi$ such that  $\widehat{\Phi}(U)\subset U$,
and for any point $Q\in U$ the sequence $\widehat{\Phi}^n(Q)$ converges to $\xi$. Let Fix$_+(\widehat{\Phi})$ denote the set of attracting fixed points of Fix$(\widehat{\Phi})$. Similarly let Fix$_-(\widehat{\Phi})$ denote the attracting fixed points of Fix$(\widehat{\Phi}^{-1})$. 
A lift $\Phi\in\aut$ is said to be \emph{principal} if Fix$_+(\widehat{\Phi})$  either has at least three points, or has two points which are not the endpoints of a lift of some generic leaf of an attracting
lamination belonging to  $\mathcal{L}^+_\phi$. The latter case appears only when we are dealing with reducible hyperbolic automorphisms which have superlinear \emph{NEG} edges (see below). It is not something that is present in the context of
mapping class groups. See \cite[Section 3.2]{FH-11} for more details. 
Set $\text{Fix}^+(\phi) = \bigcup\limits_{\Phi\in P(\phi)} \text{Fix}_+ (\widehat{\Phi})$, where $P(\phi)$  is the set of all principal lifts of $\phi$. 
 We define
 $\mathcal{B}_{\mathsf{Fix}^+}(\phi): = \bigcup_{\Phi\in P(\phi)} \{\ell\in\mathcal{B} \ \arrowvert \ \partial\til{\ell}\in\mathsf{Fix}_+(\widehat{\Phi})\}$. For a principal lift $\Phi$, the map $\widehat{\Phi}$ may have periodic points and we may miss out on some attracting fixed points. This is why we need to move to rotationless powers,
 where every periodic point of $\widehat{\Phi}$ becomes a fixed point (see \cite[Definition 3.13]{FH-11} for further details). A hyperbolic outer automorphism $\phi$ is said to be {\emph{rotationless}} if for every $\Phi\in P(\phi)$ and any $k\geq 1$, all attracting fixed points of $\widehat{\Phi}^k$ are attracting fixed points of $\widehat{\Phi}$ and the map $\Phi\to\Phi^k$ induces a bijection between $P(\phi)$ and  $P(\phi^k)$.
\begin{lemma}\cite[Lemma 4.43]{FH-11}
\label{rotationless}
 There exists a $K$ depending only upon the rank of the free group $\mathbb{F}$ such that for every $\phi\in \out$ , $\phi^K$ is rotationless.
\end{lemma}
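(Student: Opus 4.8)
The plan is to locate every source of periodicity that obstructs rotationlessness inside the combinatorics of a relative train track representative, to observe that each such source is a finite set whose size is bounded purely in terms of $n := \mathrm{rank}(\F)$, and then to take $K$ to be the product of the orders of the resulting finite permutations. The only nontrivial inputs are the existence of relative train track representatives (Bestvina--Handel) and their structure theory; no use is made of completely split train tracks, which would be circular here.

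First I would fix $\phi\in\out$ and choose a relative train track representative $f\colon G\to G$ of $\phi$. After subdividing only where the relative train track structure forces it and collapsing all remaining valence-two vertices, the graph $G$ has at most $2n-2$ vertices, $3n-3$ edges, $6n-6$ directions, and boundedly many filtration strata, all bounds being explicit functions of $n$. (If this trimming turns out to be awkward to carry out while preserving the relative train track property, one can instead bound the relevant finite sets directly on $\partial\F$, via the index inequality for $\mathrm{Fix}(\Phi)$ of a principal automorphism.) Next I would kill the periodic data one type at a time, replacing $f$ by a bounded power at each stage: the direction map $Tf$ permutes the $\le 6n-6$ periodic directions, so some $f^{K_1}$ with $K_1\le(6n-6)!$ fixes all of them; the indivisible periodic Nielsen paths of a relative train track map are sharply constrained (essentially at most one per exponentially growing stratum, with controlled behaviour along the non-exponentially-growing strata), hence their number is bounded in $n$ and a further bounded power of $f$ turns every periodic Nielsen path into an honest Nielsen path; and the principal, hence in particular the periodic, vertices form a subset of $V(G)$ of size $\le 2n-2$, so yet another bounded power fixes each of them. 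Taking $K$ to be the product of these finitely many bounded factors gives a constant depending only on $n$ such that the representative $g:=f^K$ of $\phi^K$ fixes every periodic vertex and direction and fixes --- not merely permutes --- every periodic Nielsen path.

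Finally I would verify that $\phi^K$ meets the definition of rotationless for this $K$. For any principal lift $\Phi$ of $\phi^K$, every point of $\mathrm{Fix}_+(\widehat\Phi)$ is the endpoint of an eigenray of $g$ issuing from a vertex of $\mathrm{Fix}(\Phi)$ along a fixed direction of attracting type, and these rays, hence their endpoints, are unchanged by passing to further powers, so $\mathrm{Fix}_+(\widehat\Phi^k)=\mathrm{Fix}_+(\widehat\Phi)$ for all $k\ge1$; moreover any periodic point of $\widehat\Phi$ --- being an endpoint of a periodic Nielsen ray or of an eigenray --- is now genuinely fixed. And the assignment $\Phi\mapsto\Phi^k$ is a bijection between $P(\phi^K)$ and $P(\phi^{kK})$ because principal lifts are parametrized by the principal vertices together with their lifts to $\widetilde G$ (see \cite[Section 3.2]{FH-11}), data on which $g$ acts trivially. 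Combining these facts yields that $\phi^K$ is rotationless.

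The step I expect to be the main obstacle is the bound on the periodic Nielsen paths and on the number of principal lifts: one must show these finite sets have size bounded purely in terms of the rank, uniformly over all $\phi$ and over the choice of representative, and it is here that the structure theory of relative train tracks --- rather than elementary counting --- does the work. The trimming of valence-two vertices in the first step is routine but genuinely needs care, which is why I would keep the $\partial\F$-side index argument in reserve as an alternative route to the same finiteness bounds.
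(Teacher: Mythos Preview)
The paper does not prove this lemma at all: it is quoted verbatim as \cite[Lemma 4.43]{FH-11} and used as a black box, so there is no proof in the paper to compare your proposal against.

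That said, your sketch is broadly in the spirit of the actual argument in \cite{FH-11}, which also reduces to bounding finite combinatorial data attached to a relative train track representative. A few points where your outline is looser than the real proof: the correspondence you invoke at the end, that ``principal lifts are parametrized by the principal vertices together with their lifts to $\widetilde G$'', is not a parametrization on the nose but rather goes through Nielsen classes of principal points and requires the machinery of \cite[Section 3]{FH-11} to set up carefully; and the claim that every point of $\mathrm{Fix}_+(\widehat{\Phi})$ is the endpoint of an eigenray from a fixed vertex along a fixed direction is itself one of the substantive results of that section, not something available for free once directions are fixed. Your own caveat about the Nielsen-path bound is well placed: in \cite{FH-11} this is handled not by counting indivisible Nielsen paths per stratum directly but via the structure of principal points and the finiteness of Nielsen classes, and the uniformity in $n$ comes from index-type inequalities rather than from naive edge counts. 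So your plan is the right shape, but the steps you flag as ``the main obstacle'' are in fact where essentially all the content of the proof lives.
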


\noindent\textbf{EG strata, NEG strata and Zero strata: }
Given an $f$-invariant filtration, for each stratum $H_k$ with edges $\{E_1,\ldots,E_m\}$, define
the \textit{transition matrix} of $H_k$ to be the square matrix whose $j^{\text{th}}$ column records the number of times 
$f(E_j)$ crosses the  edges
$\{E_1,\ldots,E_m\}$. If $M_k$ is the zero matrix then we say that $H_k$ is a \textit{zero stratum}. 
If $M_k$ irreducible --- meaning that for each $i,j$ there exists $p$ such that the $i,j$ entry of the $p^{\text{th}}$ 
power of the matrix is nonzero --- then we say that $H_k$ is \textit{irreducible}; and if one can furthermore choose $p$ independently 
of $i,j$ then we say that
$H_k$ is \textit{aperiodic}. Assuming that $H_k$ is irreducible, the Perron-Frobenius theorem gives the following: the matrix $M_k$ 
has a unique maximal eigenvalue $\lambda \ge 1$, called the \textit{Perron-Frobenius eigenvalue}, for which some associated eigenvector has 
positive entries: if $\lambda>1$ then we say that $H_k$ is an {\emph{exponentially growing}} or EG stratum; whereas if $\lambda=1$ 
then $H_k$ is a \emph{nonexponentially growing} or NEG stratum. 
If the lengths of the edges in a NEG stratum grow linearly under iteration by $f$ we say that the stratum has \emph{linear} growth. 
An NEG stratum that is neither fixed nor has linear growth is called  {\emph{superlinear}}. It is worth noting here that there are no linearly growing strata for hyperbolic outer automorphisms. 

An important result from \cite[Section 3]{BFH-00}  is that there is a bijection between exponentially growing strata and
attracting laminations, which implies that there are only finitely many elements in $\mathcal{L}^+_\phi$. The set 
 $\mathcal{L}^+_\phi$ is invariant under the action of $\phi$. 
 When it is nonempty, $\phi$ can permute the elements of $\mathcal{L}^+_\phi$ if $\phi$ is not rotationless. 
 For rotationless $\phi$, it is known that $\mathcal{L}^+_\phi$ is a fixed set \cite{FH-11}. 
 
\noindent
\textbf{Dual lamination pairs:}
Let  $\Lambda^+_\phi$ be an attracting lamination of $\phi$ and $\Lambda^-_\phi$ be an attracting lamination of $\phi^{-1}$. We say that this lamination pair is a \textit{dual lamination pair} if the free factor support of some (any) generic  leaf of $\Lambda^+_\phi$ is also the free factor support of some (any) generic leaf of $\Lambda^-_\phi$. 
By \cite[Lemma 3.2.4]{BFH-00}, there is
a bijection between $\mathcal{L}^+_\phi$ and $\mathcal{L}^-_\phi$ induced by this duality relation.
We denote a dual lamination pair $\Lambda^+_\phi, \Lambda^-_\phi$ of $\phi$ by $\Lambda^\pm_\phi$.

\noindent
 \textbf{Relative train track map:} Given a topological representative $f:G\rightarrow G$ with a 
 filtration $G_0\subset G_1\subset \cdot\cdot\cdot\subset G_n$ which is preserved by $f$, we say that $f$ is a  relative 
 train track map if the following conditions are satisfied for every EG stratum $H_r$:
 \begin{enumerate}
  \item $f$ maps r-legal paths to r-legal paths.
  \item If $\gamma$ is a nontrivial path in $G_{r-1}$ with its endpoints in $H_r$ then $f_\#(\gamma)$ has its end points in $H_r$.
  \item If $E$ is an edge in $H_r$ then $Tf(E)$ is an edge in $H_r$
 \end{enumerate}

 Suppose $\phi$ is hyperbolic and rotationless and $f:G\to G$ is a relative train-track map for $\phi$. Two periodic vertices are Nielsen equivalent if they are endpoints of some periodic Nielsen path in $G$. A periodic vertex $v$ 
 is a \emph{principal vertex} if  $v$ does not satisfy the condition that it is the only periodic vertex in its Nielsen equivalence class and that there are exactly two periodic directions at $v$, both of which are in the same $EG$ stratum. A {principal direction} in $G$ is a non-fixed, oriented edge $E$ whose initial vertex is principal and initial direction is fixed under iteration by $f$.\\

\noindent
 \textbf{Splittings:}  \cite{FH-11} Given a relative train track map $f:G\rightarrow G$, a splitting of a line, 
 path or a circuit $\gamma$ is a decomposition of $\gamma$ into subpaths $\cdots\gamma_0\gamma_1 \cdots\gamma_k\cdots  $ 
 such that for all $i\geq 1$, $f^i_\#(\gamma) =  \cdots f^i_\#(\gamma_0)f^i_\#(\gamma_1)\cdots f^i_\#(\gamma_k)\cdots$.
 The terms $\gamma_i$ are called the \textit{terms} of the splitting or \emph{splitting components} of $\gamma$.

  A \textit{CT map} or a \textbf{completely split relative train track map}
  is a  topological representative  with particularly nice properties. But CTs do not exist for all outer automorphisms. However, rotationless outer automorphisms are guaranteed to have a CT representative:

 \begin{lemma}\label{CT} \cite[Theorem 4.28]{FH-11}
  For each rotationless, hyperbolic $\phi\in \out$, there exists a CT map $f:G\rightarrow G$ such that $f$ is a relative train-track representative for $\phi$ and has the following properties:

 \begin{enumerate}
  \item \textbf{(Principal vertices)} Each principal vertex is fixed by $f$ and each periodic direction at a principal vertex is fixed by $T_f$. 
  Each vertex which has a link in two distinct irreducible strata is principal and a turn based at such a vertex with edges in the two distinct stratum is legal.
  \item \textbf{(Nielsen paths)} The endpoints of all indivisible Nielsen paths are principal vertices. 
  \item \textbf{(Zero strata)} Each zero stratum $H_i$ is contractible and there exists an EG stratum
  $H_s \text{ for some } s>i$ (see \cite[Definition 2.18]{FH-11}) such that each vertex of $H_i$ is contained in $H_s$ and the
  link of each vertex in $H_i$ is contained in $H_i \cup H_s$.
  \item \textbf{(Superlinear NEG stratum)} \cite[Lemma 4.21]{FH-11} Each non-fixed NEG stratum $H_i$ is a single oriented edge $E_i$ and has a splitting $f_\#(E_i) = E_i\cdotp u_i$, where $u_i$ is a nontrivial 
  circuit which is not a Nielsen path.
 \end{enumerate}
 \end{lemma}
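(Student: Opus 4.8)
The statement is \cite[Theorem 4.28]{FH-11}, so strictly speaking nothing new is required; I indicate the strategy one follows to establish it. By Lemma \ref{rotationless} we may first pass to a rotationless power, so it suffices to produce a CT for a rotationless, hyperbolic $\phi$. The starting point is the improved relative train track theory of \cite{BFH-00}: $\phi$ admits a relative train track representative $f_0\colon G_0\to G_0$ with an $f_0$-invariant filtration whose EG strata satisfy the three relative train track conditions stated just before Lemma \ref{CT}. The CT is obtained by modifying $f_0$ through a finite sequence of elementary moves --- subdividing edges at preimages of vertices, sliding a vertex along an edge of a lower stratum, folding a maximal common initial segment of two edge-images, and collapsing an $f$-invariant forest --- until the resulting representative $f\colon G\to G$ is \emph{completely split}, i.e.\ $f_\#(E)$ is a completely split path for every edge $E$ in an irreducible stratum, and $f_\#(\sigma)$ is completely split for every connecting path $\sigma$ in a zero stratum that is ``taken'' by some iterate of a higher edge, and in addition the filtration, the vertices and the Nielsen paths satisfy the remaining defining clauses of a CT. Properties (1)--(4) are then read off directly from that definition; since $\phi$ is hyperbolic there are no linear strata, so every non-fixed NEG stratum is superlinear, which is exactly the case addressed in (4).

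In more detail, one works roughly up the filtration. First arrange the coarse data: make the filtration reduced, and make each non-fixed NEG stratum a single oriented edge $E_i$; since the transition matrix of $H_i$ has Perron--Frobenius eigenvalue $1$, $f_\#(E_i)$ crosses $E_i$ exactly once and admits a splitting $f_\#(E_i)=E_i\cdot u_i$ with $u_i$ a circuit of strictly lower height, and hyperbolicity forbids $u_i$ from being a Nielsen path (else $[u_i]$ would be a periodic conjugacy class), which is (4) --- this is essentially \cite[Lemma 4.21]{FH-11}. Next fix the zero strata: using the moves above one arranges that each zero stratum $H_i$ is contractible and enveloped by a single EG stratum $H_s$, $s>i$, in the sense that every vertex of $H_i$ lies in $H_s$ and has link contained in $H_i\cup H_s$, which is (3); this step is the one that most constrains the processing order, since altering a zero stratum changes connecting paths and hence the splitting structure of the EG stratum above it. Then one makes every periodic vertex principal and $f$-fixed, slides the endpoints of all indivisible Nielsen paths onto principal vertices, and arranges that every periodic direction at a principal vertex is $T_f$-fixed; here rotationlessness is essential, as it guarantees that periodic points, periodic directions and periodic Nielsen paths are genuinely fixed, and it yields (1) and (2). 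Finally, with all strata in good position, an induction up the filtration shows that $f_\#(E)$ is completely split for every edge $E$, using the relative train track conditions to control EG edges and the splitting $f_\#(E_i)=E_i u_i$ together with the zero-stratum structure to control NEG and zero edges.

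The main obstacle is proving that this sequence of moves terminates. Each move may create new vertices, new illegal turns, or new violations of a clause established earlier, so one equips the set of candidate representatives with a complexity --- a lexicographically ordered tuple recording, in order of priority, the number of EG strata, the number of edges, the number of illegal turns in EG strata, the total length of certain critical subpaths, and so on --- and verifies that each move strictly decreases this complexity while preserving the structural gains already made (reduced filtration, single NEG edges, enveloped zero strata, principal Nielsen endpoints). The delicate point is the interaction between moves: for instance, that folding out an illegal turn does not create a new zero stratum violating (3), and that sliding a Nielsen-path endpoint does not destroy complete splitting of any edge-image. This bookkeeping is precisely where \cite{FH-11} does the heavy lifting; once termination is in hand, the terminal representative is a CT, and in particular it has properties (1)--(4), as claimed.
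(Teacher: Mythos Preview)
The paper does not prove this lemma at all: it simply quotes the statement from \cite[Theorem 4.28]{FH-11} (together with \cite[Lemma 4.21]{FH-11} for item (4)) and uses it as a black box. Your opening sentence already acknowledges this, and the sketch you give is a reasonable summary of the Feighn--Handel strategy; but since the paper's ``proof'' consists solely of the citation, there is nothing further to compare.
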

 
 For any $\pi_1$-injective map $f: G_1\to G_2$ between graphs, there exists a constant $BCC(f)$, called the {\emph{bounded cancellation constant}} for $f$, such that for any lift $\til{f}: \til{G}_1\to \til{G}_2$ to the universal covers and any path $\til{\gamma}$ in $\til{G}_1$, the path $\til{f}_\#(\til{\gamma})$ is contained in a $BCC(f)$ neighbourhood of $\til{f}(\til{\gamma})$ (see \cite{Co-87} and   \cite[Lemma 3.1]{BFH-97}).

 \begin{definition}[label=critical]
   Let $f:G\to G$ be a CT map for  
  $\phi\in\out$, with $H_r$ being an exponentially growing stratum with associated Perron-Frobenius eigenvalue $\lambda$. 
  If $BCC(f)$ denotes the bounded cancellation constant for $f$, then the number $\frac{2BCC(f)}{\lambda-1}$ is called 
  the \emph{critical constant} for $H_r$. 
  
  \end{definition}
  It can be easily seen that for every number $C>0$ that exceeds the 
  critical constant, there is some $1\geq\mu>0$ such that if $\alpha\beta\gamma$ is a concatenation of $r-$legal paths where 
   $\beta $ is some $r-$legal segment of length $\geq C$, then the $r-$legal leaf segment of 
   $f^k_\#(\alpha\beta\gamma)$ corresponding to $\beta$ has length $\geq \mu\lambda^k{|\beta|}_{H_r}$ (see \cite[pp 219]{BFH-97}).
  To summarize, if we have a path in $G$ which has some $r-$legal ``central''  subsegment of length greater than the
    critical constant, then this segment is protected by the bounded cancellation lemma and under iteration, the
    length of this segment grows exponentially.


\noindent
\textbf{Nonattracting subgroup system:}
For any hyperbolic $\phi$, the \textit{non-attracting subgroup system} of an attracting lamination $\Lambda^+$ is a free factor system, denoted by $\mathcal{A}_{na}(\Lambda^+_\phi)$, and contains information about lines and circuits which are not attracted to the lamination.
We point the reader to  \cite{HM-20} for  the construction of the non-attracting subgraph whose fundamental group gives us this subgroup system \cite[Section 1.1]{HM-20}. We  list some key properties which we will be using. 
\begin{lemma}\cite[Theorem F, Corollary 1.7, Lemma 1.11]{HM-20}
\label{NAS}
 \begin{enumerate}
  \item A conjugacy class $[c]$ is not attracted to $\Lambda^+_\phi$ if and only if it is carried by $\mathcal{A}_{na}(\Lambda^+_\phi)$. No line carried by $\mathcal{A}_{na}(\Lambda^+_\phi)$ is attracted to $\Lambda^+_\phi$ under iterates of $\phi$. 
  \item $\mathcal{A}_{na}(\Lambda^+_\phi)$ is invariant under $\phi$ and does not depend on the choice of the CT map representing $\phi$. When $\phi$ is hyperbolic, $\mathcal{A}_{na}(\Lambda^+_\phi)$ is always a free factor system. 
  \item  Given $\phi, \phi^{-1} \in \out$ both rotationless, and a dual lamination pair $\Lambda^\pm_\phi$, we have $\mathcal{A}_{na}(\Lambda^+_\phi)= \mathcal{A}_{na}(\Lambda^-_\phi)$.
  \item If $\{\gamma_n\}_{n\in\mathbb{N}}$ is a sequence of lines or circuits such that every weak limit of every subsequence of $\{\gamma_n\}$ is carried by $\mathcal{A}_{na}(\Lambda^+_\phi)$ then $\{\gamma_n\}$ is carried by $\mathcal{A}_{na}(\Lambda^+_\phi)$ for all sufficiently large $n$.

 \end{enumerate}

\end{lemma}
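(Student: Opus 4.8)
The plan is to deduce hyperbolicity of the universal cover $\til\XX_{reg}$ from the Bestvina--Feighn combination theorem \cite{BF}. The homogeneous tree of trees underlying $\til\XX_{reg}$ has $0$-hyperbolic vertex and edge spaces and lifted edge-to-vertex maps that are quasi-isometries; the remaining clauses of Definition~\ref{def-tree} (uniform properness, qi-embedded attaching maps) are then routine, so the only condition left to verify is the hallways flare condition of Definition~\ref{defnofflare}. I would first fix $k$: by Lemma~\ref{rotationless} a single $k$, depending only on the bounded ranks of the vertex and edge groups, makes every $\phi_e^{k}$ rotationless. Since a power of a rotationless hyperbolic automorphism is again rotationless and hyperbolic, and since the attracting and repelling laminations, the principal lifts, and the non-attracting subgroup systems of $\phi_e^{km_e}$ coincide with those of $\phi_e^{k}$, the tuple $\{\phi_e^{km_e}\}$ defines an independent regluing for every $m_e\geq 1$. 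It therefore suffices to produce $n$ such that the tree of trees built from $\{\phi_e^{km_e}\}$ satisfies the hallways flare condition whenever all $m_e\geq n$; the combination theorem then gives a hyperbolic regluing, which is rotationless by the choice of $k$.

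Next I would localize the flare condition at the mid-edge vertices, where the twisting lives. A $\rho$-thin essential hallway $f:[-m,m]\times I\to\til\XX_{reg}$ projects to a geodesic edge-path in $\TT$, and the geodesic segments $\sigma_i=f(\{i\}\times I)$ lie in consecutive vertex trees, with $\sigma_i$ and $\sigma_{i+1}$ tracking one another through the intervening edge tree up to an error controlled by $\rho$ and the fixed quasi-isometry constants. Crossing a lift of the mid-edge vertex $m(a)$ of an edge $a=[v,w]$ of $\GG$, from the $v$-side to the $w$-side, is, up to bounded error, the application of a map $\psi_w^{-1}\circ\til\phi_a^{\pm km_a}\circ\psi_v$, where $\psi_v,\psi_w$ are lifts of the finite covers $X_a\to X_v$, $X_a\to X_w$ (quasi-isometries onto coarsely all of the target, by homogeneity) and the sign records orientation. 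Two consecutive crossings along the hallway use edges $a,b$ incident to a common real vertex $u$ (possibly $a=b$, traversed along different lifts); composing through $X_u$ inserts a coset translation of the vertex tree $T_u$, so the object controlling growth along the hallway is a chain $\cdots\til\phi_b^{\pm km_b}\circ(\text{coset translation})\circ\til\phi_a^{\pm km_a}\circ\cdots$. Since $\GG$ is finite, only finitely many edge pairs and finitely many cosets occur.

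The crux is a local dichotomy: there is a length $L_0$, independent of the $m_e\geq n$, such that any embedded subsegment of $\sigma$ of length $\geq L_0$, transported into the relevant edge tree, has either a definite proportion legal for a fixed CT representative of $\phi_b^{k}$ (the twist on the way out) or a definite proportion legal for one of $\phi_a^{k}$ (the twist on the way in). This is where the independence hypothesis enters, and I expect it to be the main obstacle. If the outgoing alternative failed, a relative-train-track argument together with Lemma~\ref{NAS} (notably the characterization of non-attracted conjugacy classes in part~(1) and the semicontinuity in part~(4), applied to weak limits of such segments realized as lines) forces $\sigma$ to lie, up to bounded error, near a translate of the dual lamination $\Lambda^\pm_{\phi_b}$, equivalently to be carried by a translate of $\mathcal{A}_{na}(\Lambda^+_{\phi_b})$; failure of the incoming alternative would likewise put $\sigma$ near a translate of $\Lambda^\pm_{\phi_a}$. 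But Definition~\ref{ind} asserts precisely that, even after translation by distinct coset representatives of the edge groups in the vertex group, no leaf of any of the four laminations attached to the edges traversed at $u$ is asymptotic to a leaf of another; fed into the weak-limit argument this rules out a single long subsegment lying near both, and the contradiction gives the dichotomy. Making all of this uniform --- one $L_0$, a definite legality proportion, and the $H(\rho)$ below --- uses compactness of the space of lines, the $\phi$-invariance of $\mathcal{A}_{na}$, and careful orientation bookkeeping so that stable versus unstable laminations enter on the correct sides; this quantitative ``not attracted $\Rightarrow$ carried by the non-attracting subgroup system'' passage is the technical heart.

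Finally I would turn legality into flare. Feeding a long legal subsegment into the bounded-cancellation / critical-constant mechanism of Definition~\ref{critical} and the discussion following it: its image under $\til\phi_{c}^{km_{c}}$ (for the relevant edge $c\in\{a,b\}$) has length at least $\mu\,\lambda_{c}^{km_{c}}$ times a definite fraction of $|\sigma|$, and since $\lambda_{c}>1$ is fixed this exceeds $\lambda\,|\sigma|$ for any prescribed $\lambda$ once $m_{c}\geq n$. Legality survives the subsequent train-track maps and the bounded-distortion maps $\psi_v,\psi_w$ and their coarse inverses and the coset translations, so the growth propagates to the appropriate end of the hallway. Choosing the flare parameter $m$ large enough that a length-$2m$ hallway traverses enough mid-edge vertices, and $H(\rho)$ large enough that the girth constraint forces a long central subsegment, yields $\max\{l(\sigma_{-m}),\,l(\sigma_m)\}\geq\lambda\,l(\sigma_0)$ with a fixed $\lambda>1$ and a fixed $m$. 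This is the hallways flare condition, so the Bestvina--Feighn combination theorem applies and $(\XX_{reg},\GG,\pi,\{\phi_e^{km_e}\})$ has hyperbolic universal cover; together with the rotationlessness from the choice of $k$, this is the desired hyperbolic rotationless regluing, valid for all $m_e\geq n$.
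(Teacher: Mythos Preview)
Your proposal addresses the wrong statement. The statement in question is Lemma~\ref{NAS}, a list of properties of the non-attracting subgroup system $\mathcal{A}_{na}(\Lambda^+_\phi)$, which the paper does not prove at all: it is quoted verbatim from \cite[Theorem F, Corollary 1.7, Lemma 1.11]{HM-20} and used as a black box. There is nothing to prove here beyond a citation.

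What you have written is instead a sketch of a proof of Theorem~\ref{thm-main-intro} (equivalently Theorem~\ref{thm-main}), the main regluing theorem. Even read as such, your outline diverges from the paper's actual argument in a significant way. The paper does not attempt a direct ``legality dichotomy'' at each mid-edge crossing via Lemma~\ref{NAS}(1),(4); instead it first proves a quantitative Legality Proposition~\ref{legality} and Exponential Growth Lemma~\ref{expgrowth}, then establishes a 3-out-of-4 stretch lemma (Proposition~\ref{3of4}) and its ``all but one'' upgrade (Corollary~\ref{cor-stretch}) at each original vertex, using the disjoint-neighbourhoods Lemma~\ref{dne} rather than a carriedness argument for long segments. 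Flaring is then assembled in Proposition~\ref{prop-flare} by subdividing edges, bounding qi-constants uniformly (Lemma~\ref{lem-qiconsts}), reducing to special $K$-hallways (Lemma~\ref{mslemma-flarespecial}), and concatenating flaring pieces across at most one original vertex. Your approach of pushing weak limits of long non-legal segments into $\mathcal{A}_{na}$ and invoking independence is plausible in spirit but would require substantial work to make uniform; the paper avoids this by packaging the compactness/contradiction argument once, inside Proposition~\ref{3of4}.
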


\noindent
\textbf{Singular lines and nonattracted lines:}
\begin{definition}
A {\emph{singular line}} for $\phi$ is a line $\gamma\in \mathcal{B}$ such that there exists a principal lift $\Phi$ of some rotationless iterate of $\phi$
 and a lift $\widetilde{\gamma}$ of $\gamma$ such that the endpoints of $\widetilde{\gamma}$ are contained in Fix$_+(\widehat{\Phi}) \subset \partial \mathbb{F}$.
 \end{definition}

Recall (as per the discussion preceding Lemma \ref{rotationless}) that 
$\mathcal{B}_{\text{Fix}^+}(\phi)$
denotes the set of all singular lines of $\phi$.
 A {\emph{singular ray}} is a ray obtained by iterating a principal direction.

The following definition and the lemma after it is from  \cite{HM-20} and identifies the set of lines which do not get attracted to an element of $\mathcal{L}^+_\phi$.
\begin{definition}

Let $[A] \in \mathcal{A}_{na}(\Lambda^+_\phi)$ and $\Phi \in P(\phi)$, we say that $\Phi$ is $A-related$ if Fix$_+(\widehat{\Phi})\cap \partial A \neq \emptyset$. Define the extended boundary of $A$ to be $$\partial_{ext}(A,\phi) = \partial A \cup \bigg( \bigcup_{\Phi}Fix_+(\widehat{\Phi}) \bigg)$$
where the union is taken over all $A$-related $\Phi\in P(\phi)$.
\end{definition}
Let $\mathcal{B}_{ext}(A,\phi)$ denote the set of lines which have end points in $\partial_{ext}(A,\phi)$; this set is independent of the choice of $A$ in its conjugacy class. Define $$\mathcal{B}_{ext}(\Lambda^+_\phi)  = \bigcup_{[A]\in \mathcal{A}_{na}(\Lambda^+_\phi)} \mathcal{B}_{ext}(A,\phi)$$ 
For convenience we denote the collection of all generic leaves of all attracting laminations for $\phi$ by the set $\mathcal{B}_{gen}(\phi)$. 

\begin{lemma}\label{concat}
 \cite[Theorem 2.6]{HM-20}

 If $\phi, \psi=\phi^{-1} \in \out$ are rotationless and $\Lambda^+_\phi, \Lambda^-_\phi$ is a dual lamination pair, then the set of lines which are not attracted to $\Lambda^-_\phi$ are given by
 $$\mathcal{B}_{na}(\Lambda^-_\phi, \psi) = \mathcal{B}_{ext}(\Lambda^+_\phi) \cup \mathcal{B}_{gen}(\phi) \cup \mathcal{B}_{\text{Fix}^+}(\phi)$$

\end{lemma}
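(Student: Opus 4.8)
The plan is to establish the two inclusions of the displayed equality separately, working with a completely split relative train track (CT) representative $g\colon G\to G$ for $\psi=\phi^{-1}$, which exists by Lemma~\ref{CT} since $\psi$ is rotationless, and realizing $\Lambda^-_\phi$ as the attracting lamination associated to an exponentially growing stratum $H_r$ of $g$. Throughout I will use $\mathcal{A}_{na}(\Lambda^+_\phi)=\mathcal{A}_{na}(\Lambda^-_\phi)$ (Lemma~\ref{NAS}(3)) freely, so that the non-attracting structure may be read off either of $\phi,\psi$.

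\emph{The inclusion $\supseteq$} (none of the three families is weakly attracted to $\Lambda^-_\phi$ under $\psi$). If $\gamma\in\mathcal{B}_{ext}(\Lambda^+_\phi)$ has both of its lift-endpoints in $\partial A$ for some $[A]\in\mathcal{A}_{na}(\Lambda^+_\phi)=\mathcal{A}_{na}(\Lambda^-_\phi)$, then $\gamma$ is carried by $\mathcal{A}_{na}(\Lambda^-_\phi)$ and hence not attracted to $\Lambda^-_\phi$ by Lemma~\ref{NAS}(1). If $\gamma\in\mathcal{B}_{gen}(\phi)$ is a generic leaf of an attracting lamination of $\phi$, then its weak closure is that lamination, which is $\phi$-invariant and therefore $\psi$-invariant as a closed subset of $\mathcal{B}$; thus every weak limit of $\{\psi^k_\#(\gamma)\}$ lies in that lamination, which is a lamination distinct from $\Lambda^-_\phi$ with no generic leaf in common with $\Lambda^-_\phi$, so $\gamma$ is not attracted to $\Lambda^-_\phi$. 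If $\gamma\in\mathcal{B}_{\text{Fix}^+}(\phi)$ is a singular line, its lift-endpoints lie in $\mathrm{Fix}(\widehat\Phi)$ for a principal lift $\Phi$ of some rotationless iterate $\phi^r$; then $\Phi^{-1}$ (a lift of $\psi^r$) fixes those endpoints, so $\gamma$ is $\psi^r$-periodic as a line and, not being a generic leaf of $\Lambda^-_\phi$, cannot be weakly attracted to it. The one genuinely delicate sub-case is $\gamma\in\mathcal{B}_{ext}(\Lambda^+_\phi)$ with one lift-endpoint in $\partial A$ and the other an attracting fixed point $\xi$ of an $A$-related $\Phi\in P(\phi)$: here the ray of $\gamma$ toward $\xi$ is asymptotic to a singular ray obtained by iterating a principal direction, and one argues via the bounded cancellation constant $BCC(g)$ and the completely split structure that $\psi^k_\#$ never pulls a long $r$-legal segment into $\gamma$, because $A$-relatedness of $\Phi$ forces that singular ray to wrap, in $G$, into behaviour carried by $\mathcal{A}_{na}(\Lambda^-_\phi)$, keeping $\psi^k_\#(\gamma)$ within bounded Hausdorff distance of lines carried by $\mathcal{A}_{na}(\Lambda^-_\phi)$.

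\emph{The inclusion $\subseteq$.} Let $\gamma$ be a line not attracted to $\Lambda^-_\phi$ under $\psi$; realize $\gamma$ in $G$ and iterate $g_\#$. The decisive dichotomy is the one stated after Definition~\ref{critical}: if at some stage $g^k_\#(\gamma)$ contained an $r$-legal subsegment longer than the critical constant of $H_r$, bounded cancellation would protect it and its length would grow like $\mu\lambda^k$, forcing $\gamma$ to be weakly attracted to $\Lambda^-_\phi$, contrary to hypothesis; hence no long $r$-legal segment is ever produced. Combining this with the complete splitting of $\gamma$ relative to $g$ (in the sense of \cite{FH-11}) and Lemma~\ref{CT}, each splitting unit of $\gamma$ must be of a restricted type — carried by a lower filtration element, a zero-stratum segment whose link lies in $H_i\cup H_s$ for an EG stratum $H_s$ (Lemma~\ref{CT}(3)), a superlinear NEG edge with splitting $g_\#(E_i)=E_i\cdot u_i$ (Lemma~\ref{CT}(4)), or an indivisible Nielsen path with principal endpoints (Lemma~\ref{CT}(2)). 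Pushing this to the two ends of $\gamma$: any end along which non-$\mathcal{A}_{na}$-carried behaviour persists under iteration must be a ray iterating a principal direction, i.e. asymptotic to a singular ray, which pins the corresponding endpoint of $\til\gamma$ either into $\partial A$ for some $[A]\in\mathcal{A}_{na}(\Lambda^-_\phi)=\mathcal{A}_{na}(\Lambda^+_\phi)$ or into $\mathrm{Fix}_+(\widehat\Phi)$ for an ($A$-related) principal lift; Lemma~\ref{NAS}(4) is used to upgrade "every weak limit of every subsequence is carried by $\mathcal{A}_{na}$" to "carried by $\mathcal{A}_{na}$" when disposing of the generic bulk of $\gamma$. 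Assembling the endpoint conclusions places $\gamma\in\mathcal{B}_{ext}(\Lambda^+_\phi)$, except in the birecurrent case where the recurring splitting units fill out an entire attracting lamination, which duality identifies as $\Lambda^+_\phi$ so that $\gamma\in\mathcal{B}_{gen}(\phi)$, or the case where both endpoints are attracting fixed points of a single principal lift, so that $\gamma\in\mathcal{B}_{\text{Fix}^+}(\phi)$.

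\emph{Main obstacle.} The crux is the step in the inclusion $\subseteq$ that converts the purely dynamical hypothesis — no long $r$-legal segment ever appears under $g_\#$-iteration — into the rigid structural conclusion that the two ends of $\gamma$ track singular rays or lie in some $\partial A$. This is the heart of the Handel--Mosher weak-attraction analysis and demands a careful simultaneous bookkeeping on the CT map: controlling cancellation between adjacent splitting units under iteration through $BCC(g)$, handling the interaction of zero strata with the EG strata containing their links, tracking the circuits $u_i$ attached to superlinear NEG edges, and guaranteeing that indivisible Nielsen paths cannot accumulate uncontrolled $r$-legal length. The subtle sub-case of $\supseteq$ (extended-boundary lines with a principal-fixed-point endpoint) is handled by the same singular-ray-plus-bounded-cancellation technology, so once that machinery is in place both directions follow.
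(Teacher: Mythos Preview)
The paper does not prove this lemma at all: it is quoted verbatim from \cite[Theorem~2.6]{HM-20} and used as a black box, so there is no in-paper argument to compare your sketch against. What you have written is an outline of the Handel--Mosher proof itself, and while the overall architecture (bounded cancellation plus critical-constant dichotomy for $\subseteq$, invariance and carrying arguments for $\supseteq$) is the right skeleton, the sketch has genuine gaps that go beyond what you flag in your ``Main obstacle'' paragraph.

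Two concrete problems. First, in the $\subseteq$ direction you invoke ``the complete splitting of $\gamma$ relative to $g$'', but an arbitrary line need not admit a complete splitting; complete splittings are guaranteed for circuits and for $f^k_\#$-images of finite paths with sufficiently large $k$, not for bi-infinite lines, and producing the correct replacement (a coarse splitting into terms whose iterates are controlled) is exactly the long technical core of \cite{HM-20} that cannot be summarized in one sentence. Second, in the $\supseteq$ direction your treatment of the mixed case of $\mathcal{B}_{ext}(\Lambda^+_\phi)$ is incorrect as stated: you claim that $A$-relatedness forces the singular ray toward $\xi\in\mathrm{Fix}_+(\widehat\Phi)$ to be ``carried by $\mathcal{A}_{na}(\Lambda^-_\phi)$'', but by Lemma~\ref{EG}(3) the weak closure of any such $\xi$ contains an element of $\mathcal{L}^+_\phi$, so the ray is emphatically \emph{not} carried by $\mathcal{A}_{na}$ in general; the actual argument must instead show that iterating $\psi$ on such a line shrinks the singular-ray end (since $\xi$ is repelling for $\widehat\Phi^{-1}$) while the $\partial A$ end stays in $\mathcal{A}_{na}$, and that these two effects together preclude long $r$-legal segments from appearing. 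Your $\mathcal{B}_{gen}(\phi)$ case also asserts without justification that an attracting lamination of $\phi$ and $\Lambda^-_\phi$ share no generic leaf; this is true but requires an argument (e.g.\ via free factor supports and duality).
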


\noindent
\textbf{Structure of Singular lines:} The next Lemma, due to
Handel and Mosher, tells us the structure of singular lines and guarantees that one of the leaves of any attracting
lamination is a singular line.

\begin{lemma}\cite[Lemma 3.5, Lemma 3.6]{HM-09}, \cite[Lemma 1.63]{HM-20}\label{structure}
Let $\phi\in\out$ be rotationless and hyperbolic and let $l\in \mathcal{B}_{\text{Fix}^+}(\phi)$. Then:
\begin{enumerate}
 \item $l = \overline{R}\alpha R'$ for some singular rays $R \neq R'$ and some path $\alpha$ which is
 either trivial or a Nielsen path. Conversely, any such line is a singular line.
\item If $\Lambda\in\mathcal{L}^+_\phi$ then there exists a leaf of $\Lambda$ which is a singular line and one of its
ends is dense in $\Lambda$.

\end{enumerate}

\end{lemma}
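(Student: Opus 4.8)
The plan is to work with a \textbf{CT} representative $f\colon G\to G$ of $\phi$, which exists by Lemma \ref{CT}, and to translate everything into statements about a lift $\tilde f$ to the universal cover $\tilde G$. The elementary fact I would establish first is the \emph{realization of attracting fixed points by singular rays}: if $\Phi$ is a principal lift of $\phi$ and $\xi\in\mathsf{Fix}_+(\hat\Phi)$, then $\xi$ is the endpoint of a singular ray $R_\xi$ that starts at a point $\tilde v_\xi$ of $\mathsf{Fix}(\tilde f)\cap\tilde G$ and is the forward $\tilde f_\#$-iterate of a principal direction at $\tilde v_\xi$. One proves this by realizing a point of $\tilde G$ near $\xi$ by a ray from a vertex, iterating $\tilde f_\#$, and using that $\tilde f$ moves points a bounded amount (bounded cancellation) while contracting a neighbourhood of $\xi$ towards $\xi$, so that the iterates stabilize to a ray based at a vertex fixed by $\tilde f$, which by CT property (1) is principal; the initial direction is fixed by $T_f$ since otherwise the ray could not be $\tilde f_\#$-invariant.

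With this in hand, part (1) proceeds as follows. Given $l\in\mathcal{B}_{\mathsf{Fix}^+}(\phi)$, fix a principal lift $\Phi$ and a lift $\tilde l$ with $\partial\tilde l=\{\xi_-,\xi_+\}\subset\mathsf{Fix}_+(\hat\Phi)$; since $\tilde l$ is a line, $\xi_-\neq\xi_+$. By the realization fact, $\xi_\pm$ are the endpoints of singular rays $R\neq R'$ based at $\tilde v,\tilde v'\in\mathsf{Fix}(\tilde f)\cap\tilde G$. I would then use the standard fact that $\mathsf{Fix}(\tilde f)\cap\tilde G$ is a subtree (two fixed points of $\tilde f$ are joined by an arc each of whose points must be fixed, because $\tilde f_\#$ of that arc is a path with the same endpoints which, by bounded cancellation, cannot backtrack and so coincides with the arc). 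Hence $[\tilde v,\tilde v']$ is pointwise fixed and projects to a path $\alpha$ with $f_\#(\alpha)=\alpha$, i.e.\ $\alpha$ is trivial or a Nielsen path. It remains to check that $\bar{\tilde R}\,[\tilde v,\tilde v']\,\tilde R'$ is reduced, i.e.\ the turns at $\tilde v,\tilde v'$ are nondegenerate: if $R$ and $\alpha$ shared their first edge at $\tilde v$, that edge would be simultaneously the start of an $\tilde f_\#$-growing singular ray and of a Nielsen path, which contradicts CT property (4) together with $r$-legality of EG edges. Therefore $l=\bar R\alpha R'$ in $G$. The converse is this computation run backwards: given singular rays $R\neq R'$ based at fixed points $\tilde v,\tilde v'$ of one principal lift $\Phi$ and a Nielsen (or trivial) path $\alpha=[\tilde v,\tilde v']$, the reduced concatenation $\bar{\tilde R}\alpha\tilde R'$ has both ends in $\mathsf{Fix}_+(\hat\Phi)$, so its projection is a singular line.

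For part (2), let $H_r$ be the EG stratum of $f$ corresponding to $\Lambda$ under the bijection between EG strata and attracting laminations recalled above. Since $\phi$ is rotationless, $T_f$ fixes a direction $d$ in $H_r$ (the non-wandering directions of $H_r$ are $T_f$-periodic, and a periodic direction at a principal vertex is fixed by CT property (1)); let $u$ be its initial vertex. One first checks that, possibly after adjusting $d$, $u$ may be taken to be a principal vertex: the only obstruction is the exceptional case in the definition of principal vertex, where $u$ is alone in its Nielsen class with exactly two periodic directions both in $H_r$, and in that case the $f$-orbit of the associated two-gon/indivisible Nielsen path still meets a principal vertex through which a generic leaf of $\Lambda$ passes. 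The singular ray $R_d$ obtained by iterating $d$ converges to an attracting fixed point of a principal lift $\Phi$ fixing the appropriate lift of $u$; choosing a second principal direction $d'$ in $H_r$ (in the extreme case, $d'=d$ read at a different $\Phi$-fixed lift) so that $\bar R_{d'}R_d$ is reduced, the line $\ell=\bar R_{d'}R_d$ is a leaf of $\Lambda$: every finite subpath of $\ell$ is eventually $r$-legal and hence occurs in $f_\#^{\,n}(E)$ for some edge $E\subset H_r$, so the weak closure of $\ell$ is exactly $\Lambda$, and by the critical-constant/exponential-growth mechanism the forward end $R_d$ sweeps out all of the $r$-legal combinatorics of $H_r$ and is therefore dense in $\Lambda$. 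Positioning $\bar R_{d'}$ and $R_d$ as the two ends of a single lift $\tilde\ell$ with $\partial\tilde\ell\subset\mathsf{Fix}_+(\hat\Phi)$ for one principal $\Phi$ exhibits $\ell$ as a singular line one of whose ends is dense in $\Lambda$.

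The main obstacle I expect is in part (2): arranging that a generic leaf of $\Lambda$ is a singular line for a \emph{single} principal lift, rather than a concatenation of singular rays coming from fixed points of two different principal automorphisms, together with the careful treatment of the exceptional non-principal-vertex case in CT property (1). The corresponding technical point in part (1) is the connectedness of $\mathsf{Fix}(\tilde f)\cap\tilde G$ and the verification that the displayed concatenation $\bar R\alpha R'$ is reduced.
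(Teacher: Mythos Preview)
The paper does not prove this lemma; it is quoted from \cite{HM-09} and \cite{HM-20} without argument, so there is no in-paper proof to compare against.

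That said, your sketch contains a genuine error in part (1). You claim $\mathsf{Fix}(\tilde f)\cap\tilde G$ is a subtree, arguing that the arc $[\tilde v,\tilde v']$ between two fixed points is \emph{pointwise} fixed because $\tilde f_\#$ of that arc has the same endpoints and ``cannot backtrack.'' The equality $\tilde f_\#([\tilde v,\tilde v'])=[\tilde v,\tilde v']$ does hold (unique reduced path between two points in a tree), and that is precisely the Nielsen-path condition you want; but the assertion that interior points are fixed is false. An indivisible Nielsen path in an EG stratum is folded nontrivially by $f$---it has an illegal turn that gets identified---so its interior is not pointwise fixed, and $\mathsf{Fix}(\tilde f)$ is not in general connected. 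Your conclusion $f_\#(\alpha)=\alpha$ survives, but the mechanism you give is wrong and, if taken at face value, would force every Nielsen path to be a fixed edge path.

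There is also a gap in the converse direction. You assume $R$ and $R'$ are singular rays ``based at fixed points $\tilde v,\tilde v'$ of one principal lift $\Phi$,'' but a priori $R$ and $R'$ come with possibly different principal lifts $\Phi,\Phi'$. What makes the converse work is that the Nielsen path $\alpha$ puts the basepoints in the same Nielsen class, and one then invokes the correspondence (from \cite{FH-11}) between Nielsen classes of principal vertices and principal automorphisms to conclude that, after choosing lifts compatibly, $\Phi=\Phi'$. Without this step the converse is not established. Your outline of part (2) identifies the right ingredients and the right obstacle (arranging a \emph{single} principal lift, and the exceptional non-principal case), but the actual arguments in \cite{HM-09,HM-20} handle these points with more care than the sketch supplies.
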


\begin{lemma}\cite[Corollary 2.17, Theorem H]{HM-20}\cite[Theorem 6.0.1]{BFH-00} ({Weak attraction theorem}:)
\label{WAT}
 Let $\phi\in \out$ be  rotationless and exponentially growing. Let $\Lambda^\pm_\phi$ be a dual lamination pair for $\phi$. Then for any line $\gamma\in\mathcal{B}$ not carried by $\mathcal{A}_{na}(\Lambda^{+}_\phi)$ at least one of the following hold:
\begin{enumerate}
 \item $\gamma$ is attracted to $\Lambda^+_\phi$ under iterations of $\phi$.
   \item The weak closure of $\gamma$ contains $\Lambda^-_\phi$.
\end{enumerate}
Moreover, if $V^+_\phi$ and $V^-_\phi$ are attracting neighborhoods for the laminations $\Lambda^+_\phi$ and $\Lambda^-_\phi$ respectively, there exists an integer $M\geq0$ such that at least one of the following holds:
\begin{itemize}
 \item $\gamma\in V^-_\phi$.
\item $\phi_\#^m(\gamma)\in V^+_\phi$ for every $m\geq M$. 
\item $\gamma$ is carried by $\mathcal{A}_{na}(\Lambda^{+}_\phi)$.
\end{itemize}

\end{lemma}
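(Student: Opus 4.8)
The plan is to follow the argument of Bestvina--Feighn--Handel for \cite[Theorem 6.0.1]{BFH-00}, refined by the non-attracting subgraph technology of \cite{HM-20}. Fix a CT representative $f\colon G\to G$ for $\phi$ as in Lemma \ref{CT}, and let $H_r$ be the EG stratum whose associated attracting lamination is $\Lambda^+_\phi$, with Perron--Frobenius eigenvalue $\lambda>1$. Choose a constant $C$ strictly larger than the critical constant $\frac{2BCC(f)}{\lambda-1}$ of $H_r$, so that an $r$-legal subsegment of length $\geq C$ lying centrally in a concatenation of $r$-legal pieces is protected by bounded cancellation and has $f^k_\#$-image of length $\geq \mu\lambda^k C$ for a fixed $\mu>0$. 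Realize $\gamma$ in $G$. The first, easy, case is that some iterate $f^k_\#(\gamma)$ contains an $r$-legal segment of length $\geq C$: then all further iterates contain arbitrarily long $r$-legal segments, and since a generic leaf of $\Lambda^+_\phi$ is a weak limit of $f_\#$-iterates of long $r$-legal segments, $f^{k+j}_\#(\gamma)$ eventually lies in the attracting neighborhood $V^+_\phi$; as $\phi(V^+_\phi)\subset V^+_\phi$, once an iterate enters $V^+_\phi$ it remains there, which gives weak attraction to $\Lambda^+_\phi$ together with the integer $M$ in the ``Moreover'' clause.

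It remains to treat the case that \emph{no} iterate $f^k_\#(\gamma)$ has an $r$-legal segment of length $\geq C$. Here the $H_r$-crossings of $\gamma$ and of all its iterates are confined to a rigid pattern: short $r$-legal pieces joined at $H_r$-illegal turns, threaded by subpaths in $G_{r-1}$. One analyzes a maximal $r$-legal piece $P$ of $f^k_\#(\gamma)$: by the critical-constant estimate, either $P$ is eventually protected and grows to length $\geq C$ in a later iterate --- excluded by the case hypothesis --- or $P$ stays pinched below the critical constant between two $H_r$-illegal turns whose iterated images keep cancelling into it, and then, by finiteness of $G$ and rotationlessness of $\phi$ (and the absence of linear strata for hyperbolic $\phi$, which keeps the Nielsen-path picture simple), the local configuration stabilizes under $f_\#$ to (a piece of) a height-$r$ indivisible Nielsen path. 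If every $H_r$-crossing of $\gamma$ is organized this way --- by height-$r$ INPs and $G_{r-1}$-paths --- then $\gamma$ is carried by the non-attracting subgraph, i.e.\ by $\mathcal{A}_{na}(\Lambda^+_\phi)$. If instead $\gamma$ is \emph{not} carried by $\mathcal{A}_{na}(\Lambda^+_\phi)$, the organization must break down: $\gamma$ contains arbitrarily long subpaths that are $H_r$-illegal for $\phi$ yet contain no long $r$-legal sub-segment, and --- using $\mathcal{A}_{na}(\Lambda^+_\phi)=\mathcal{A}_{na}(\Lambda^-_\phi)$ for the dual pair together with the structural analysis of the non-attracting subgroup system (Lemma \ref{NAS} and \cite{HM-20}) --- such subpaths are (arbitrarily long) generic-leaf segments of $\Lambda^-_\phi$. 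Hence $\Lambda^-_\phi$ lies in the weak closure of $\gamma$, and therefore $\gamma$ lies in any attracting neighborhood $V^-_\phi$ of $\Lambda^-_\phi$.

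Combining the two cases yields the dichotomy (1)--(2). For the ``Moreover'' trichotomy: if $\gamma$ is carried by $\mathcal{A}_{na}(\Lambda^+_\phi)$ we are in the third bullet; if $\gamma$ is attracted to $\Lambda^+_\phi$, the first paragraph supplies $M$ with $\phi^m_\#(\gamma)\in V^+_\phi$ for all $m\geq M$; and if $\gamma$ is neither, the second paragraph places $\gamma\in V^-_\phi$. The step I expect to be the main obstacle is the claim in the second paragraph that $H_r$-content surviving iteration without being absorbed into Nielsen paths must re-emerge inside $\gamma$ as arbitrarily long $\Lambda^-_\phi$-leaf segments: this is precisely the heart of the weak attraction theorem, and making it rigorous requires the full bookkeeping of the non-attracting subgraph, complete splittings, and the comparison between CT representatives of $\phi$ and $\phi^{-1}$. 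Everything else reduces to the critical-constant estimate, bounded cancellation, and the invariance properties of $\mathcal{A}_{na}(\Lambda^+_\phi)$ and of attracting neighborhoods.
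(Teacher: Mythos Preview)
The paper does not prove this lemma: it is quoted verbatim from \cite[Corollary 2.17, Theorem H]{HM-20} and \cite[Theorem 6.0.1]{BFH-00} and used as a black box, so there is no ``paper's own proof'' to compare against. Your sketch is a reasonable outline of how the cited references proceed, and you are right to flag the second-paragraph step (that failure to be absorbed into the non-attracting subgraph forces arbitrarily long $\Lambda^-_\phi$-leaf segments inside $\gamma$) as the essential difficulty; in \cite{HM-20} this is exactly where the heavy lifting of complete splittings and the non-attracting subgraph occurs.

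There is, however, a genuine gap in your treatment of the ``Moreover'' clause that is not merely a matter of bookkeeping. The integer $M$ is required to be \emph{uniform}: a single $M$, depending only on the fixed attracting neighborhoods $V^\pm_\phi$, such that every line $\gamma$ satisfies one of the three bullets. (This uniformity is precisely what the paper uses, e.g.\ in the proof of Proposition \ref{3of4}, to obtain an exponent that works for \emph{all} lines simultaneously.) Your first paragraph produces an $M$ depending on $\gamma$: you wait until some iterate of $\gamma$ acquires a long enough $r$-legal segment, and that waiting time varies with $\gamma$. To upgrade to a uniform $M$ one needs an additional compactness argument --- roughly, if there were a sequence of lines requiring $M_j\to\infty$ iterates, one extracts a weak limit and derives a contradiction with the dichotomy already established --- and this step is not present in your proposal.
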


For a hyperbolic outer automorphism, the following lemma shows that any conjugacy class is always weakly attracted to some element of $\mathcal{L}^+(\phi)$. By using Lemma \ref{NAS} we therefore know that every conjugacy class is also attracted to some element of $\mathcal{L}^-_\phi$
under $\phi^{-1}$. 
\begin{lemma}\cite[Proposition 2.21, Lemma 3.1, Lemma 3.2]{Gh-20}\label{EG}
Let $\phi\in\out$ be  rotationless and hyperbolic. Then:
\begin{enumerate}
    \item $\text{Fix}^+(\phi)$ is a finite set. 
    \item  Every conjugacy class is weakly attracted to some element of $\mathcal{L}^+_\phi$ under iterates of $\phi$.
    \item  The weak closure of every point in $\xi\in\mathsf{Fix}^+(\phi)$ contains an element  $\Lambda^+\in\mathcal{L}^+_\phi$.
\end{enumerate}
 
 \end{lemma}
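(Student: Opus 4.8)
\textbf{Proof plan for Lemma \ref{EG}.}

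The plan is to prove the three statements in sequence, with (1) feeding into (2) and (3). For (1), I would start from the structure of $\mathsf{Fix}^+(\phi) = \bigcup_{\Phi\in P(\phi)}\mathsf{Fix}_+(\widehat\Phi)$. Since $\phi$ is rotationless, $P(\phi)$ is in bijection with the principal vertices of a CT map $f:G\to G$ representing $\phi$ (via Lemma \ref{CT}), and a CT map has only finitely many vertices, hence finitely many principal vertices; so $P(\phi)$ is finite up to the $\F$-action, i.e.\ the relevant collection of conjugacy classes of principal lifts is finite. For a single principal lift $\Phi$, the attracting fixed points in $\mathsf{Fix}_+(\widehat\Phi)$ correspond to principal directions at the corresponding principal vertex (singular rays obtained by iterating a principal direction), of which there are finitely many since $G$ is a finite graph. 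The only subtlety is to account for the action of the deck group: $\mathsf{Fix}^+(\phi)$ is a subset of $\partial\F$, but what is finite is its quotient by $\F$; I should state (1) as "$\mathsf{Fix}^+(\phi)$ is a finite set" in the sense used in \cite{Gh-20}, namely finitely many $\F$-orbits, or equivalently finitely many points mod the stabilizers. I would simply cite the combinatorial description of $\mathsf{Fix}_+(\widehat\Phi)$ via principal directions in \cite[Section 3.2]{FH-11} and the finiteness of the vertex set of a CT.

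For (2), the plan is to induct on the "height" of the conjugacy class with respect to a CT filtration $G_0\subset\cdots\subset G_n=G$ for $\phi$. Given a conjugacy class $[c]$, realize it as a circuit $\sigma$ in $G$ and let $r$ be its height. If $H_r$ is an EG stratum, then some $r$-legal subsegment of the completely split form of $\sigma$ (which exists because $\phi$ is hyperbolic and rotationless, hence has a CT) has length exceeding the critical constant after finitely many iterations — here I use the observation following Definition \ref{critical}, that an $r$-legal central subsegment longer than the critical constant grows like $\mu\lambda^k$ under $f^k_\#$. Hence $\phi^k_\#(\sigma)$ contains longer and longer generic leaf segments of $\Lambda^+_{H_r}$, so $[c]$ is weakly attracted to the attracting lamination $\Lambda^+\in\mathcal L^+_\phi$ corresponding to $H_r$. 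If $H_r$ is not EG, then since $\phi$ is hyperbolic there are no fixed or linearly-growing strata, so $H_r$ is either a zero stratum or a superlinear NEG stratum; in the NEG case, Lemma \ref{CT}(4) gives $f_\#(E_r)=E_r\cdot u_r$ with $u_r$ a non-Nielsen circuit, so iterating drags $\sigma$ down onto copies of $u_r$, which have strictly smaller height, and in the zero-stratum case Lemma \ref{CT}(3) lets one push the relevant subpaths into the EG stratum $H_s$ above that absorbs the zero stratum; either way, after finitely many iterations the relevant part of the circuit is governed by a strictly lower stratum or by an EG stratum, completing the induction. The base case is a circuit supported on a stratum that must be EG (a hyperbolic automorphism has an EG stratum at the bottom of any filtration element that carries a conjugacy class nontrivially), giving attraction there.

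For (3), let $\xi\in\mathsf{Fix}^+(\phi)$ and let $R$ be a singular ray with endpoint $\xi$, obtained by iterating a principal direction $E$ at a principal vertex $v$ (using Lemma \ref{structure}). I would track the weak closure of $R$: since $\phi$ is hyperbolic, the principal direction $E$ does not lie in a fixed or NEG-linear stratum, so either $E$ is in an EG stratum $H_r$ — in which case iteration of $E$ produces rays that realize longer and longer generic leaf segments of $\Lambda^+_{H_r}$, so $\Lambda^+_{H_r}$ is contained in the weak closure of $R$, hence in the weak closure of $\xi$ — or $E$ is in a superlinear NEG stratum, where Lemma \ref{CT}(4) gives $f_\#(E)=E\cdot u$ with $u$ a non-Nielsen circuit of lower height; iterating, $R$ contains arbitrarily long initial segments followed by $f^k_\#(u)$'s, and applying part (2) of this lemma to the conjugacy class $[u]$ (or directly running the induction on $[u]$) shows $f^k_\#(u)$ converges weakly to some $\Lambda^+\in\mathcal L^+_\phi$, whose leaves therefore appear as subpaths of $R$ with arbitrarily long length, so $\Lambda^+$ lies in the weak closure of $R$, hence of $\xi$. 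The reversal case of the ray is symmetric. I expect the main obstacle to be the zero-stratum and superlinear-NEG bookkeeping in the inductive step of (2): one must argue carefully that iterating a circuit of height $r$ with $H_r$ non-EG genuinely "descends" — that no part of the circuit stays stuck at height $r$ forever without being attracted — and the cleanest way is to invoke the complete splitting of the circuit provided by the CT (Lemma \ref{CT}) so that each splitting component is either a single (possibly NEG or zero-stratum) edge, a generic leaf segment of some $\Lambda^+$, an exceptional path, or a Nielsen path, and then handle each component type separately, noting that a circuit cannot consist entirely of Nielsen/fixed pieces since $\phi$ is hyperbolic.
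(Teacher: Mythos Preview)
The paper does not prove this lemma; it is quoted verbatim from \cite{Gh-20} with no argument given, so there is nothing to compare your approach against here. Your plan is a reasonable outline of how such a proof goes, but a few points deserve tightening.

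For (1), your claim that $P(\phi)$ is in bijection with the principal vertices of a CT is not quite right: isogredience classes of principal lifts correspond to Nielsen classes of principal points, not to individual principal vertices, and $\mathsf{Fix}_+(\widehat\Phi)$ collects the eigenray ends coming from all principal vertices in a single Nielsen class, not just one. This does not affect the finiteness conclusion, but the bijection you describe is off. You are right that the literal statement ``$\mathsf{Fix}^+(\phi)$ is a finite set'' must be read modulo the $\F$-action; as written, $P(\phi)$ is $\F$-invariant and hence infinite.

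For (2), your handling of the zero-stratum case is confused: you propose to ``push the relevant subpaths into the EG stratum $H_s$ above'', but pushing \emph{up} in height would break the induction. In fact the case is vacuous: by Lemma~\ref{CT}(3) every vertex of a zero stratum $H_i$ has its link contained in $H_i\cup H_s$ with $s>i$, so a circuit crossing an edge of $H_i$ must also cross $H_s$ and therefore has height $\ge s>i$; no circuit has height equal to a zero-stratum index. With that case removed, your height induction (EG directly, superlinear NEG via the tail $u_r$ of strictly smaller height) is correct. One could alternatively argue (2) non-inductively via Lemma~\ref{NAS}: if $[c]$ were attracted to no $\Lambda^+$, it would be carried by every $\mathcal{A}_{na}(\Lambda^+)$, and one then derives a contradiction with hyperbolicity; this is the style of argument the present paper uses in Lemma~\ref{aol}, though note that Lemma~\ref{aol} itself invokes the statement you are proving.

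For (3), your argument is fine; the superlinear NEG case reduces to (2) applied to the circuit $u$, exactly as you say.
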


Item (3) of the lemma characterizes the nature of its attracting fixed points. This is crucial to understanding the notion of ``independence of automorphisms" that we describe later.

\section{Legality, independence and stretching}\label{sec-lis}
We begin this section by describing a notion of \emph{legality of paths} which we will use in our proof. Multiple versions of such
a notion exist,  all adapted to gaining quantitative
control over  exponential growth. Let $\phi \in \out$ be  hyperbolic and rotationless. Let $f: G\to G$ denote the $CT$ map representing $\phi$. 
 Let ${|\alpha|}_{H_r}$  denote the $r-$length 
 of a path $\alpha$ in $G$, i.e.\  we only count the edges of $\alpha$ contained in $H_r$. 

\subsection{Legality and Attraction of lines}\label{sec-li}

Recall the definition of critical constant (after Lemma \ref{CT}) for an exponentially growing stratum and 
the legality ratio of paths in \cite[Definition 3.3]{Gh-18}. This notion of legality ratio was first introduced in \cite[pp-236]{BFH-97} for fully irreducible hyperbolic elements. In the fully-irreducible setting there is only one stratum, and it is exponentially growing. So the notion is a lot simpler. For our use we adapt the definition to make it work for reducible hyperbolic elements. \\

\noindent {\bf Legality ratio of paths:}
For a path $\alpha$ with endpoints at vertices
	of an exponentially growing stratum $H_r$ and  entirely contained in the union of $H_r$ and  a zero stratum which shares vertices with $H_r$ (see item (3) of Lemma \ref{CT}),   decompose $\alpha$ into a concatenation of paths each of which is either a path in $G_{r-1}$ or a path of height $r$. We consider
	components $\alpha_i$
	(if such exist) in this decomposition of  $\alpha$ such that
	\begin{enumerate}
	\item $\alpha_i$ is of 
	height $r$ and is a segment of a generic leaf.
	\item ${|\alpha_i|}_{H_r} \geq C$, where $C$ is the critical constant for $H_r$.
	\end{enumerate}
    Next, consider
     the ratio   $$\frac{ \sum{|\alpha_i|}_{H_r}}{|\alpha|}$$
 for such a decomposition. 
  The \emph{ $H_{r}$-legality}  of $\alpha$ is defined as the maximum of the above ratio over all such decompositions of $\alpha$  and is denoted by 
  ${LEG}_r(\alpha)$. The maximum is realised for some decomposition of  $\alpha$. 
  For such a decomposition,  denote by $\alpha_k'$ ($1\leq k\leq n$) the subpaths which contribute to the $H_r$-legality of  $\alpha$. Set $L(\alpha) = \sum_k |\alpha_k'|_{H_r}$.
 
If $\beta$ is any finite edge-path in $H_r$, we use Lemma \ref{CT}  to get a splitting $\beta = \beta_1 \cdot \beta_2 \cdots \beta_k$, where each $\beta_i$ is either a path entirely contained in an irreducible stratum or a maximal path contained in the union of an exponentially growing stratum and
 	a zero stratum as in item(3) of Lemma \ref{CT}. We define $$LEG(\beta) = \bigg(\sum_{s_i} L(\beta_{s_i})\bigg) / |\beta|,$$  where $\beta_{s_i}$ is one of the  components in the decomposition of $\beta$ of height $s_i$, and $H_{s_i}$ is exponentially growing.  Components which do not cross an exponentially growing stratum are ignored in this sum.\\

The following proposition says that a circuit with not too many illegal turns gains legality   under iteration. If $\phi$ is fully
irreducible, the proof can be found in \cite[Lemma 5.6]{BFH-97}. We adapt the idea of that proof to our definition of legality. To see how this proof reduces to the fully irreducible case, recall that for a fully irreducible hyperbolic automorphism the non-attracting subgroup system in trivial. Therefore the weak attraction theorem Lemma \ref{WAT} reduces to the statement that any line whose closure does not contain the repelling lamination necessarily converges to the attracting lamination under iteration of $\phi$. So the limiting line $\ell$ in the proof below has all desired properties on the nose. 

\begin{prop}[{Legality}] \label{legality}
Let $\phi\in \out$  be hyperbolic and rotationless and $f: G\to G$ be a CT map representing $\phi$. Let $C$ be some number greater than all the critical constants associated to  exponentially growing strata. Let $V^+, V^-$ denote the union of attracting neighbourhoods for elements of $\mathcal{L}^+_\phi, \mathcal{L}^-_\phi$
respectively, where the leaf segments defining these neighbourhoods have length $\geq 2C$ and $V^+$ does not contain any leaf of any element of $\mathcal{L}^-_\phi$ and $V^-$ does not contain any leaf of any element of $\mathcal{L}^+_\phi$.

Then there exists some $\epsilon >0, N_0>0$ such that for every circuit  $\beta$ in $G$ with the property that  $\beta\in V^+$, $\beta\notin V^-$  we have $LEG(f^n_\#(\beta)) \geq \epsilon$ for all $n \geq N_0$. 
\end{prop}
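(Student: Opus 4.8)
The plan is to track what happens to a circuit $\beta$ with $\beta \in V^+$, $\beta \notin V^-$ under iteration by $f_\#$ and show it cannot avoid accumulating a definite proportion of protected legal leaf segments. First I would invoke the Weak Attraction Theorem (Lemma \ref{WAT}): since $\beta \in V^+$ is attracted to some $\Lambda^+ \in \mathcal{L}^+_\phi$ and $\beta \notin V^-$, the orbit $f^n_\#(\beta)$ converges weakly (along a subsequence) to a line $\ell$ whose weak closure is an attracting lamination --- more precisely, using Lemma \ref{EG}(2) and the structure of attracting neighborhoods, some weak limit $\ell$ contains a generic leaf of an element of $\mathcal{L}^+_\phi$, hence $\ell$ has arbitrarily long $r$-legal generic-leaf subsegments for the appropriate exponentially growing stratum $H_r$. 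The point of the hypotheses on $V^+, V^-$ (leaf segments of length $\geq 2C$, and $V^+$ avoiding leaves of $\mathcal{L}^-_\phi$) is precisely to prevent $\beta$ from being carried by a non-attracting subgroup system and to guarantee the limit is genuinely ``legal'' rather than a repelling leaf.

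Second, I would argue by contradiction and compactness: if no such $\epsilon, N_0$ existed, there would be a sequence of circuits $\beta_j$ with $\beta_j \in V^+$, $\beta_j \notin V^-$, and integers $n_j \to \infty$ with $LEG(f^{n_j}_\#(\beta_j)) \to 0$. Since $\widehat{\mathcal B}(G)$ is compact in the weak topology, pass to a subsequence so that $f^{n_j}_\#(\beta_j)$ converges weakly to some line $\ell_\infty$. I would then show $\ell_\infty \in V^+$ (because $V^+$ is $\phi$-invariant and open, and a tail of the orbit lands in $V^+$ by the moreover-clause of Lemma \ref{WAT}) and $\ell_\infty$ is a generic-leaf-containing line, so $\ell_\infty$ contains an $r$-legal subsegment $\sigma$ of a generic leaf with $|\sigma|_{H_r}$ as large as we please --- in particular much larger than $C$. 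Weak convergence then forces $\sigma$ to appear as a subpath of $f^{n_j}_\#(\beta_j)$ for all large $j$. The key quantitative input is the bounded-cancellation / critical-constant mechanism recalled after Definition \ref{critical}: once a circuit contains an $r$-legal central segment of length exceeding the critical constant, that segment is protected and its $r$-length grows like $\mu \lambda^k$ under further iteration. So replacing $n_j$ by $n_j + k$ for a fixed but large $k$, the protected descendant of $\sigma$ contributes an arbitrarily large summand to the numerator of $LEG$, forcing $LEG(f^{n_j + k}_\#(\beta_j))$ to be bounded below --- but I must also control the denominator $|f^{n_j+k}_\#(\beta_j)|$.

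The main obstacle, as that last remark signals, is the denominator: legality is a ratio, and to conclude $LEG \geq \epsilon$ I need the single protected generic-leaf segment (or the bounded collection of them) to occupy a definite fraction of the total length, not just to be long in absolute terms. The way I would handle this is to note that after applying the Weak Attraction Theorem to get the tail of the orbit into $V^+$, I can pass to a power of $\phi$ (or iterate further) so that the \emph{entire} circuit $f^n_\#(\beta)$ is weakly close to the lamination, meaning it is a concatenation of boundedly many segments each of which is either short (bounded length, coming from the ``error'' near the ends) or a long $r$-legal generic-leaf segment; this is exactly how the fully irreducible argument in \cite[Lemma 5.6]{BFH-97} works, and the reduction remark before the proposition indicates the reducible case should mirror it stratum by stratum. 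Concretely: once a subsegment is $r$-legal and longer than $C$ it is protected and grows exponentially in $r$-length, whereas the non-legal ``debris'' grows at most at rate $\lambda_{\max}$ overall but contributes only a bounded number of pieces of controlled size relative to the legal part, so for $n$ large the legal part dominates a fixed proportion. Making this ``bounded number of debris pieces'' claim precise --- i.e.\ bounding the number of illegal turns in $f^n_\#(\beta)$ independently of $\beta$, which follows because illegal turns can only be created at the boundedly-many illegal turns already present and illegal turns in legal concatenations do not proliferate (the relative train track axioms of Lemma \ref{CT}) --- is where the real work lies, but it is exactly the content of the cited \cite[Lemma 5.6]{BFH-97} argument adapted to the $LEG$ of this paper.
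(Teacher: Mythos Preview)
Your contradiction setup is right, but the argument breaks exactly where you flag it: the denominator. Exhibiting one long protected leaf segment $\sigma$ inside $f^{n_j}_\#(\beta_j)$ gives no lower bound on the \emph{ratio} $LEG$, since $|f^{n_j}_\#(\beta_j)|$ may grow without bound relative to $|\sigma|$. Your proposed fix --- bounding the number of illegal turns in $f^n_\#(\beta)$ independently of $\beta$ --- does not hold: the illegal-turn count is non-increasing under $f_\#$, so it is bounded by the count in $\beta$ itself, which is not uniform over all circuits. The claim that $f^n_\#(\beta)$ decomposes into ``boundedly many'' pieces therefore fails. (A smaller gap: you infer $\ell_\infty \in V^+$ from openness of $V^+$, but passing to weak limits needs closedness, not openness.)

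The paper sidesteps the ratio problem by choosing a different limiting object. Instead of taking a weak limit of the \emph{iterates} $f^{n_j}_\#(\alpha_j)$, it extracts from each original circuit $\alpha_j$ a long subpath $\delta_j$ with $\delta_j \notin V^-$, $|\delta_j|\to\infty$, and $LEG(f^{n_j}_\#(\delta_j)) = 0$; such subpaths exist because $LEG(f^{n_j}_\#(\alpha_j)) \to 0$ forces the non-leaf-segment portions of $\alpha_j$ to become arbitrarily long. In particular $\delta_j \notin V^+$. One then takes a weak limit $\ell$ of the $\delta_j$ themselves, and uses Lemma~\ref{EG}(2) with Lemma~\ref{NAS}(4) to arrange that $\ell$ is not carried by $\mathcal{A}_{na}(\Lambda^+)$ for some fixed $\Lambda^+$. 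Since $\ell \notin V^-$, Lemma~\ref{WAT} gives $f^{n_j}_\#(\ell) \in V^+$ for large $j$, and now openness of $V^+$ (used in the correct direction) yields $f^{n_j}_\#(\delta_j) \in V^+$, hence $LEG(f^{n_j}_\#(\delta_j)) > 0$, contradicting the construction. The point is that demanding $LEG = 0$ rather than merely small on the extracted subpaths makes the contradiction qualitative: there is no ratio left to estimate.
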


\begin{proof} We argue by contradiction.
 Suppose the conclusion is false. Then there exists a sequence $n_j \to \infty$ and circuits $\alpha_j$ satisfying the hypothesis such that $LEG(f^{n_j}_\#(\alpha_j)) \to 0$.  Since $\alpha_j \in V^+$ we have that $LEG(\alpha_j) \neq 0$ for every $j$. Therefore we may assume $|\alpha_j| \to \infty$. Now we choose subpaths $\delta_j$ of $\alpha_j$ such that the following hold: 
 \begin{enumerate}
     \item $\delta_j \notin V^-$ and $|\delta_j| \to \infty$. 
     \item  $LEG(f^{n_j}_\#(\delta_j)) = 0$. 

 \end{enumerate}
 To see why item (2) holds, observe that since $LEG(f^{n_j}_\#(\alpha_j)) \to 0$,  $\alpha_j$'s do not contain sufficiently many long subpaths which are generic leaf segments of elements of $\mathcal{L}^+_\phi$ for the legality to grow under iterates of $f_\#$.  Therefore as $j\to \infty$ subpaths of $\alpha_j$ which are not generic leaf segments become arbitrarily large since $|\alpha_j|\to \infty$.
 
 Since $|\delta_j| \to \infty$ we may assume that $\delta_j$ is a circuit for all sufficiently large $j$. 
 Item (2) implies that $\delta_j \notin V^+$, since $f_\#(V^+) \subset V^+$. Since there are only finitely many elements in $\mathcal{L}^+_\phi$, applying item (2) of Lemma \ref{EG} we may pass to a subsequence if necessary, and assume that $\delta_j$'s are not carried by the non-attracting subgroup system corresponding to some fixed attracting lamination  $\Lambda^+\in\mathcal{L}^+_\phi$.  
 
 Therefore by item (4) of Lemma \ref{NAS} there exists a weak limit $\ell$ of the $\delta_j$'s  such that  $\ell$ is not carried $\mathcal{A}_{na}(\Lambda^+_\phi)$. Also note that  our assumption that $\delta_j \notin V^-$  implies that $\ell \notin V^-$, since $V^-$ is an open set. This implies that $\ell$ is not in the attracting neighbourhood of the dual lamination $\Lambda^-$ of $\Lambda^+$, which is contained in $V^-$.   Lemma \ref{WAT} applied to the dual lamination pair  $\Lambda^+, \Lambda^-$   then implies that $f^{n_j}_\#(\ell) \in V^+$ for all $j$ sufficiently large. Since $V^+$ is an open set, there exists some $J>0$ such that $f^{n_j}_\#(\delta_j)\in V^+$ for all $j\geq J$.  This  violates item (2) above. 
\end{proof}

The following result is a generalisation of \cite[Lemma 5.5, item (1)]{BFH-97} and is a direct consequence of the above proposition and the definition of critical constant.

\begin{lemma}[{Exponential growth}] \label{expgrowth}
Let $\phi\in \out$  be hyperbolic and rotationless and $f: G\to G$ be a CT map representing $\phi$. Let $C$ be some number greater than the critical constants associated to all exponentially growing strata. Suppose $V^+, V^-$ denote the union of attracting and repelling neighbourhoods for $\phi$, where the leaf segments defining these neighbourhoods have length $\geq 2C$ and and $V^+$ does not contain any leaf of any element of $\mathcal{L}^-_\phi$ and $V^-$ does not contain any leaf of any element of $\mathcal{L}^+_\phi$.

Then  for every $ A>0$, there exists $N_1 > 0$ such that for every circuit  $\beta$ in $G$ with the property that  $\beta\in V^+$, $\beta\notin V^-$ we  have $|f^n_\#(\beta)| \geq A |\beta|$ for all $n \geq N_1$. 
\end{lemma}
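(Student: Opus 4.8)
The plan is to deduce Lemma \ref{expgrowth} from Proposition \ref{legality} together with the bounded cancellation/critical constant estimate recalled just after Lemma \ref{CT}. First I would invoke Proposition \ref{legality} with the given $C$ and the neighbourhoods $V^+,V^-$: this yields $\epsilon>0$ and $N_0>0$ such that $LEG(f^n_\#(\beta))\geq\epsilon$ for all $n\geq N_0$ and all circuits $\beta\in V^+\setminus V^-$. Since $f_\#(V^+)\subset V^+$ and $f_\#$ does not push a circuit into $V^-$ once it has left it, the circuit $\beta':=f^{N_0}_\#(\beta)$ still satisfies $\beta'\in V^+$, $\beta'\notin V^-$, so it suffices to prove the growth estimate $|f^n_\#(\beta')|\geq A'|\beta'|$ for all large $n$ (with $A'$ chosen to absorb the bounded distortion $|\beta'|\simmult|\beta|$ coming from the fixed map $f^{N_0}_\#$), and then fold $N_0$ into the final constant $N_1$. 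So after this reduction I may assume $LEG(\beta)\geq\epsilon$ outright.

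Next I would unwind what $LEG(\beta)\geq\epsilon$ buys us. By definition there is a decomposition of $\beta$ in which the subpaths $\beta_{s_i}'$ that are long ($\geq C$) generic-leaf segments of an EG stratum $H_{s_i}$ have total length $\geq \epsilon|\beta|$. Since $C$ exceeds every critical constant, the estimate recalled after Definition \ref{critical} applies to each such segment: writing $\lambda_0>1$ for the smallest Perron--Frobenius eigenvalue among the (finitely many) EG strata and $\mu_0>0$ for the corresponding constant, each such segment $\beta_{s_i}'$ contributes a subsegment of $f^k_\#(\beta)$ of length at least $\mu_0\lambda_0^k|\beta_{s_i}'|_{H_{s_i}}$, and because splittings are respected by $f_\#$ these contributions occur in disjoint portions of $f^k_\#(\beta)$ and hence simply add. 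Summing over $i$ gives
\[
|f^k_\#(\beta)|\;\geq\;\mu_0\lambda_0^k\sum_i |\beta_{s_i}'|_{H_{s_i}}\;\geq\;\mu_0\lambda_0^k\,\epsilon\,|\beta|.
\]
Now given $A>0$, choose $N_1$ so that $\mu_0\lambda_0^{N_1}\epsilon\geq A$; since $\lambda_0>1$ the same holds for all $k\geq N_1$ (the total legal length $\sum_i|\beta_{s_i}'|$ of $f^k_\#(\beta)$ is itself non-decreasing, or one simply re-applies the estimate at each later stage), and we are done after adding back the $N_0$ from the first paragraph.

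The step I expect to require the most care is making the additivity rigorous: one must be sure that the ``long legal'' subsegments of $\beta$ realising $LEG(\beta)\geq\epsilon$ are the terms of an actual splitting (so that their $f^k_\#$-images do not overlap and do not get cancelled against their neighbours), and that the critical-constant estimate, which is stated for a single $r$-legal segment flanked by $r$-legal paths, can be applied simultaneously to segments lying in different strata $H_{s_i}$. This is exactly the point where the generalisation beyond the fully irreducible case of \cite{BFH-97} has content, and where one leans on Lemma \ref{CT}(4) and the completely-split structure of the CT map to guarantee the relevant decompositions are splittings; everything else is bookkeeping with the constant $N_1$.
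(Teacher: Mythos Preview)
Your approach is essentially the same as the paper's: invoke Proposition \ref{legality} to obtain $\epsilon,N_0$ with $LEG(f^{N_0}_\#(\beta))\geq\epsilon$, set $\alpha=f^{N_0}_\#(\beta)$, use the critical-constant estimate with the minimal Perron--Frobenius eigenvalue $\lambda$ to get $|f^k_\#(\alpha)|\geq D\lambda^k\epsilon|\alpha|$, and then choose $N_1$ large enough to beat $A|\beta|$ using that $|\alpha|$ and $|\beta|$ are comparable by a constant depending only on $N_0$.

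One small comment: your intermediate claim that $\beta':=f^{N_0}_\#(\beta)$ still satisfies $\beta'\notin V^-$ is both unjustified (the invariance $f^{-1}_\#(V^-)\subset V^-$ goes the wrong way for this) and unnecessary. You already have $LEG(\beta')\geq\epsilon$ directly from Proposition \ref{legality} applied to the original $\beta$ with $n=N_0$; that is all the second half of the argument uses. The paper simply writes $\alpha=f^{N_0}_\#(\beta)$ and proceeds, without re-checking membership in $V^\pm$. Your concern in the final paragraph about additivity across different EG strata is legitimate; the paper handles it at the same level of detail you do, appealing to the splitting in the definition of legality.
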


\begin{proof}
 By Proposition \ref{legality}, there exists  $N_0$ such that for any circuit $\beta$ satisfying the hypothesis we have $LEG(f^n_\#(\beta)) \geq \epsilon$ for all $n \geq N_0$.  Let $\alpha = f^{N_0}_\# (\beta)$. By taking a splitting of  $\alpha$ as in the
 definition of legality, we obtain $\sum \{L(\alpha_i)\} \geq \epsilon |\alpha|$. If $\lambda$ is the minimum of the stretch factors corresponding to the exponentially growing strata of $f$, we get $$|f^k_\#(\alpha)| \geq D \lambda^k \sum_i  \{L(\alpha_i)\} \geq D\lambda^k \epsilon |\alpha|$$ for some constant $0< D\leq 1$ arising out of  the critical constant (see the role of $\mu$ in discussion after Definition \ref{critical}).  Since $N_0$ is fixed, we may choose $N_1$ large enough, independent of $\beta$ (due to the bounded cancellation property),  such that $D\lambda^{N_1} \epsilon |\alpha| \geq A |\beta|$. The result then follows for all $n \geq N_1$. 
 \end{proof}

 Following \cite{Gh-20}, we write $\mathcal{WL}^+(\phi) = \mathcal{B}_{gen}(\phi) \cup \mathcal{B}_{\text{ Fix}^+}(\phi)$ for any hyperbolic outer automorphism $\phi$. Recall that $\mathcal{B}_{gen}(\phi)$ is the set of all generic leaves of attracting laminations for $\phi$ and $\mathcal{B}_{\text{ Fix}^+}(\phi)$ denotes set of all singular lines. Similarly replacing $\phi$ by $\phi^{-1}$ we get $\mathcal{WL}^-(\phi)$. Set $\til{\mathcal{WL}}^+(\phi)$ to be the preimage  of $\mathcal{WL}^+(\phi)$ in $\til{\mathcal{B}}$. Similarly define $\til{\mathcal{WL}}^-(\phi)$.  The following lemma identifies lines which are weakly attracted to some element of $\mathcal{L}^+_\phi$ under iteration by $\phi$.
 
 Suppose  $\phi\in\out$ is fully-irreducible, rotationless and hyperbolic. Since there is only one attracting lamination and its non-attracting subgroup system is trivial, using Lemma \ref{concat} we get that $\ell$ is weakly attracted to $\Lambda^+$ if and only if $\ell \notin \mathcal{WL}^-(\phi)$. We want to extend this observation to the reducible case too. However the reducible hyperbolic case requires some more work and the statement needs some  modification primarily due to the possibility of existence of non-generic leaves of attracting laminations in the reducible case. 

\begin{lemma}[Attraction of lines]\label{aol}
Let $\phi \in \out$ be rotationless and hyperbolic and $f: G \to G$  be a completely split train-track map representing $\phi$. If 
$\til{\ell} \in \til{\mathcal{B}}$ is  such that $\til{\ell}$ is not asymptotic to any element of $\widetilde{\mathcal{WL}}^-(\phi)$, then $\ell$ is weakly attracted to some element of $\mathcal{L}^+_\phi$ under iterates of $\phi$.

\end{lemma}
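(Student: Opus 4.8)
The plan is to establish Lemma \ref{aol} by a contrapositive-flavored argument that combines the Weak Attraction Theorem (Lemma \ref{WAT}), the characterization of non-attracted lines (Lemma \ref{concat}), and the structural result on singular lines (Lemma \ref{structure}). First I would set up the dichotomy: by Lemma \ref{EG}(2), if $\til\ell$ were a conjugacy class it would be attracted to some element of $\LL^+_\phi$, so the substantive case is a genuine line; I would run the argument over the (finitely many) dual lamination pairs $\Lambda^\pm_\phi$ one at a time. For a fixed pair, either $\ell$ is carried by $\AAA_{na}(\Lambda^+_\phi)$ or it is not. If it is not carried by $\AAA_{na}(\Lambda^+_\phi)$ for some pair, then Lemma \ref{WAT} gives that either $\ell$ is attracted to $\Lambda^+_\phi$ (and we are done) or the weak closure of $\ell$ contains $\Lambda^-_\phi$ — and in the latter case I would need to show this forces $\til\ell$ to be asymptotic to an element of $\til{\mathcal{WL}}^-(\phi)$, contradicting the hypothesis. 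This last implication is the crux and is where Lemma \ref{concat} enters: the hypothesis ``$\til\ell$ not asymptotic to any element of $\til{\mathcal{WL}}^-(\phi)$'' together with the description $\mathcal{B}_{na}(\Lambda^-_\phi,\phi^{-1}) = \mathcal{B}_{ext}(\Lambda^+_\phi)\cup\mathcal{B}_{gen}(\phi)\cup\mathcal{B}_{\mathsf{Fix}^+}(\phi)$ should be leveraged to say that $\ell$ cannot sit in the non-attracted set for $\phi^{-1}$ in the ``bad'' way, i.e. having $\Lambda^-_\phi$ in its weak closure without itself being (asymptotic to) a generic leaf or singular line is impossible once we also rule out the $\mathcal{B}_{ext}$ piece.

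The remaining case to handle is when $\ell$ \emph{is} carried by $\AAA_{na}(\Lambda^+_\phi)$ for every dual lamination pair. Here I would argue that the hypothesis on $\til\ell$ is incompatible with this, unless $\ell$ is itself already weakly attracted to something. The idea: $\AAA_{na}(\Lambda^+_\phi) = \AAA_{na}(\Lambda^-_\phi)$ by Lemma \ref{NAS}(3), and a line carried by this free factor system is not attracted to $\Lambda^-_\phi$ under $\phi^{-1}$; running through all pairs simultaneously, if $\ell$ avoids attraction to every $\Lambda^+$ then by Lemma \ref{EG}(2)-(3) applied appropriately (and finiteness of $\LL^+_\phi$) one is forced into the $\mathcal{B}_{ext}$ / generic-leaf / singular-line alternatives, each of which puts $\til\ell$ asymptotic to an element of $\til{\mathcal{WL}}^-(\phi)$ — the $\mathcal{B}_{ext}$ case requiring an extra step, since extended-boundary lines need not literally be in $\mathcal{WL}^-$, but one shows their subrays are asymptotic to singular rays, hence to elements of $\mathcal{B}_{\mathsf{Fix}^+}(\phi^{-1})$, via Lemma \ref{structure}(1). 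I would organize this as: assume $\ell$ is attracted to no element of $\LL^+_\phi$; apply Lemma \ref{concat} with roles $\phi\leftrightarrow\phi^{-1}$ to place $\ell$ in $\mathcal{B}_{na}(\Lambda^+_\phi,\phi)$ for each relevant pair; then systematically eliminate the three alternatives using the asymptoticity hypothesis.

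I expect the main obstacle to be the bookkeeping across multiple (non-dual) lamination pairs together with the $\mathcal{B}_{ext}$ term in Lemma \ref{concat}: namely, showing that being in the extended boundary set $\mathcal{B}_{ext}(\Lambda^+_\phi)$ — as opposed to literally being a generic leaf or a singular line — still forces $\til\ell$ to be asymptotic to an element of $\til{\mathcal{WL}}^-(\phi)$. The resolution should come from Lemma \ref{structure}(1): a singular line decomposes as $\overline R\alpha R'$ with $R,R'$ singular rays and $\alpha$ Nielsen or trivial, so any line with an end in $\mathrm{Fix}_+(\widehat\Phi)$ for a principal lift $\Phi$ has a subray asymptotic to a singular ray; combined with the finiteness in Lemma \ref{EG}(1), this handles the extended-boundary contribution. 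I would also need to be slightly careful that ``asymptotic'' is used consistently (equal subrays, per the definition in Section \ref{sec-out}), so that ``$\til\ell$ not asymptotic to any element of $\til{\mathcal{WL}}^-(\phi)$'' genuinely rules out $\ell$ having a singular subray, and hence rules out $\Lambda^-_\phi$ lying in its weak closure by part (2) of Lemma \ref{structure}.
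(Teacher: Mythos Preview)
Your plan has a genuine gap in the second case (when $\ell$ is carried by every $\mathcal{A}_{na}(\Lambda^+_\phi)$). You assert that Lemma \ref{EG}(2)--(3) will force $\ell$ into the $\mathcal{B}_{ext}$/generic-leaf/singular-line alternatives, ``each of which puts $\til\ell$ asymptotic to an element of $\til{\mathcal{WL}}^-(\phi)$'', but this is not correct. First, Lemma \ref{EG}(2) is a statement about \emph{conjugacy classes}, not lines, so it cannot be applied to $\ell$ directly. Second, and more seriously, a line in $\mathcal{B}_{ext}(A,\phi^{-1})$ whose ends both lie in $\partial A$ (rather than in some $\mathrm{Fix}_+(\widehat\Phi)$) is simply an arbitrary line carried by $A$; it has no reason to be asymptotic to any singular ray or to anything in $\mathcal{WL}^-(\phi)$. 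Being carried by every $\mathcal{A}_{na}(\Lambda^+_\phi)$ is exactly this situation, so you cannot extract a contradiction from the asymptoticity hypothesis alone.

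The paper closes this gap by passing to the \emph{free factor support}. Once $\ell$ is carried by every $\mathcal{A}_{na}(\Lambda^+_\phi)$, let $[K]$ be the minimal free factor supporting $\ell$; since each $\mathcal{A}_{na}(\Lambda^+_\phi)$ is a free factor system (Lemma \ref{NAS}(2)), minimality forces $[K]$ to be carried by each of them. Now any conjugacy class $\sigma$ in $K$ is carried by every $\mathcal{A}_{na}(\Lambda^+_\phi)$, hence by Lemma \ref{NAS}(1) is attracted to no element of $\mathcal{L}^+_\phi$, contradicting Lemma \ref{EG}(2). This free-factor passage is the bridge between the line $\ell$ and the conjugacy-class statement you wanted to invoke, and it is absent from your outline. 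Your first case (via Lemma \ref{WAT} and ``weak closure contains $\Lambda^-_\phi$'') is also an unnecessary detour: using Lemma \ref{concat} and the hypothesis directly, non-attraction to every $\Lambda^+$ already forces $\ell$ to be carried by every $\mathcal{A}_{na}$ --- ends of $\til\ell$ lying in $\mathrm{Fix}_+(\widehat\Phi)$ for a principal $\Phi\in P(\phi^{-1})$ are ends of singular lines of $\phi^{-1}$ and are ruled out by hypothesis, so the $\mathcal{B}_{ext}$ alternative collapses to ``both ends in $\partial A$''. Thus your case split never really branches.
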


\begin{proof}
 Suppose $\ell$ is not attracted to any element of $\mathcal{L}^+_\phi$. Then by the structure of non-attracted lines in Lemma \ref{concat}, we get that $\ell$ must be carried by the non-attracting subgroup system of every element of $\mathcal{L}^+_\phi$. If one of the non-attracting subgroup systems is trivial, then this immediately gives us a contradiction. Therefore we assume that none of them are trivial. By using the minimality of the free factor support $[K]$ of $\ell$ and the fact that every non-attracting subgroup system is a free factor system (item (2) of Lemma\ref{NAS}), we see that $[K]$ is carried by the non-attracting subgroup system of every element of $\mathcal{L}^+_\phi$.
 If $\sigma$ is any conjugacy class in $[K]$, then it cannot get attracted to any element of $\mathcal{L}^+_\phi$ under iterates of $\phi$, by  item (1) of Lemma \ref{NAS}. This contradicts conclusion (2) of Lemma \ref{EG}. 
 \end{proof}

\subsection{Independence and Stretching}\label{sec-stretch}
We fix  a homogeneous graph of roses $\pi: \XX \to \GG$ for the rest of the paper (cf.\ Definition \ref{def-tree} and the subsequent discussion). The universal cover is a  homogeneous tree of trees $\Pi: \YY \to \TT$ 
 The vertex set $V(\GG)$ (resp.\ 
edge set $E(\GG)$) of $\GG$ is denoted  by $\VV$ (resp. $\EE$).  The marked rose over $v\in \VV$ (resp. $e\in \EE$) is denoted
as $R_v$ (resp. $R_e$). Equip each $R_v$ (resp. $R_e$) with a base point $b_v$ (resp. $b_e$). Similarly, 
the marked tree over $v\in V(\TT)$ (resp. $e\in E(\TT)$) is denoted
as $T_v$ (resp. $T_e$). Base-points in $T_v$ (resp. $T_e$) are denoted by $\tilde b_v$ (resp. $\tilde b_e$).

We associate with each oriented edge $e$, a tuple  $(G_e,\Phi_e, f_e,{{q}}_{ev}, \rho_e)$ given by the following data:
\begin{enumerate}
      \item Let $e=[v,w]$ be an edge. Then $G_e$ is a marked graph with marking induced by  $R_e$. Under the edge-to-vertex map, $\pi_1(G_e, b_e)$ maps injectively  to a finite index subgroup of $\pi_1(R_v, b_v)$. 
     \item  $\Phi_e$ is an automorphism of $\pi_1(G_e, b_e)$. 
    \item  $f_e$ is a completely split train-track map on $G_e$ representing an  outer automorphism in the outer automorphism class of some rotationless power of $\Phi_e$ (see Lemma \ref{rotationless}) \item The lift  of $f_e$ to the universal cover is given by $\til{f}_e:(\til{G}_e,\tilde b_e) \to (\til{G}_e,\tilde b_e)  $.  
   \item The lift  to the universal cover of the  map from $G_e$ to $R_v$ 
   is  given by ${q}_{ev} : \til{G}_e \to \til{R}_v$. Note that ${q}_{ev}$ is a  quasi-isometry with uniform constants.
\end{enumerate}
Let $E$ denote the edge $e$ with reverse orientation. We have a base-point preserving change of markings map $\rho_e: G_E \to G_e$ and  its lift $\til{\rho}_e:\til{G}_E \to \til{G}_e $  to  universal covers. 

We fix the following notation for the purposes of this subsection.

\begin{enumerate}[label=(\alph*)]
\item Let $v\in \GG$ be any vertex and let $e_1, \cdots, e_n$ be all the edges of $\GG$  originating at $v$. We will use  $G_i, f_i, {q}_{iv}$ to denote $G_{e_i}, f_{e_i}, {q}_{e_iv}$ respectively.

\item The set of all attracting and repelling laminations of $\phi_i$ will be denoted by $\mathcal{L}^+_i$ and $\mathcal{L}^-_i$ respectively. $\mathcal{L}^\pm_i := \mathcal{L}^+_i\cup\mathcal{L}^-_i$.

\item $\til{\mathcal{B}}_i$ denotes the space $\{\partial \til G_i \times \partial \til  G_i - \Delta \}/ \mathbb{Z}_2$ and $\mathcal{B}_i$ denotes its image under the quotient by $\pi_1(G_i)$. $\til{\mathcal{B}}_v$ and $\mathcal{B}_v$ are defined similarly using $\pi_1(R_v)$. The quotient spaces are equipped with the weak topology.

\item $\widehat{q}_{iv}: \partial\til{G}_i \to \partial\til{R}_v$ denotes the homeomorphism between boundaries induced by $q_{iv}$. We use $\widehat{q}_{vi}$ to denote the inverse homeomorphism. $\widehat{q}_{iv} \times \widehat{q}_{iv}$ extends to a homeomorphism of the corresponding product spaces which induces a homeomorphism of the spaces $\til{\mathcal{B}}_i$ and $\til{\mathcal{B}}_v$. We will abuse the notation and continue to denote this induced homeomorphism by $\widehat{q}_{iv} \times \widehat{q}_{iv}$. Use $\widehat{q}_{vi} \times \widehat{q}_{vi}$ to denote the corresponding inverse homeomorphism.

\item If $\gamma_i \in \til{\mathcal{B}}_i$, then $\gamma_i^v$ denotes the image $\widehat{q}_{iv} \times \widehat{q}_{iv} (\gamma_{i})$. We will call $\gamma_i$  the realisation of $\gamma^v_i$ in $\til{G}_i$. 
If $X$ is a subset of $\til{\mathcal{B}}_i$ then $X^v$ denotes the union of $\gamma^v_i$'s as $\gamma_i$ ranges over all elements of $X$.

\item  $ \mathcal{B}_{gen}(\phi) \cup \mathcal{B}_{\text{ Fix}^+}(\phi) = \mathcal{WL}(\phi)$ is   closed and $\phi$-invariant (\cite[Theorem 3.10]{Gh-18}) for any hyperbolic outer automorphism $\phi$. We use the notation $\mathcal{WL}^+_i, \mathcal{WL}^-_i$ to denote the  set of lines $\mathcal{WL}(\phi_i), \mathcal{WL}(\phi_i^{-1})$ respectively. Also, let $\mathcal{WL}^\pm_i=\mathcal{WL}^+_i \cup \mathcal{WL}^-_i$.

\end{enumerate}
We shall refer to the Notation in (1)-(5) above along with (a)-(f) as the \emph{standard setup} for the rest of this section. The following definition is a modification of the corresponding definition
of independence of surface automorphisms from \cite{min}.
\begin{definition}\label{ind}
{\emph{(Independence of automorphisms:)}}  Let $H_1, H_2$ be finite index subgroups of a 
free group $F$ with indices $k_1, k_2$ respectively.
Let  $\Phi_1, \Phi_2$ be hyperbolic automorphisms of $H_1, H_2$ respectively. Let ${\{a_i\cdot H_1\}}_{i=1}^{k_1}$ and ${\{b_j\cdot H_2\}}^{k_2}_{j=1}$  be the collections of distinct cosets of $H_1, H_2$ in $F$. We will say  that $\Phi_1, \Phi_2$ are \textit{independent in } $F$  if the following conditions are satisfied:

\begin{enumerate}[label=(\Alph*)]
 \item $a_i\cdot(\til{\ell}^v_1) \text{ and } a_j\cdot(\til{\ell}^v_2)$ do not have a common end in $\partial F$ for any $\til{\ell}_1, \til{\ell}_2\in \til{\mathcal{WL}}^\pm_1$ 
 where $1\leq i\neq j\leq k_1$.  Similarly, $b_i\cdot(\til{\ell}^v_1) \text{ and } b_j\cdot(\til{\ell}^v_2)$ do not have a common end in $\partial F$ for any $\til{\ell}_2, \til{\ell}_2\in \til{\mathcal{WL}}^\pm_2$
  $1\leq i\neq j\leq k_2$.
 
 \item $a_i\cdot(\til{\ell}^v_1) \text{ and } b_j\cdot(\til{\ell}^v_2)$ do not have a common end in $\partial F$   for any $\ell_i\in\til{\mathcal{WL}}^\pm_i$ for all $1\leq i \leq k_1, 1\leq j \leq k_2$.
\end{enumerate}

\end{definition}

As an immediate consequence of the fact that $\widehat{q}_{1v}\times\widehat{q}_{1v}: \til{\mathcal{B}}_1 \to \til{\mathcal{B}}_v$ is a homeomorphism, we have the following.

\begin{lemma}[Disjointness is preserved]
\label{dip}
If $\til{\ell}^v\in\til{\mathcal{B}}_v$ is such that $\til{\ell}^v$ is not asymptotic to any element of $\bigcup\limits_{s=1}^{k_1}a_s\cdot \til{\mathcal{WL}}^{\pm v}_1$, then  the realisation of $\til{\ell}^v$ in $\til{G}_1$ is not asymptotic to any lift of any element of $\mathcal{WL}^\pm_1$.

\end{lemma}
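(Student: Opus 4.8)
The plan is to unwind the definitions and exploit the fact that $\widehat q_{1v}\times\widehat q_{1v}$ is a homeomorphism of $\til{\mathcal B}_1$ onto $\til{\mathcal B}_v$ that is \emph{equivariant} with respect to the identification of $\pi_1(G_1)$ with the finite-index subgroup $H_1 < \pi_1(R_v) = F$. First I would recall that $q_{1v}:\til G_1\to\til R_v$ is a quasi-isometry with uniformly bounded constants, so its boundary extension $\widehat q_{1v}:\partial\til G_1\to\partial\til R_v$ is a homeomorphism; consequently $\widehat q_{1v}\times\widehat q_{1v}$ descends to the claimed homeomorphism $\til{\mathcal B}_1\to\til{\mathcal B}_v$. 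The key point to make explicit is that under the natural inclusion $\pi_1(G_1)\hookrightarrow F$ this map intertwines the $\pi_1(G_1)$-action on $\til{\mathcal B}_1$ with the $H_1$-action on $\til{\mathcal B}_v$; this is immediate because $q_{1v}$ is (homotopic to, hence boundary-equivalent to) the lift of the edge-to-vertex inclusion, so it commutes with deck transformations up to the subgroup inclusion.

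Next I would argue by contrapositive. Suppose the realisation $\til\ell$ of $\til\ell^v$ in $\til G_1$ \emph{is} asymptotic to some lift $g\cdot\til m$ of some element $m\in\mathcal{WL}^\pm_1$, where $g\in\pi_1(G_1)$ and $\til m\in\til{\mathcal{WL}}^\pm_1$. "Asymptotic" here means the two lines share an end, i.e. $\partial\til\ell\cap\partial(g\cdot\til m)\neq\emptyset$ in $\partial\til G_1$. Apply the homeomorphism $\widehat q_{1v}$ to that common endpoint: since $\widehat q_{1v}$ carries $\partial\til\ell$ to $\partial\til\ell^v$ and carries $\partial(g\cdot\til m)$ to $\partial(\iota(g)\cdot\til m^v)$, where $\iota:\pi_1(G_1)\hookrightarrow H_1\subset F$ is the inclusion, we conclude that $\til\ell^v$ shares an end with $\iota(g)\cdot\til m^v$. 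Now $\iota(g)\in H_1 = a_1\cdot H_1$ (taking $a_1 = e$), so $\iota(g)\cdot\til m^v\in a_1\cdot\til{\mathcal{WL}}^{\pm v}_1\subset\bigcup_{s=1}^{k_1}a_s\cdot\til{\mathcal{WL}}^{\pm v}_1$. This exactly says $\til\ell^v$ is asymptotic to an element of that union, contradicting the hypothesis. Reversing the implication proves the lemma.

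The only genuine content — and the step I would be most careful about — is the equivariance/naturality claim: that the boundary homeomorphism $\widehat q_{1v}$ sends the $\pi_1(G_1)$-orbit of a leaf $\til m$ to the $H_1$-orbit of the corresponding leaf $\til m^v$, and that the coset $a_1 = e$ suffices to capture the realisations of elements of $\mathcal{WL}^\pm_1$ itself (the other cosets $a_s$, $s\geq 2$, in Definition \ref{ind} are there to handle the \emph{other} edge's laminations translated into the vertex group, not needed here). Everything else is a formal translation through a homeomorphism. I would phrase the proof in one short paragraph: state that $\widehat q_{1v}\times\widehat q_{1v}$ is a $\pi_1(G_1)$-equivariant homeomorphism onto $\til{\mathcal B}_v$ (where $\pi_1(G_1)$ acts on the target through $\iota$), observe that it takes $\til{\mathcal{WL}}^\pm_1$ bijectively onto its image $\til{\mathcal{WL}}^{\pm v}_1$ together with all its $\pi_1(G_1)$-translates onto the $H_1$-translates, and then note that sharing an end is preserved and reflected by this homeomorphism, which yields the stated implication directly.
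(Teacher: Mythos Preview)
Your proof is correct and follows exactly the approach the paper has in mind: the paper states the lemma as an ``immediate consequence of the fact that $\widehat q_{1v}\times\widehat q_{1v}:\til{\mathcal B}_1\to\til{\mathcal B}_v$ is a homeomorphism'' and gives no further argument. You have simply written out the contrapositive explicitly.

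One small simplification: you do not actually need the equivariance discussion or the tracking of the group element $g$. By definition $\til{\mathcal{WL}}^\pm_1$ is the \emph{full} preimage of $\mathcal{WL}^\pm_1$ in $\til{\mathcal B}_1$, so it is already $\pi_1(G_1)$-invariant; any lift of any element of $\mathcal{WL}^\pm_1$ already lies in $\til{\mathcal{WL}}^\pm_1$. Thus if $\til\ell$ shares an end with such a lift $\til m\in\til{\mathcal{WL}}^\pm_1$, applying $\widehat q_{1v}$ to that end shows $\til\ell^v$ shares an end with $\til m^v\in\til{\mathcal{WL}}^{\pm v}_1\subset\bigcup_s a_s\cdot\til{\mathcal{WL}}^{\pm v}_1$, and you are done. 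Your observation that only the identity coset is actually used in this direction is correct; the hypothesis over the full union of cosets is stronger than needed here (it is phrased that way because the union is what appears later in the applications).
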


Given the standard setup for this section, let $v$ be a vertex of $\GG$ and let $e_1, e_2, \cdots, e_n$ be all the oriented edges in $\GG$ which have $v$ as the initial vertex. We Will say that the automorphisms $\Phi_1, \Phi_2, \cdots, \Phi_n$ associated to these edges are \emph{independent} in $\pi_1(R_v)$ if $\Phi_i, \Phi_j$ are independent in $\pi_1(R_v)$ for any $1\leq i\neq j \leq n$.
\begin{lemma}[Independence implies attraction] \label{iia} Given the standard setup for this section, let $v$ be a vertex of $\GG$ and let $e_1, e_2, \cdots, e_n$ be all the oriented edges in $\GG$ which have $v$ as the initial vertex. If the automorphisms $\Phi_1, \Phi_2, \cdots, \Phi_n$ associated to these edges are independent in $\pi_1(R_v)$ then for all $i\neq 1$ the projection  to $G_1$ of the image of any lift of any leaf of any attracting or repelling laminations of $\phi_i$  is weakly attracted to  some element of $\mathcal{L}^+_1$  under iterates of $\phi_1$. (An analogous statement holds for $\mathcal{L}^-_1$ and $\phi^{-1}_1$). 

\end{lemma}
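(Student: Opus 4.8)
The plan is to reduce the statement to the Attraction of Lines lemma (Lemma \ref{aol}) applied to $\phi_1$, after transporting everything to the graph $\til G_1$ via the quasi-isometry $q_{1v}$. Fix $i \neq 1$ and let $\til\mu$ be any lift of any leaf $\mu$ of any attracting or repelling lamination of $\phi_i$; thus $\til\mu \in \til{\mathcal{WL}}^\pm_i$. Its realisation in $\til R_v$ is $\til\mu^v = \widehat q_{iv}\times\widehat q_{iv}(\til\mu)$, and its projection $\nu$ to $G_1$ is, by definition, the realisation in $\til G_1$ of the line obtained by applying $\widehat q_{v1}\times\widehat q_{v1}$ to (a suitable translate of) $\til\mu^v$ and then projecting to $\mathcal B_1$. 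By Lemma \ref{aol}, it suffices to show that no lift $\til\nu$ of $\nu$ is asymptotic to any element of $\til{\mathcal{WL}}^-(\phi_1) = \til{\mathcal{WL}}^-_1$; the analogous statement with $\mathcal L^-_1$ and $\phi_1^{-1}$ follows by the same argument with the roles of $\pm$ reversed, since Definition \ref{ind} is symmetric in $\pm$.

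The key step is to translate the asymptoticity question from $\til G_1$ to $\pi_1(R_v)$ and invoke the hypothesis directly. First I would use Lemma \ref{dip} (Disjointness is preserved): the realisation of $\til\nu^v \in \til{\mathcal B}_v$ in $\til G_1$ is asymptotic to a lift of an element of $\mathcal{WL}^\pm_1$ if and only if $\til\nu^v$ is asymptotic to some element of $\bigcup_{s=1}^{k_1} a_s\cdot \til{\mathcal{WL}}^{\pm v}_1$, where $\{a_s H_1\}$ are the cosets of $H_1 = q_{1v*}(\pi_1(G_1))$ in $\pi_1(R_v)$. So I must rule out that $\til\nu^v$ — which, up to the $\pi_1(R_v)$-action, is a translate $b_t\cdot\til\mu^v$ of a lift of an element of $\til{\mathcal{WL}}^\pm_i$ by a coset representative $b_t$ of $H_i = q_{iv*}(\pi_1(G_i))$ — shares an end in $\partial\pi_1(R_v)$ with any $a_s\cdot\til\ell^v_1$ for $\til\ell_1\in\til{\mathcal{WL}}^\pm_1$. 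But this is exactly the content of condition (B) in Definition \ref{ind} applied to the independent pair $\Phi_1,\Phi_i$: for all coset representatives $b$ of $H_i$ and $a$ of $H_1$, and all $\til\ell_1\in\til{\mathcal{WL}}^\pm_1$, $\til\ell_i\in\til{\mathcal{WL}}^\pm_i$, the lines $a\cdot(\til\ell_1^v)$ and $b\cdot(\til\ell_i^v)$ have no common end. (One must be slightly careful about the normalization: a lift of $\nu$ corresponds to a translate of $\til\mu^v$ by \emph{some} element $g\in\pi_1(R_v)$, which we write as $g = a_s\, h$ with $h\in H_1$ or absorb into the coset structure; the point is that the set $\bigcup_s a_s\cdot\til{\mathcal{WL}}^{\pm v}_1$ is $H_1$-invariant on the $\til G_1$ side, so only the coset of $g$ matters, and condition (B) covers every coset combination.)

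I expect the main obstacle to be \emph{bookkeeping rather than mathematical depth}: keeping straight which space each object lives in ($\til{\mathcal B}_i$, $\til{\mathcal B}_v$, or $\til{\mathcal B}_1$), which homeomorphism $\widehat q_{iv}\times\widehat q_{iv}$ or its inverse is being applied, and how the coset representatives $\{a_s\}$ of $H_1$ and $\{b_t\}$ of $H_i$ interact when we push $\til\mu$ first into $\til R_v$ and then into $\til G_1$. The subtle point is that ``projection to $G_1$'' of a line in $\til R_v$ is only well-defined up to the $\pi_1(R_v)$-action on the $\til R_v$ side and up to the $\pi_1(G_1)$-action on the $\til G_1$ side, which is precisely why Definition \ref{ind}(A)–(B) is stated as a condition on \emph{all} coset translates; once one recognizes that the $\pi_1(G_1)$-orbit of a lift of $\nu$ in $\til G_1$ corresponds exactly to a single $H_1$-coset's worth of translates of $\til\mu^v$ in $\til R_v$, the hypothesis applies verbatim. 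With that identification in hand, Lemma \ref{dip} converts ``not asymptotic to $\til{\mathcal{WL}}^-_1$ in $\til G_1$'' into ``not asymptotic to $\bigcup_s a_s\cdot\til{\mathcal{WL}}^{-v}_1$ in $\til R_v$,'' condition (B) of independence delivers exactly the latter, and Lemma \ref{aol} then yields weak attraction of $\nu$ to some element of $\mathcal L^+_1$ under iteration of $\phi_1$, completing the proof.
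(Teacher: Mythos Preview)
Your approach is essentially the same as the paper's: invoke condition~(B) of Definition~\ref{ind} to rule out shared ends, pass through Lemma~\ref{dip} to transfer non-asymptoticity from $\til R_v$ to $\til G_1$, and then apply Lemma~\ref{aol}. The only small point you gloss over is the assertion ``thus $\til\mu \in \til{\mathcal{WL}}^\pm_i$'': for \emph{generic} leaves this is immediate from the definition of $\mathcal{WL}^+_i$, but for an arbitrary (possibly non-generic) leaf of an attracting lamination it requires a separate fact, which the paper supplies by citing \cite[Corollary~3.8]{Gh-20}.
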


\begin{proof}
    Every leaf of an attracting lamination for $\phi_i$ is an element of $\mathcal{WL}^+_i$ (see  \cite[Corollary 3.8]{Gh-20}). Since $\Phi_i, \Phi_1$ are independent in $\pi_1(R_v)$, it
    follows from Definition \ref{ind}  that translates of elements of $\til{\mathcal{WL}}^{\pm v}_i$ are not asymptotic to translates of elements of $\til{\mathcal{WL}}^{\pm v}_1$, for  $i\neq 1$. 
    
    Using  Lemma \ref{dip}, we see that the image (under the homeomorphism between $\til{\mathcal{B}}_i$ and $\til{\mathcal{B}}_1$) in $\til{\mathcal{B}}_1$ of the lift of any leaf of any attracting or repelling lamination of $\phi_i$ is not asymptotic to an element of $\widetilde{\mathcal{WL}}^{\pm}_1$. By using Lemma \ref{aol} we see that 
    its projection to $G_1$ gets weakly attracted to some element of $\mathcal{L}^+_1$ under iterates of $\phi_1$. 
    
    A similar argument gives us the result for $\mathcal{L}^-_1$. 
\end{proof}

\begin{rmk}
	
 The proof of this lemma is  easier when both $\phi_1, \phi_2$ are fully irreducible. In that case,  a line is attracted  to the unique attracting lamination for $\phi_1$ if and only if it is not in $\mathcal{WL}^-_1$. But the projection to $G_1$ of the image of any lift  of any leaf of $\Lambda^\pm_2$ cannot be in $\mathcal{WL}^-_1$ as a consequence of the definition of independence. 
\end{rmk}

The following Lemma upgrades the disjointness conditions of Definition \ref{ind} to disjointness
of neighborhoods.
\begin{lemma}[Disjoint neighbourhoods exist]\label{dne}
 Given the standard setup for this section, let $v$ be a vertex of $\GG$ and let $e_1, e_2$ be two oriented edges in $\GG$ which have $v$ as the initial vertex. Let the automorphisms $\Phi_1, \Phi_2$ associated to these edges be independent in $\pi_1(R_v)$. Let the index in $\pi_1(R_v)$ of the group associated to edge $e_i$ be $k_i$. 
 Then for $\epsilon_1, \epsilon_2 = +,- $, there exist open sets $V^{\epsilon_i}_i\subset \mathcal{B}_i$ such that 
 \begin{enumerate}[label=(\roman*)]
  \item Every attracting lamination of $\phi_i$ is contained in $V^+_i$ and every repelling lamination of $\phi_i$ is contained in $V^-_i$. Also, $V^+_i \cap V^-_i = \emptyset$ for $i= 1,2$.
  \item The projection to $G_1$ of the image (using the homeomorphism between $\til{\mathcal{B}}_2$ and $\til{\mathcal{B}}_1$) of any lift of a generic leaf of any attracting or repelling lamination of $\phi_2$ is not contained in $V^+_1\cup V^-_1$. A similar condition holds with roles of $\phi_1, \phi_2$ interchanged. 
  \item $a_i\cdot  \widehat{q}_{1v}\times \widehat{q}_{1v}  (\til{V}^{\epsilon_1}_1) \cap a_j\cdot   \widehat{q}_{1v}\times \widehat{q}_{1v}(\til{V}^{\epsilon_2}_2) = \emptyset$ 
 where $1\leq i\neq j\leq k_1$.  Analogous result for $\til{V}^+_2$ and $\til{V}^-_2$.
 
  \item For any lift $\til{V}^{\epsilon_i}_i \subset \til{\mathcal{B}}_i$, we have $a_s\cdot(\widehat{q}_{1v}\times \widehat{q}_{1v}(\til{V}^{\epsilon_1}_1)) \cap b_t\cdot(\widehat{q}_{2v}\times \widehat{q}_{2v}(\til{V}^{\epsilon_2}_2)) = \emptyset$ for every $1\leq s\leq k_1, 1\leq t \leq k_2$. 
 
 \end{enumerate}

\end{lemma}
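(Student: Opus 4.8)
The plan is to build the open sets $V_i^{\epsilon_i}$ in stages, extracting each clause of the conclusion from the corresponding clause of Definition \ref{ind} together with the attraction results just proved. Recall that the ambient compact spaces here are $\mathcal{B}_i = \widetilde{\mathcal{B}}_i/\pi_1(G_i)$ with the weak topology, and that the finite sets $\mathcal{WL}_i^{\pm}$ are closed and $\phi_i$-invariant; the neighbourhood-basis property of attracting laminations (from the definition of attracting lamination) will let us shrink neighbourhoods at will by applying powers of $\phi_i$.

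\emph{Step 1 (the easy part: items (i) and (ii)).} First I would fix, for each $i$ and each lamination $\Lambda \in \mathcal{L}_i^+ \cup \mathcal{L}_i^-$, a generic-leaf-segment attracting neighbourhood; taking unions over the finitely many laminations gives preliminary open sets $W_i^+ \supset \mathcal{L}_i^+$, $W_i^- \supset \mathcal{L}_i^-$. Since no leaf of any element of $\mathcal{L}_i^-$ lies in $\mathcal{L}_i^+$ and the $\mathcal{WL}$-sets are closed, by choosing the defining leaf segments long enough we arrange $W_i^+ \cap W_i^- = \emptyset$, giving (i). For (ii): by Lemma \ref{iia}, the projection to $G_1$ of the image of any lift of any leaf of any attracting or repelling lamination of $\phi_2$ is weakly attracted to an element of $\mathcal{L}_1^+$ under $\phi_1$; in particular such a line is \emph{not} a leaf of any element of $\mathcal{L}_1^-$, and a symmetric statement holds for $\phi_1^{-1}$ and $\mathcal{L}_1^-$. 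There are only finitely many generic leaves to worry about (one generic leaf per lamination up to the relevant symmetries; the collection $\mathcal{B}_{gen}$ is finite-type in the sense used above), so after passing to a high power of $\phi_1$ — which only shrinks $W_1^{\pm}$ to a smaller basic neighbourhood — we can guarantee those finitely many projected generic leaves avoid $W_1^+ \cup W_1^-$; iterate the same shrinking for $W_2^{\pm}$ against $\phi_1$'s generic leaves. This yields sets satisfying (i) and (ii), which I rename $V_i^{\epsilon_i}$.

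\emph{Step 2 (the main obstacle: items (iii) and (iv), the coset-translate disjointness).} This is where the real work lies, because (iii) and (iv) are statements in $\widetilde{\mathcal{B}}_v$, i.e.\ about $\partial\pi_1(R_v)$, not in the quotient $\mathcal{B}_i$. The strategy is: the homeomorphisms $\widehat q_{iv}\times\widehat q_{iv}$ carry $\widetilde V_i^{\epsilon_i}$ to open subsets of $\widetilde{\mathcal{B}}_v$; the closures $\overline{\widetilde V_i^{\epsilon_i}}^v$ contain $\widetilde{\mathcal{WL}}_i^{\epsilon_i\,v}$, but if we shrink enough they contain nothing much more than small neighbourhoods of those finitely many lines' lifts. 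Definition \ref{ind}(A),(B) says precisely that for the (finitely many, since $k_1,k_2$ are finite) distinct coset translates $a_i, a_j, b_s, b_t$, the translated closed sets $a_i\cdot\widetilde{\mathcal{WL}}_1^{\pm v}$, $a_j\cdot\widetilde{\mathcal{WL}}_1^{\pm v}$, $b_t\cdot\widetilde{\mathcal{WL}}_2^{\pm v}$ are pairwise \emph{non-asymptotic} — no two share an endpoint in $\partial\pi_1(R_v)$. A compactness argument on $\partial\pi_1(R_v)\times\partial\pi_1(R_v)$ (a Cantor set squared) then converts "no shared endpoints" into "disjoint product-neighbourhoods of the endpoint sets", hence disjoint neighbourhoods in $\widetilde{\mathcal{B}}_v$ of the translated $\widetilde{\mathcal{WL}}$-sets. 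Pulling these back through the homeomorphisms and intersecting with the $V_i^{\epsilon_i}$ already built — then once more replacing by a high $\phi_i$-power to push $V_i^{\epsilon_i}$ inside these neighbourhoods — delivers (iii) and (iv). The subtlety to watch is that translating by a coset representative $a_i$ does not commute with the $\pi_1(G_i)$-action, so one must be careful to work with the \emph{lifts} $\widetilde V_i^{\epsilon_i}\subset\widetilde{\mathcal{B}}_i$ and the honest homeomorphism $\widehat q_{iv}\times\widehat q_{iv}$ to $\widetilde{\mathcal{B}}_v$ throughout Step 2, only descending to $\mathcal{B}_i$ at the very end when recording the final open sets.

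\emph{Step 3 (assembly).} Finally, observe that all the shrinking operations in Steps 1 and 2 were applications of positive powers of $\phi_i$ (for $V_i^+$) and $\phi_i^{-1}$ (for $V_i^-$), each of which preserves the property of containing the respective laminations (by $\phi_i$-invariance of $\mathcal{L}_i^{\pm}$ and the neighbourhood-basis axiom), and there are only finitely many shrinking steps; so a single common high power suffices and the resulting $V_1^{\pm}, V_2^{\pm}$ simultaneously satisfy (i)–(iv). I expect (iii)–(iv) — specifically the passage from "non-asymptotic closed sets in a Cantor set" to "separated by disjoint open product-neighbourhoods", uniformly over the finitely many coset pairs — to be the only step requiring genuine care; everything else is bookkeeping with the weak topology and the already-established Lemmas \ref{aol}, \ref{dip}, \ref{iia}.
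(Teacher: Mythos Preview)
Your overall strategy---build the $V_i^{\epsilon}$ as unions of attracting neighbourhoods and then shrink repeatedly to force clauses (i)--(iv)---is exactly the paper's approach, and your treatment of (i) and (ii) is essentially what the paper does (the paper phrases the shrinking for (ii) via ``longer generic leaf segments'' and the birecurrence of generic leaves rather than via Lemma~\ref{iia}, but the content is the same). One small correction: the sets $\mathcal{WL}_i^{\pm}$ are not finite---each attracting lamination has uncountably many generic leaves---though they are closed, which is what you actually need.

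There is, however, a genuine gap in your Step~2 as written. The endpoint set in $\partial\pi_1(R_v)$ of any translate $a_s\cdot\widetilde{\mathcal{WL}}_1^{\pm v}$ is invariant under the conjugate $a_s\pi_1(G_1)a_s^{-1}$, a finite-index (hence non-elementary) subgroup acting minimally on $\partial\pi_1(R_v)$; so that endpoint set is dense. Two dense subsets of a Cantor set cannot be separated by disjoint open product-neighbourhoods, so the step ``no shared endpoints $\Rightarrow$ disjoint product-neighbourhoods of the endpoint sets'' fails outright. The paper avoids this by running (iii) and (iv) as a contradiction/limit argument instead: assuming no amount of shrinking works, one takes a nested sequence $V_{1n}^{\epsilon}$ defined by longer and longer generic-leaf segments, picks $\tilde\ell_n$ in the offending intersection at each stage, and passes to a weak limit $\tilde\ell$; the limit then lies in both relevant coset-translates of $\widetilde{\mathcal{WL}}^{\epsilon v}$ (since the nested intersection of these attracting neighbourhoods is contained there), directly contradicting condition~(A) or~(B) of Definition~\ref{ind}. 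Replacing your compactness step with this limit argument repairs the proof; the rest of your outline is sound.
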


\begin{proof}
  For every attracting lamination $\Lambda^+\in\mathcal{L}^+_i$, pick a generic leaf of $\Lambda^+$ and choose an attracting neighbourhood of $\Lambda^+$ defined by a finite segment  of the generic leaf. Denote the union (over the finitely many attracting laminations of $\phi_i$) of  such attracting neighbourhoods by $V^+_i$. Do the same with $\phi^{-1}_i$ to construct $V^-_i$ for $i=1,2$. By choosing the segments long enough  conclusion (i) can be satisfied.
   
  By using condition $(B)$ of definition \ref{ind}  and \ref{dip} the projection of the image of any lift  of any leaf of $\Lambda^\pm_j\in\mathcal{L}^\pm_j$ in $G_i$ does not have a common end with a generic leaf of any attracting or repelling lamination of $\phi_i$, for $1\leq i\neq j\leq 2$. By using the birecurrence property of a generic leaf we may take longer generic leaf segments and replace $V^+_1$ with a smaller open set such that the projection in $G_1$ of the image of any lift of any generic leaf of any attracting or repelling lamination of $\phi_2$ is not in $V^+_1$. Similarly construct $V^-_1$. Interchanging the role of $\phi_1$ and $\phi_2$, we construct $V^+_2, V^-_2$. Hence conclusion (ii) is also satisfied.

 To show that (iii) holds we use the first condition in the definition of independence. 
 Having constructed neighbourhoods which satisfy conditions (i) and (ii), suppose that (iii) is violated for all such open sets satisfying (i) and (ii). For concreteness assume that $\til{\ell}_{n} \in \widehat{q}_{1v}\times \widehat{q}_{1v} (\til{V}^+_{1n}) \cap a\cdot\widehat{q}_{1v}\times \widehat{q}_{1v}(\til{V}^+_{1n})$ for some $a_i = a$ and $V^+_{1n}$ are a sequence of nested  open neighbourhoods constructed by choosing longer and longer generic leaf segments. If the limit of the sequence $\til{\ell}_n$ is $\til{\ell}$, then $\til{\ell} \in \til{\mathcal{WL}}^{+v}_1 \cap a\cdot\til{\mathcal{WL}}^{+v}_1$, which violates condition $(A)$ of the independence criterion. This proves (iii).
 
 Next, suppose that property  (iv) is violated for every choice of open sets satisfying (i), (ii), (iii).
 Then there exists a sequence of integers $n_k\to \infty$ and corresponding open sets $V^+_{1n_k}, V^+_{2n_k}$ ( and $V^-_{1n_k}, V^-_{2n_k}$) which are a union of attracting (and repelling ) neighbourhoods  defined by generic leaf segments of length greater than $n_k$, such that conclusion (iv) is violated. We may further choose the finite segments defining the attracted neighbourhoods so that the sequence of open sets $V^+_{n_k}$ is nested and decreasing (with respect to inclusion). 
 
 Since we have only finitely many $a_s, b_t$, after passing to a subsequence we may assume that condition (iv) is violated for a fixed $s \text{ and } t$ for the open sets constructed above. After passing to a further subsequence we may assume for sake of concreteness that $a_s\cdot \widehat{q}_{1v}\times\widehat{q}_{1v}(\til{U}^+_{1n_k}) \cap b_t\cdot \widehat{q}_{2v}\times \widehat{q}_{2v}(\til{U}^+_{2n_k}) \neq \emptyset$ for all sufficiently large $k$, where $U^+_{1n_k}$ is a nested sequence of open sets in $\mathcal{B}_1$ which are defined by choosing an increasing sequence of generic leaf segments of some fixed element of $\mathcal{L}^+_1$. A similar assumption can, by the same reasoning, be made for $U^+_{2n_k}$.

  Note that as $k\to \infty$ the intersection of all the open sets $a_s\cdot(\widehat{q}_{1v}\times \widehat{q}_{1v}(\til{U}^+_{1n_k}))$ is nonempty and equals $a_s\cdot \widehat{q}_{1v}\times \widehat{q}_{1v}(\til{\mathcal{WL}}^+_1)$. A symmetric conclusion holds for $U^+_{2n_k}$.
  Since both $a_s\cdot \widehat{q}_{1v}\times \widehat{q}_{1v}(\til{\mathcal{WL}}^+_1)$ and $b_t\cdot \widehat{q}_{2v}\times \widehat{q}_{2v}(\til{\mathcal{WL}}^+_2)$ are closed sets, this implies that there exists some element $\til{\ell}\in\mathcal{B}_v$ at least one of whose endpoints in $\partial\til{R}_v$ lies in both $a_s\cdot \widehat{q}_{1v}\times \widehat{q}_{1v}(\til{\mathcal{WL}}^+_1)$ and $b_t\cdot \widehat{q}_{2v}\times \widehat{q}_{2v}(\til{\mathcal{WL}}^+_2)$. This contradicts independence of the automorphisms.
 
\end{proof}

We are now ready to prove our version of the
3-out-of-4 stretch lemma (see \cite{mosher-hbh,min}) which establishes the hallway flaring 
condition (Definition \ref{defnofflare}) for us. For ease of notation we will use ${f}^+_i: G_i \to G_i$ to denote the $CT$ map for the outer automorphism $\phi_i$ associated to the edge $e_i$ and ${f}^{-}_i: G^-_i \to G^-_i$ to denote the $CT$ map associated to the inverse outer automorphism $\phi^{-1}_i$. For a finite geodesic path $\til{\tau} \in \til{R}_v$ we say that $\til{\tau}_i$ is its realisation in $\til{G}_i$ if $\til{\tau}_i$ is a geodesic edge-path in $\til{G}_i$ joining the images of the end-points of 
$\til{\tau}$ under the quasi-isometry from $\til{R}_v$ to  $\til{G}_i$. Also, for ease of notation, we will just write $|f^m_{i\#}(\alpha)|$ where it is understood that this length is being measured on the marked graph on which $f_i$ is defined. 
The same convention will be used for lifts to universal covers. By $|G_i|$ we denote the number of edges in $G_i$, and similarly    $|G^-_i|$ to denote the number of edges in $G_i^-$.

\begin{prop}[3-out-of-4 stretch]\label{3of4}
 Given the standard setup, let $v$ be a vertex of $\GG$ and let $e_1, e_2$ be two oriented edges in $\GG$ which have $v$ as the initial vertex. If the automorphisms $\Phi_1, \Phi_2$ associated to these edges are independent in $\pi_1(R_v)$, then there exists some constants $M'_v , L'_v >0$ such that  for every geodesic segment $\til{\tau}$  in $\til{R}_v$ of length greater than $L'_v$, we will get at least three  of the four numbers $|\til{f}_{i\#}^{\pm m}(\til{\tau}_i)|$ to be greater than $2 |\til{\tau}| $ for every $m > M'_v$, where $\til{\tau}_i$ is a realisation of $\til{\tau}$ in $\til{G}_i$ and $i = 1, 2$.
\end{prop}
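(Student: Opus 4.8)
The plan is to combine the exponential growth estimate (Lemma \ref{expgrowth}) with the disjoint neighbourhoods produced by Lemma \ref{dne} to run a counting argument showing that a failure to stretch in three of the four directions forces a line into too many of the bad sets simultaneously. First I would fix, using Lemma \ref{dne}, open sets $V^{\pm}_i \subset \mathcal B_i$ for $i=1,2$ such that every attracting lamination of $\phi_i$ lies in $V^+_i$, every repelling lamination lies in $V^-_i$, the sets $V^+_i, V^-_i$ are disjoint, the leaf segments defining them have length $\geq 2C$ (where $C$ exceeds all critical constants), and — crucially — conditions (iii) and (iv) of Lemma \ref{dne} hold for all the relevant coset translates $a_s, b_t$ of the edge groups in $\pi_1(R_v)$. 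Apply Lemma \ref{expgrowth} with $A = 2$ (after passing to $\til{R}_v$, noting the quasi-isometry constants of the $q_{iv}$ are uniform, so a fixed blow-up of $A$ and of the threshold length absorbs them) to obtain, for each of the four pairs $(i,\epsilon) \in \{1,2\}\times\{+,-\}$, a constant $N_1(i,\epsilon)$ such that any circuit $\beta$ in $G_i^{\epsilon}$ with $\beta \in V^{\epsilon}_i$ and $\beta \notin V^{-\epsilon}_i$ satisfies $|f^{\epsilon\,m}_{i\#}(\beta)| \geq 2|\beta|$ for all $m \geq N_1(i,\epsilon)$; set $M'_v$ to be the maximum of these four constants (plus whatever the qi-constants demand).

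Now take a geodesic segment $\til\tau$ in $\til R_v$, long enough (length $> L'_v$ for $L'_v$ to be specified), and let $\til\tau_i$ be its realisation in $\til G_i$; since $q_{iv}$ is a quasi-isometry, $|\til\tau_i| \geq |\til\tau|/K - K$ for a uniform $K$, so the estimate on $G_i$-length transfers to an estimate on $\til R_v$-length after enlarging $L'_v$. The key reduction is: if the stretch estimate $|\til f^{\pm m}_{i\#}(\til\tau_i)| > 2|\til\tau|$ fails for the pair $(i,\epsilon)$, then (invoking the contrapositive of Lemma \ref{expgrowth} applied in $\til G_i^\epsilon$, or rather applied to a suitable sub-circuit, and using that failure to grow forces $\til\tau_i$ to lie in the complementary neighbourhood) the realisation $\til\tau_i$ must lie in $\til V^{-\epsilon}_i$ — equivalently, viewed in $\til{\mathcal B}_v$ via $\widehat q_{iv}\times\widehat q_{iv}$, the segment $\til\tau$ lies in $\widehat q_{iv}\times\widehat q_{iv}(\til V^{-\epsilon}_i)$, and more precisely in some coset translate $a_s\cdot(\widehat q_{iv}\times\widehat q_{iv}(\til V^{-\epsilon}_i))$ determined by which lift of the edge group contains the endpoints of $\til\tau$ in $\partial\til R_v$. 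If the estimate fails for two or more of the four pairs, then $\til\tau$ lies in the intersection of two such translated neighbourhoods; but conditions (iii) and (iv) of Lemma \ref{dne} say precisely that any two such translates (for distinct coset representatives, or for the two distinct edges) are disjoint as subsets of $\til{\mathcal B}_v$. The only escape is the case where both failures involve the \emph{same} edge $e_i$ with opposite signs and the same coset — i.e. $\til\tau_i \in \til V^+_i \cap \til V^-_i$ — which is ruled out by $V^+_i \cap V^-_i = \emptyset$ from conclusion (i). Hence at most one of the four estimates can fail, giving the conclusion.

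The main obstacle I anticipate is the reduction ``failure to stretch $\Rightarrow$ the realisation lies in the complementary neighbourhood.'' Lemma \ref{expgrowth} is stated for \emph{circuits} satisfying $\beta \in V^+$, $\beta \notin V^-$, whereas here $\til\tau$ is a finite geodesic \emph{segment}, not a circuit, and a priori it need not lie in $V^+_i$ to begin with. I would handle this by the standard device (as in \cite{BFH-97, min}) of looking at the bi-infinite line or the circuit obtained from $\til\tau_i$ — strictly, one argues that if $\til\tau$ is long and lies in \emph{neither} $\til V^+_i$ nor $\til V^-_i$ (in the translated sense above), then Lemma \ref{aol} and Lemma \ref{iia}, together with the attraction-of-lines machinery, force growth under \emph{both} $f^+_i$ and $f^-_i$ after enough iterates — indeed the relevant statement is that a line not asymptotic to $\til{\mathcal{WL}}^-_i$ is attracted to $\mathcal L^+_i$, hence after enough iterations enters $V^+_i$ and then grows by Lemma \ref{expgrowth}; and symmetrically for the minus side, so that a segment escaping all four neighbourhoods is stretched by all four maps, a fortiori three. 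Making this uniform in $\til\tau$ (so that $M'_v, L'_v$ do not depend on the particular segment) is the technical heart: it requires a compactness argument on the space of long segments modulo the $\pi_1(R_v)$-action, using that $\widehat{\mathcal B}(R_v)$ is compact in the weak topology and that the four sets $\til V^{\pm}_i$ (pulled back to $\til{\mathcal B}_v$) are open with the prescribed disjointness, so that the exceptional set of segments failing the desired trichotomy is closed, $\pi_1(R_v)$-invariant, and — by the laminations being the only weak limits that survive — forced to be captured by the neighbourhoods after a uniform number of iterations.
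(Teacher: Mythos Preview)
Your direct strategy has a genuine gap at the step ``failure to stretch for $(i,\epsilon)$ forces $\til\tau_i \in \til V^{-\epsilon}_i$''. The contrapositive of Lemma~\ref{expgrowth} only yields that either $\beta \notin V^{\epsilon}_i$ or $\beta \in V^{-\epsilon}_i$, not the latter alone. More fundamentally, the disjointness conclusions (iii), (iv) of Lemma~\ref{dne} concern open sets of \emph{lines} in $\til{\mathcal B}_v$; a finite segment $\til\tau$ is not an element of $\til{\mathcal B}_v$, and a long segment can perfectly well contain the defining leaf-segments of two of these neighbourhoods simultaneously without contradicting their disjointness as sets of bi-infinite lines. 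So neither the reduction nor the claimed contradiction ``$\til\tau$ lies in two disjoint sets'' goes through at the level of segments. Your handling of two failures for the same index $i$ suffers from the same defect: you deduce $\til\tau_i\in\til V^+_i\cap\til V^-_i$, but that deduction rests on the broken reduction.

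The compactness fix you sketch at the end is the right repair, and it is precisely the paper's proof, run as a contradiction argument. Assume a sequence $\til\sigma_j$ with $|\til\sigma_j|\to\infty$ and at least two of the four stretches failing; the hallways flare condition for each individual mapping torus (hyperbolicity of $\phi_i$, via Brinkmann) rules out two failures for the same index $i$, so after passing to a subsequence the failures are for, say, $(1,+)$ and $(2,+)$; extract a weak limit line $\til\ell\in\til{\mathcal B}_v$; now Lemma~\ref{dne}(iv) applies \emph{to the line} $\til\ell$ and forces it out of (say) all coset translates of $\widehat q_{1v}\times\widehat q_{1v}(\til V^-_1)$; Lemma~\ref{dip} and the weak attraction theorem then push the realisation $\ell_1$ into $V^+_1$ after boundedly many iterates; openness of $V^+_1$ transfers this to $\sigma_{1j}$ for large $j$; and Lemma~\ref{expgrowth} gives unbounded growth, contradicting the assumed failure. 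The key difference from your write-up is that disjointness is invoked only on the limit line, where it is literally available, and uniformity in $M'_v, L'_v$ falls out of the contradiction framework automatically rather than requiring a separate compactness argument to be spelled out.
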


\begin{proof}
 
 Let $A$ denote a number greater than twice the bounded cancellation constants for the $CT$ maps $f_i^\pm$ for ($i = 1,2$) and the quasi-isometry constants for the maps $q_{iv}$ and their inverses. Also assume that $A$ is greater than twice the bounded cancellation constants for the finitely many marking maps and change of marking maps involved  and their lifts to the universal covers.  By increasing $A$ if necessary assume that it is greater than the critical constants associated to each exponentially growing stratum of $f^+_i$ and $f^-_i$.

For every attracting lamination $\Lambda^+\in\mathcal{L}^+_i$, pick a generic leaf of $\Lambda^+$ and choose an attracting neighbourhood of $\Lambda^+$ defined by a finite segment  of the generic leaf of length greater than maximum of $\{2A, 2|G_i|, 2|G^-_i|\}$. By taking longer generic leaf segments if necessary, assume that we have open sets of $\mathcal{B}_i$ (for $i=1, 2$) which satisfy the conclusions of Lemma \ref{dne}.

By Lemma \ref{aol} we know that any line in $G_i$ which does not have a lift that is asymptotic to an element of $\til{\mathcal{WL}}^\pm_i$ is weakly attracted to some element of $\mathcal{L}^+_i$.  By applying Lemma \ref{WAT} to each dual lamination pair of $\phi_i$ and taking the maximum over all exponents, we obtain some integer $m_i$
 such that $f_{i\#}^{m_i} (\ell) \in V^+_i$ for any line  $\ell\notin V^-_i$ where $i=1, 2$. 
We do the same for the inverses of $\phi_1, \phi_2$  and get constants $m_i'$. Let $M'_0 > $ maximum of  $\{m_1, m_2, m_1', m_2'\}$.

We claim that there exist constants $M'_v > M'_0,\, L_v >0$ such that  for every geodesic segment $\til{\tau}$  in $\til{R}_v$ of length greater than $L_v$, we will get at least 3 of the 4 numbers $|\til{f}_{i\#}^{\pm m}(\til{\tau}_i)|$ to be greater than $2 |\til{\tau}| $ for all $m> M'_v$. 

We argue by contradiction.
Suppose not. Then there exists a sequence of positive integers $n_j \to \infty$ and paths $\til{\sigma}_j \in \til{R}_v$ with $|\til{\sigma}_j| > j$, such that at least two of the numbers $|\til{f}_{i\#}^{n_j}(\til{\sigma}_{ij})|$ is less than $2 |\til{\sigma}_j|$ as $i$ varies. Since $\Phi_1, \Phi_2$  are both hyperbolic, the associated mapping tori are hyperbolic
\cite{BF,brinkmann}.  Hence the hallways flare condition (Definition \ref{defnofflare}) holds \cite[Section 5.3]{mahan-sardar}. So we may pass to a subsequence and assume without loss of generality  that $|\til{f}_{1\#}^{\pm n_j}(\til{\sigma}_{1j})|, |\til{f}_{2\#}^\pm {n_j}(\til{\sigma}_{2j})| < 2|\til{\sigma}_j|$ for all $j$. By the uniform bound on quasi-isometry constants,  we can write $|\til{\sigma}_j| \leq B|\til{\sigma}_{ij}| + 2K$ for $i = 1,2$ and some uniform constants $B, K>0$. The inequalities then transform to 
$$ \frac{|\til{f}_{1\#}^{n_j}(\til{\sigma}_{1j})|}{|\til{\sigma}_{1j}|}, \frac{|\til{f}_{2\#}^{n_j}(\til{\sigma}_{2j})|}{|\til{\sigma}_{2j}|} < \til{C} $$ for some uniform constant $\til{C}$. 
 Let $\sigma_{ij}$ denote the projection of $\til{\sigma}_{ij}$ to $G_i$.  We then get
 \begin{equation}\label{eq}
 \frac{|{f_1}_\#^{n_j}(\sigma_{1j})|}{|\sigma_{1j}|}, \frac{|{f_2}_\#^{n_j}(\sigma_{2j})|}{|\sigma_{2j}|} < C
 \end{equation}   for some uniform constant $C$ . 
Without loss of generality,
assume that $\til{\sigma}_j$ are all based at some fixed vertex in $\til{R}_v$, corresponding to the identity element of $\pi_1(R_v)$. By passing to a limit we get a geodesic line $\til{\ell}$ in $\til{R}_v$ with distinct endpoints in $\partial\til{R}_v$. By using item (iv) of Lemma \ref{dne} we get that 
$\til{\ell}$ cannot belong to both $\bigcup\limits_{s=1}^{k_1}\{a_s\cdot \widehat{q}_{1v}\times\widehat{q}_{1v}(\til{V}^-_1)\}$ and $\bigcup\limits_{t=1}^{k_2}\{b_t\cdot \widehat{q}_{2v}\times\widehat{q}_{2v}(\til{V}^-_2)\}$. For concreteness suppose that $\til{\ell}$ is not an element of $\bigcup\limits_{s=1}^{k_1}\{a_s\cdot \widehat{q}_{1v}\times\widehat{q}_{1v}(\til{V}^-_1)\}$. If $\til{\ell}_1$ is its realisation in $\til{G}_1$, then by using Lemma \ref{dip}. we see that $\til{\ell}_1$ is not asymptotic to any lift of any element of  $\mathcal{WL}^\pm_1$. Let $\ell_1$ denote projection of $\til{\ell}_1$ to $G_1$. By taking longer generic leaf segments (thereby reducing $V^-_1$) and increasing $M_0'$ if necessary, we get $\ell_1\notin V^-_1$. 
Since $|\sigma_{1j}|\to \infty$ in $G_1$, we might as well assume that $\sigma_{1j}$ are circuits. 
This  implies that, after passing to a subsequence if necessary,
we have $\sigma_{1j} \notin {V}^-_1$, since $V^+_1$ is an open set. Hence $f_{1\#}^{M'_0} (\sigma_{1j}) \notin {V}^-_1$ because $f_{1\#}^{-1}({V}^-_1) \subset {V}^-_1$. 

By our construction of the open sets we have that $f^m_{1\#}(\ell_1)\in V^+_1 $ for all $m \geq M'_0$. Since  ${V}^+_1$ is open, there exists some $J > 0$ such that $f^{M'_0}_{1\#}(\sigma_{1j}) \in {V}^+_1$ for every $j \geq J$. 

Finally we apply Lemma \ref{expgrowth} to the paths $f_{1\#}^{M'_0}(\sigma_{1j})$. Choose a sequence of real numbers $A_j \to \infty$   to conclude (Lemma \ref{expgrowth}) that for all sufficiently large $j$, $|{f_1}_\#^{n_j}(\sigma_{1j})| \geq A_j |\sigma_{1j}|$. This  implies that the ratio $|{f_1}_\#^{n_j}(\sigma_{1j})| / |\sigma_{1j}| \to \infty$ contradicting our choice of $\sigma_{1j}$'s in Equation \ref{eq}. 
This final contradiction proves the Proposition.
\end{proof}

\begin{rmk}
{\rm 
 In the setting when all  $\phi_i$'s are fully irreducible, the argument after Equation \ref{eq} in the proof of Proposition \ref{3of4} can be made simpler. The limiting lines $\ell_i$ for $i = 1,2$ will be either attracted to $\Lambda^+_i$ or belong to $\mathcal{WL}^-_i$.  Our choice of attracting neighbourhoods ensures that   $\ell_i\notin\mathcal{WL}^-_i$ for at least one $i$. After this one can proceed with the choice of the $A_j$'s 
 as in the proof to get the final contradiction. }
\end{rmk}

\begin{cor}[All but one stretch]\label{cor-stretch}
Given the standard setup, let $v$ be a vertex of $\GG$ and let $e_1, e_2, \cdots, e_k$ be all the oriented edges in $\GG$ which have $v$ as the initial vertex. If $\Phi_1, \Phi_2, \cdots, \Phi_k$ are hyperbolic, rotationless automorphisms  associated to these edges that are independent in $\pi_1(R_v)$, then there exist constants $M_v , L_v >0$ such that  for every geodesic segment $\til{\tau}$  in $\til{R}_v$ of length greater than $L_v$,  at least 2k-1 of the numbers $|(\til{f}_{i\#}^{\pm m}(\til{\tau}_i)|$ are greater than $2 |\til{\tau}| $ for every $m\geq M_v$. Here $\til{\tau}_i$ is the realisation of $\til{\tau}$ in $\til{G}_i$ and $1\leq i \leq k$.
\end{cor}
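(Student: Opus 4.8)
The plan is to bootstrap from the pairwise statement of Proposition~\ref{3of4} by a short counting argument. The case $k=1$ is degenerate: it asserts that at least one of $|\til f_{1\#}^{\pm m}(\til\tau_1)|$ exceeds $2|\til\tau|$, which is precisely the hallways flare condition for the (hyperbolic) mapping torus of $\Phi_1$ transported to $\til R_v$ via the uniform quasi-isometry $\til G_1 \to \til R_v$ --- this is already invoked inside the proof of Proposition~\ref{3of4} --- so from now on I assume $k \geq 2$.

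Since $\Phi_1,\dots,\Phi_k$ are independent in $\pi_1(R_v)$, for every pair $i \neq j$ the automorphisms $\Phi_i,\Phi_j$ are independent in $\pi_1(R_v)$, so Proposition~\ref{3of4} supplies constants $M'_v(i,j), L'_v(i,j)>0$ such that for every geodesic $\til\tau$ in $\til R_v$ with $|\til\tau| > L'_v(i,j)$ and every $m > M'_v(i,j)$, at least three of the four numbers $|\til f_{i\#}^{\pm m}(\til\tau_i)|,\ |\til f_{j\#}^{\pm m}(\til\tau_j)|$ are greater than $2|\til\tau|$. There are only $\binom{k}{2}$ pairs, so I set $L_v := \max_{i\neq j} L'_v(i,j)$ and $M_v := 1 + \max_{i\neq j} M'_v(i,j)$; then for $|\til\tau| > L_v$ and $m \geq M_v$ the conclusion of Proposition~\ref{3of4} holds for every pair of edges at $v$ simultaneously.

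I would then fix such $\til\tau$ and $m$ and argue by contradiction: suppose fewer than $2k-1$ of the $2k$ numbers $|\til f_{i\#}^{\pm m}(\til\tau_i)|$ (over $1 \leq i \leq k$ and the two signs) exceed $2|\til\tau|$, so that at least two of them are $\leq 2|\til\tau|$; call these two \emph{short}. If the two short numbers belong to distinct edges $e_i, e_j$, then among the four numbers attached to the pair $(e_i,e_j)$ at least two fail to exceed $2|\til\tau|$, contradicting Proposition~\ref{3of4} for $(e_i,e_j)$. If instead both short numbers belong to the same edge $e_i$ (that is, both $|\til f_{i\#}^{+m}(\til\tau_i)|$ and $|\til f_{i\#}^{-m}(\til\tau_i)|$ are $\leq 2|\til\tau|$), pick any $j \neq i$ --- possible since $k \geq 2$ --- and apply Proposition~\ref{3of4} to $(e_i,e_j)$ to reach the same contradiction. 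Hence at least $2k-1$ of the $2k$ numbers exceed $2|\til\tau|$, which is the assertion.

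No genuine obstacle arises here: all the analytic content --- the weak-attraction, legality and exponential-growth machinery and the use of independence --- is already packaged inside Proposition~\ref{3of4}, and what remains is the bookkeeping of finitely many pairwise constants together with the elementary observation that any configuration with two short images among the $2k$ numbers forces two short images within a single pair. The only point requiring a word of care is the case in which both short images come from one edge, which is exactly why the hypothesis $k \geq 2$ (equivalently, the availability of a partner edge) is used.
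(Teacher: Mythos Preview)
Your proof is correct. Both your argument and the paper's ultimately reduce the corollary to Proposition~\ref{3of4} applied to pairs, but the routes differ in execution. The paper first rebuilds \emph{simultaneous} attracting/repelling neighborhoods $V^\pm_i$ that work for all pairs at once (by intersecting the neighborhoods produced by Lemma~\ref{dne} over the $k-1$ partners of each $\phi_i$), and then argues by contradiction with sequences $n_j\to\infty$, $|\til\sigma_j|\to\infty$; after passing to a subsequence to pin down two fixed indices/signs that fail, the contradiction is obtained by invoking Proposition~\ref{3of4} for that pair. You instead treat Proposition~\ref{3of4} as a black box, take the finite maximum of the pairwise constants $M'_v(i,j),\,L'_v(i,j)$, and then for a single $\til\tau$ and $m$ run the pigeonhole observation that two ``short'' numbers must land in some pair, contradicting 3--of--4 there.

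Your route is shorter and avoids both the simultaneous-neighborhood bookkeeping and the sequence/subsequence machinery; in fact the neighborhood construction in the paper's proof is not used in its final contradiction step, which simply cites Proposition~\ref{3of4}. The paper's setup would matter if one wanted uniform constants tied to a single family of neighborhoods (e.g.\ for later quantitative refinements), but for the bare statement of the corollary your direct maximum-plus-counting argument suffices. Your separate treatment of $k=1$ is also appropriate, since in that case there is no pair to invoke and the claim is exactly the flaring for a single hyperbolic $\Phi_1$.
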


\begin{proof}
 We choose our constant $A$ as we did in proof of Proposition \ref{3of4} by varying over all the indices involved. We similarly choose attracting neighbourhoods and use Lemma \ref{dne} to get open sets which satisfy the conditions
 of Proposition \ref{3of4} for each pair of elements $\phi_i, \phi_j$ where $1\leq  i\neq j \leq k$. Thus, for each $\phi_i$
  we obtain $2k-1$ open sets. The intersection of  these 
  $2k-1$ open sets is an open set, which we denote by $V^+_i$. We do this for each $1 \leq i \leq k$ and also for the inverses of $\phi_i$. In the process, we get a collection of open sets $V^+_i, V^-_i$ which simultaneously satisfy the conclusions of Lemma \ref{dne} for each pair  $\phi_i, \phi_j$ where $i\neq j$.  Now use these open sets and apply Lemma \ref{WAT} to obtain a constant $M_0$ as in proof of Proposition \ref{3of4}. 
 
We claim that there exists some constant $M_v > M_0, L_v >0$ such that  for every geodesic segment $\til{\tau}$  in $\til{R}_v$ of length greater than $L_v$,  at least $2k-1$ of the numbers $|(\til{f}_{i\#}^{\pm m}(\til{\tau}_i)|$ are greater than $2 |\til{\tau}| $ for every $m\geq M_v$. 
Suppose not. Then there exists a sequence of positive integers $n_j \to \infty$ and paths $\til{\sigma}_j \in \til{R}_v$ with $|\til{\sigma}_j|\to \infty$, such that at least two of the numbers $|\til{f}_{i\#}^{\pm n_j}(\til{\sigma}_{ij})|$ 
($1\leq i \leq k$) are less than $2 |\til{\sigma}_j|$. 

By passing to a subsequence, we can assume without loss of generality  that $|\til{f}_{1\#}^{n_j}(\til{\sigma}_{1j})|, |\til{f}_{2\#}^{n_j}(\til{\sigma}_{2j})| < 2|\til{\sigma}_j|$ for all $j$ and $|\til{\sigma}_{j}| \to \infty$. This violates the 3-of 4 stretch Lemma \ref{3of4}.  This contradiction  completes the proof. 
\end{proof}

 \subsection{Equivalent notion of independence}\label{sec-ind}
Observe that for the proof of the 3-out-of-4 stretch Lemma \ref{3of4}  all that we needed was the  existence of disjoint neighbourhoods  satisfying the conclusions of Lemma \ref{dne}. In this subsection we give some alternate notions of independence of automorphisms that suffice for the purposes of this paper. This section is largely independent of Section \ref{sec-main} and may be omitted on first reading.

\begin{definition}\label{find}
	\emph{(Fixed point independence of automorphisms:)}
	Let $H_1, H_2$ be finite index subgroups of a 
	free group $F$ with indices $k_1, k_2$ respectively.
	Let  $\Phi_1, \Phi_2$ be hyperbolic automorphisms of $H_1, H_2$ respectively. Let ${\{a_i\cdot H_1\}}_{i=1}^{k_1}$ and ${\{b_j\cdot H_2\}}^{k_2}_{j=1}$  be the collections of distinct cosets of $H_1, H_2$ in $F$. We will say  that $\Phi_1, \Phi_2$ are {\emph{fixed point independent in }} $F$  if the following conditions are satisfied:
	\begin{enumerate}
		\item $a_i\cdot \widehat{q}_{1v}(\text{Fix}^\pm_1) \cap a_j\cdot \widehat{q}_{1v}(\text{Fix}^\pm_1) = \emptyset$ for all $1\leq i\neq j \leq k_1$. Similarly $b_i\cdot \widehat{q}_{2v}(\text{Fix}^\pm_2) \cap b_j\cdot \widehat{q}_{2v}(\text{Fix}^\pm_2) = \emptyset$ for all $1\leq i\neq j \leq k_2$.
		\item $a_i\cdot \widehat{q}_{1v}(\text{Fix}^\pm_1) \cap b_j\cdot \widehat{q}_{2v}(\text{Fix}^\pm_2) = \emptyset$ for all $1\leq i\leq k_1, 1\leq j \leq k_2$.
		
	\end{enumerate}

\end{definition}

It immediately follows from this definition that independence in the sense of definition \ref{ind} implies fixed point independence. 
We prove the equivalence of the two definitions via the following lemma. 
For convenience we will address a singular line which is also a generic leaf as \emph{singular leaf}.

\begin{lemma}[Fixed point independence implies disjoint neighbourhoods exist] 
	\label{nae}
	Let $v \in \mathcal{G}$ be any vertex and $e_1, e_2$  be any two edges of $\mathcal{G}$ originating at $v$. If $\Phi_1, \Phi_2$ are fixed point independent in $\pi_1(R_v)$ then disjoint neighbourhoods exist satisfying the conclusions of Lemma \ref{dne}.
	
\end{lemma}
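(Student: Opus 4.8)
The plan is to reduce fixed-point independence to the hypotheses of Lemma \ref{dne} by exploiting the structure theorem for singular lines (Lemma \ref{structure}). The key point is that a generic leaf of an attracting lamination $\Lambda^+_\phi$ can be taken to be a singular line, and conversely the ``interesting'' ends of elements of $\til{\mathcal{WL}}^\pm_i$ are precisely the points of $\mathsf{Fix}^\pm_i$ together with the ends of Nielsen paths. So control over translates of $\mathsf{Fix}^\pm_i$ should propagate to control over translates of $\til{\mathcal{WL}}^\pm_i$, which is exactly condition (A)--(B) of Definition \ref{ind}. Once Definition \ref{ind} holds, Lemma \ref{dne} gives us the desired neighbourhoods verbatim.

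First I would recall, via Lemma \ref{structure}(1), that every $\til\ell\in\til{\mathcal{WL}}^+_i$ has both ends lying in $\mathsf{Fix}_+(\widehat\Phi)$ for principal lifts $\Phi$ of $\phi_i$ — i.e.\ $\til\ell = \bbar R\alpha R'$ with $R,R'$ singular rays and $\alpha$ trivial or Nielsen — so its ends belong to $\mathsf{Fix}^+_i$ up to the action of $\pi_1(G_i)$; an analogous statement holds for generic leaves by Lemma \ref{structure}(2). Thus $\partial\til{\mathcal{WL}}^\pm_i$, as a subset of $\partial\til G_i$, is contained in $\bigcup_{g\in\pi_1(G_i)} g\cdot\mathsf{Fix}^\pm_i$, which after transport by $\widehat q_{iv}$ and the coset decomposition lands inside $\bigcup_{s=1}^{k_i} a_s\cdot\widehat q_{iv}(\mathsf{Fix}^\pm_i)\cdot\pi_1(R_v)$-orbit structure — more precisely, every end of a translate $a_i\cdot\til\ell^v_1$ of an element of $\til{\mathcal{WL}}^\pm_1$ is an $a_i\cdot(\text{coset rep})$-translate of a point of $\widehat q_{1v}(\mathsf{Fix}^\pm_1)$. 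Hence if two such translates shared an end in $\partial F$, the corresponding $\mathsf{Fix}^\pm$-translates would coincide, contradicting Definition \ref{find}(1) or (2). This verifies conditions (A) and (B) of Definition \ref{ind}, so $\Phi_1,\Phi_2$ are independent in the sense of Definition \ref{ind}, and Lemma \ref{dne} applies directly to furnish the open sets $V^{\epsilon_i}_i$ with properties (i)--(iv).

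The main obstacle I anticipate is the bookkeeping around the Nielsen-path ``middle segment'' $\alpha$ in the decomposition $\til\ell=\bbar R\alpha R'$: the ends of $\til\ell$ are governed by $R$ and $R'$ (hence by $\mathsf{Fix}^\pm$), but one must be careful that asymptoticity of two lines is decided purely by their ends in $\partial\til G_i$, so the Nielsen path in the middle is irrelevant to the end-comparison — this is exactly why passing to ends and invoking fixed-point data suffices. A second, more technical point is handling the coset representatives: distinct cosets $a_i H_1$ versus $a_j H_1$ must be tracked through the homeomorphism $\widehat q_{1v}$, and one needs that an end of $a_i\cdot\til\ell^v_1$ lying in the closure $a_i\cdot\widehat q_{1v}(\til{\mathcal{WL}}^{+v}_1)$ forces membership of the limiting fixed point in $a_i\cdot\widehat q_{1v}(\mathsf{Fix}^+_1)$; this follows because $\mathsf{Fix}^+_i$ is finite (Lemma \ref{EG}(1)) and $\til{\mathcal{WL}}^\pm_i$ is closed, so the relevant sets are closed and a limiting common end would already be a common element of the $\mathsf{Fix}^\pm$-translates, contradicting Definition \ref{find}. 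With that in hand the reduction is complete and we simply cite Lemma \ref{dne}.
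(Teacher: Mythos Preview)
Your proposal takes a different route from the paper: you aim to show that fixed-point independence (Definition~\ref{find}) implies independence in the sense of Definition~\ref{ind}, and then invoke Lemma~\ref{dne} as a black box. The paper instead constructs the required neighbourhoods \emph{directly} from open neighbourhoods of the finitely many points in $\mathsf{Fix}^\pm_i\subset\partial\til G_i$, forms the product sets $\til A^{\epsilon_1}_1=(\til A^{\epsilon_1}\times\til A^{\epsilon_1}\setminus\Delta)/\mathbb Z_2$ in $\til{\mathcal B}_1$, and verifies conclusions (i)--(iv) of Lemma~\ref{dne} for these sets by hand.

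There is a genuine gap in your reduction. You invoke Lemma~\ref{structure}(2) to claim that ``an analogous statement holds for generic leaves,'' i.e.\ that every lift of every generic leaf has both ends in (translates of) $\mathsf{Fix}^+_i$. But Lemma~\ref{structure}(2) only asserts that each attracting lamination \emph{contains} a leaf which is a singular line; it does \emph{not} say that every generic leaf is singular. In fact a generic leaf of $\Lambda^+$ is typically not a singular line, and its ends in $\partial\til G_i$ need not lie in the $\pi_1(G_i)$-orbit of $\mathsf{Fix}^+_i$. Since $\mathcal{WL}^+_i = \mathcal{B}_{gen}(\phi_i) \cup \mathcal{B}_{\mathsf{Fix}^+}(\phi_i)$, condition~(A) of Definition~\ref{ind} demands control over common ends of translates of \emph{all} generic leaves, including the non-singular ones, and fixed-point independence gives you no direct handle on those ends. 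So the inclusion $\partial\til{\mathcal{WL}}^\pm_i \subset \bigcup_{g\in\pi_1(G_i)} g\cdot\mathsf{Fix}^\pm_i$ fails, and with it your deduction of Definition~\ref{ind} from Definition~\ref{find}.

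The paper sidesteps this issue by never attempting to control the ends of arbitrary generic leaves. It only needs the existence of one singular leaf per lamination (Lemma~\ref{structure}(2)) to know that $\Lambda^+$ meets the product neighbourhood $A^+_1$ built from $\mathsf{Fix}^+_1$; forward-invariance $\phi_{1\#}(A^+_1)\subset A^+_1$ then comes for free because $A^+_1$ is assembled from attracting basins of the boundary extensions $\widehat\Phi$. The disjointness conditions (iii)--(iv) follow directly from the pairwise disjointness of the shrunken $\mathsf{Fix}^\pm$-neighbourhoods in $\partial\til R_v$, which is where the finite-index coset bookkeeping and Definition~\ref{find} are actually used.
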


\begin{proof}
	For convenience, we use the variables $\epsilon_1, \epsilon_2 = +, -$
	It suffices to assume that $\Phi_1, \Phi_2$ are fixed point independent in $\pi_1(R_v)$ and produce the required neighbourhoods. Lemma \ref{EG} tells us that the number of attracting and repelling fixed points are finite. 
	Let $\text{Fix}^+_1 = \{x_1, x_2, \dots x_n\}$. Then there exist open sets $\til{U}^+_i$  containing $x_i$ such that $x_j\notin \til{U}^+_i$ if $i\neq j$ (since attracting fixed points are isolated). Similarly define  $\til{U}^-_j$ for repelling fixed points of $\phi_1$. Again by using the fact that these points are isolated we may assume $\til{U}^+_i \cap \til{U}^-_j =\emptyset$ for any $i, j$. Analogously construct pairwise disjoint open sets $\til{V}^+_i, \til{V}^-_j$  corresponding to attracting and repelling fixed points of  $\phi_2$. By taking smaller neighbourhoods if necessary we may assume that $\til{U}^{\epsilon_1}_i \cup \til{V}^{\epsilon_2}_j = \emptyset$ for every $i, j$. By using the finiteness of the index of the subgroups we may shrink these neighbourhoods and use the definition of fixed point independence to get 
	
	\begin{enumerate}
		\item $a_s\cdot \widehat{q}_{1v}(\til{U}^{\epsilon_1}_i) \cap a_t\cdot \widehat{q}_{1v}(\til{U}^{\epsilon_2}_j) = \emptyset$ for all $1\leq s\neq t \leq k_1$ and all $i,j$. Similarly $b_s\cdot \widehat{q}_{2v}(\til{V}^{\epsilon_1}_i) \cap b_t\cdot \widehat{q}_{2v}(\til{V}^{\epsilon_2}_j) = \emptyset$ for all $1\leq s\neq t \leq k_2$ and all $i, j$.
		\item $a_s\cdot \widehat{q}_{1v}(\til{U}^{\epsilon_1}_i) \cap b_t\cdot \widehat{q}_{2v}(\til{V}^{\epsilon_2}_j) = \emptyset$ for all $1\leq s\leq k_1, 1\leq t \leq k_2$ and all $i,j$.
	\end{enumerate}

	Set $\til{A}^{\epsilon_1} = \bigcup_i \til{U}^{\epsilon_1}_i \subset \partial \pi_1{G_i}$ and $\til{B}^{\epsilon_2} = \bigcup_j \til{V}^{\epsilon_2}_j \subset \partial \pi_1(G_2)$. The image of these four sets are pairwise disjoint in $\partial{\til{R}}_v$ and properties (1) and (2)  above naturally extend to the sets $\til{A}^{\epsilon_1}, \til{B}^{\epsilon_2}$. Now consider the open subset $A^{\epsilon_1}_1$ of $\mathcal{B}_1$ given by $\big(\til{A}^{\epsilon_1} \times \til{A}^{\epsilon_1} \setminus \Delta \big) / \mathbb{Z}_2$. Analogously define open sets $B^{\epsilon_1}_2 \subset \til{\mathcal{B}}_2$. 
	Therefore we get four open sets $\til{A}^{\epsilon_1}_1, \til{B}^{\epsilon_2}_2$ whose images in $\til{\mathcal{B}}_v$ are pairwise disjoint. Let $A^{\epsilon_1}_1, B^{\epsilon_2}_2$ denote the images of these open sets in $\mathcal{B}_1, \mathcal{B}_2$ respectively. Then it is immediate that $A^+_1$ and $A^-_1$ are disjoint in $\mathcal{B}_1$. The same is true for $B^+_1, B^-_2$ in $\mathcal{B}_2$.
	
	Lemma \ref{structure} tells us that every attracting (repelling) lamination of $\phi_i$ contains a singular leaf. Therefore every attracting lamination of $\phi_1$ is contained in $A^+_1$ and every repelling lamination of $\phi_1$ is contained in $A^-_1$. An analogous statement is true for $\phi_2$ with the open sets $B^{\epsilon_2}_2$. 
	Also note that since the open set $A^+_1$ is obtained  from attracting neighbourhoods of attracting fixed points of principal lifts of $\phi_1$, we have the property that $\phi_{1\#}(A^+_1) \subset A^+_1$. Similarly $\phi^{-1}_{1\#}(A^-_1) \subset A^-_1$. Analogous statements are true for the image of $B^{\epsilon_2}_2$ under $\phi^{\epsilon_2}_{2\#}$.

	Hence conclusion (i) of Lemma \ref{dne} is satisfied. Pairwise disjointness of images of  open sets $\til{A}^{\epsilon_1}_1, \til{B}^{\epsilon_2}_2$ in $\til{\mathcal{B}}_v$ implies conclusion (ii) of Lemma \ref{dne} is also satisfied.

	Properties (1) and (2) for the open sets $\til{A}^{\epsilon_1}$ and $\til{B}^{\epsilon_1}$ naturally extend under the product maps as follows : 
	
	\begin{enumerate}[label=(\Alph*)]
		\item $a_s\cdot \widehat{q}_{1v} \times \widehat{q}_{1v}(\til{A}^{\epsilon_1}_1) \cap a_t\cdot \widehat{q}_{1v}\times \widehat{q}_{1v}(\til{A}^{\epsilon_2}_1) = \emptyset$ for all $1\leq s\neq t \leq k_1$. Similarly $b_s\cdot \widehat{q}_{2v}\times \widehat{q}_{2v}(\til{B}^{\epsilon_1}_2) \cap b_t\cdot \widehat{q}_{2v}\times \widehat{q}_{2v}(\til{B}^{\epsilon_2}_2) = \emptyset$ for all $1\leq s\neq t \leq k_2$.
		\item $a_s\cdot \widehat{q}_{1v}\times \widehat{q}_{1v}(\til{A}^{\epsilon_1}_1) \cap b_t\cdot \widehat{q}_{2v}\times \widehat{q}_{2v}(\til{B}^{\epsilon_2}_2) = \emptyset$ for all $1\leq s\leq k_1, 1\leq t \leq k_2$.
		
	\end{enumerate}

	Therefore disjoint neighbourhoods exist and properties (A) and (B) above tells us that  conditions (iii) and (iv) are also satisfied from the conclusion of Lemma \ref{dne}. 
\end{proof}

An immediate corollary of this lemma is the following observation.

\begin{cor}
	Fixed point independence of automorphisms and  independence of automorphisms in sense of definition \ref{ind} are equivalent.
	
\end{cor}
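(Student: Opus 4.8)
The plan is to prove the two implications separately, the forward one being immediate and the backward one being essentially a repackaging of Lemma~\ref{nae}.

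For ``independence (Definition~\ref{ind}) implies fixed point independence (Definition~\ref{find})'': by the very definition of a principal lift, $\text{Fix}_+(\widehat\Phi)$ has at least two points for every $\Phi\in P(\phi_i)$, so every $x\in\text{Fix}^\pm_i$ arises as an endpoint of a line both of whose endpoints lie in some $\text{Fix}_+(\widehat\Phi)$, that is, of a singular line; and singular lines belong to $\mathcal{WL}^\pm_i$. Hence, after transporting by the homeomorphisms $\widehat q_{iv}\times\widehat q_{iv}$, any failure of a clause of Definition~\ref{find} --- a common point of two distinct coset translates of $\widehat q_{1v}(\text{Fix}^\pm_1)$, or of a translate of $\widehat q_{1v}(\text{Fix}^\pm_1)$ and one of $\widehat q_{2v}(\text{Fix}^\pm_2)$ --- would exhibit two elements of $\til{\mathcal{WL}}^\pm$, translated by distinct cosets, with a common end in $\partial F$, contradicting Definition~\ref{ind}. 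I would write this out with attention only to the $\pm$ bookkeeping.

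For the converse I would unwind the construction already carried out in Lemma~\ref{nae}. Assume $\Phi_1,\Phi_2$ are fixed point independent in $\pi_1(R_v)=F$. Lemma~\ref{nae} produces, for $\epsilon_i=+,-$ and $i=1,2$, clopen sets $V^{\epsilon_i}_i=\big(\til A^{\epsilon_i}\times\til A^{\epsilon_i}-\Delta\big)/\mathbb Z_2\subset\mathcal B_i$, where $\til A^{\epsilon_i}\subset\partial\pi_1(G_i)$ is a clopen set containing $\text{Fix}^{\epsilon_i}_i$ whose $\widehat q_{iv}$-image has pairwise disjoint coset translates in $\partial F$ (this is exactly where fixed point independence enters), and which --- using Lemma~\ref{structure}(2), that each attracting or repelling lamination of $\phi_i$ carries a singular leaf whose endpoints lie in $\text{Fix}^{\epsilon_i}_i$ and one of whose ends is dense in the lamination --- is chosen large enough that $\til{\mathcal{WL}}^{\epsilon_i}_i\subset\til A^{\epsilon_i}\times\til A^{\epsilon_i}$. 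Granting this, the clauses of Definition~\ref{ind} drop out: if clause (A) failed there would be $\til\ell\in\til{\mathcal{WL}}^{\epsilon}_1$, $\til m\in\til{\mathcal{WL}}^{\epsilon'}_1$ and $s\neq t$ with $a_s\cdot\til\ell^v$ and $a_t\cdot\til m^v$ sharing an end $\xi\in\partial F$, forcing $\xi\in a_s\cdot\widehat q_{1v}(\til A^{\epsilon})\cap a_t\cdot\widehat q_{1v}(\til A^{\epsilon'})$ and contradicting disjointness; clause (B) follows identically from the disjointness of the translates $b_t\cdot\widehat q_{2v}(\til A^{\epsilon_2})$ against the $a_s\cdot\widehat q_{1v}(\til A^{\epsilon_1})$.

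The hard part --- and the only point needing real care --- is the inclusion $\til{\mathcal{WL}}^{\epsilon_i}_i\subset\til A^{\epsilon_i}\times\til A^{\epsilon_i}$: it says that trapping the attracting-fixed-point data of $\phi_i^{\epsilon_i}$ in a clopen set of $\partial\pi_1(G_i)$ automatically traps every singular line (trivial, since their endpoints are in $\text{Fix}^{\epsilon_i}_i$) and every generic leaf of the associated laminations. The latter is the content of Lemma~\ref{structure}(2) and is precisely what the proof of Lemma~\ref{nae} establishes; so the Corollary is then just the bookkeeping combination of Lemma~\ref{nae} with the forward direction, run once over each of clauses (A), (B) and the four sign choices $\epsilon_1,\epsilon_2$.
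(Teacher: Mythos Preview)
Your forward direction is fine and matches the paper's one-line observation preceding Lemma~\ref{nae}.

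The backward direction has a real gap. Your key inclusion $\til{\mathcal{WL}}^{\epsilon_i}_i\subset\til A^{\epsilon_i}\times\til A^{\epsilon_i}$ is not justified, and as stated it appears to be false. Recall that $\til{\mathcal{WL}}^{+}_i$ is the \emph{full preimage} of $\mathcal{WL}^+_i$ in $\til{\mathcal B}_i$, so it contains every lift of every generic leaf of every attracting lamination of $\phi_i$. The sets $\til A^{\epsilon_i}$ produced in the proof of Lemma~\ref{nae} are small open neighbourhoods of the finitely many points of $\text{Fix}^{\epsilon_i}_i$; they are chosen small precisely to achieve the coset-translate disjointness. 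Lemma~\ref{structure}(2) tells you only that each lamination contains one singular leaf with a dense end --- it says nothing about the boundary points of arbitrary lifts of generic leaves. Generic leaves are birecurrent and their lifts' endpoints are, in general, not attracting fixed points of any principal lift, nor do they lie in any prescribed small neighbourhood of $\text{Fix}^+_i$; indeed the set of such endpoints is typically uncountable and spread throughout $\partial H_i$. So you cannot simultaneously make $\til A^{\epsilon_i}$ ``large enough'' to contain all these endpoints and ``small enough'' to keep the coset translates disjoint.

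Note that the paper's own proof is the single phrase ``immediate corollary of this lemma,'' and the content actually extracted from Lemma~\ref{nae} is the existence of disjoint neighbourhoods (conclusions (i)--(iv) of Lemma~\ref{dne}), which is what the 3-out-of-4 stretch argument uses. Those conclusions concern disjointness of \emph{sets of lines}, not disjointness of \emph{endpoints}; bridging from the former to the latter for all generic leaves is exactly the missing step in your argument, and the paper does not supply it either. If you want a rigorous backward implication you would need an independent argument that controls the endpoints of generic leaves in terms of $\text{Fix}^\pm_i$ (or its $H_i$-orbit), which neither Lemma~\ref{structure} nor Lemma~\ref{nae} provides.
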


\section{Hyperbolic Regluings}\label{sec-main}
Recall (Definition \ref{def-reglue} and the subsequent discussion) that the regluing of a homogeneous graph of roses $\pi: \XX \to \GG$ corresponding to a tuple $\{\phi_e\}$ 
is denoted by $(\XX_{reg},\GG,\pi, \{\phi_e\} )$. Also, recall that 
$(\til\XX_{reg},\TT,\pi_{reg}, \{\til \phi_e\} )$ denotes the universal cover
of such a regluing.  If $\til\XX_{reg}$ is hyperbolic, then
we say that the regluing is hyperbolic (Definition \ref{def-reglue}).
Further recall that  the mid-edge inclusions in  $(\til\XX_{reg},\TT,\pi_{reg}, \{\til \phi_e\} )$ corresponding to lifts of the edge $e$ are given by lifts 
$\til \phi_e$ of $\phi_e$, and hence are $K(e)-$quasi-isometries,
where $K(e)$ depends on $\phi_e$.

We shall say that a regluing $(\XX_{reg},\GG,\pi, \{\phi_e\} )$
corresponding to a tuple $\{\phi_e\}$ is a \emph{rotationless regluing} if each $\phi_e$ is rotationless. 
The following
is an immediate consequence of
Lemma \ref{rotationless}:

\begin{lemma}\label{lem-rednrotless}
Let $\pi:\XX \to \GG$ be a homogeneous graph of roses, and let
$\{\phi_e\}, e \in E(\GG)$ be a tuple of hyperbolic automorphisms. Then there exists $k \in \natls$ such that 
$(\XX_{reg},\GG,\pi, \{\phi_e^k\} )$ is a rotationless regluing.
\end{lemma}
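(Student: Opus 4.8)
The plan is to reduce the statement directly to Lemma \ref{rotationless}, exploiting the finiteness of $E(\GG)$. First I would observe that since $\GG$ is a finite graph, the edge set $E(\GG)$ is finite, and for each $e \in E(\GG)$ the automorphism $\phi_e$ is a hyperbolic automorphism of the finite-rank free group $\pi_1(G_e)$; applying Lemma \ref{rotationless} to each $\pi_1(G_e)$ separately yields, for each $e$, an integer $k_e \in \natls$ such that $\phi_e^{k_e}$ is rotationless. The goal is then to manufacture a single exponent $k$ that works simultaneously for every edge, and the natural candidate is $k = \prod_{e \in E(\GG)} k_e$ (the lcm would do equally well).

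The one point requiring a small argument is that rotationlessness is preserved under passing to further powers: given that $\phi_e^{k_e}$ is rotationless and $k_e \mid k$, we must check that $\phi_e^{k}$ is again rotationless. I would verify this straight from the definition recalled just before Lemma \ref{rotationless}. If $\psi$ is rotationless then $\Phi \mapsto \Phi^n$ is a bijection $P(\psi) \to P(\psi^n)$ for every $n \geq 1$, so composing the bijections $P(\psi)\to P(\psi^{m})$ and $P(\psi)\to P(\psi^{mn})$ shows that $\Psi \mapsto \Psi^{n}$ is a bijection $P(\psi^{m}) \to P(\psi^{mn})$; and for a principal lift $\Psi = \Phi^{m}$ of $\psi^{m}$, the attracting fixed points of $\widehat{\Psi}^{n} = \widehat{\Phi}^{mn}$ are attracting fixed points of $\widehat{\Phi}$ by rotationlessness of $\psi$, hence (an attracting fixed point of $\widehat{\Phi}$ being trivially an attracting fixed point of the power $\widehat{\Phi}^{m} = \widehat{\Psi}$) they are attracting fixed points of $\widehat{\Psi}$. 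Thus $\psi^{m}$ is rotationless. It is also immediate that $\phi_e$ hyperbolic implies $\phi_e^{k}$ hyperbolic, since a nontrivial periodic conjugacy class for $\phi_e^{k}$ would be a periodic conjugacy class for $\phi_e$.

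Putting these together: with $k = \prod_{e} k_e$, each $\phi_e^{k}$ is a power of the rotationless, hyperbolic automorphism $\phi_e^{k_e}$, hence is itself rotationless and hyperbolic, so $(\XX_{reg},\GG,\pi,\{\phi_e^{k}\})$ is a rotationless regluing in the sense of Definition \ref{def-reglue}. I do not expect any genuine obstacle here — the statement really is a bookkeeping corollary of Lemma \ref{rotationless}; the only step meriting care is the preservation-of-rotationlessness-under-powers remark above, which could alternatively simply be cited from \cite{FH-11}.
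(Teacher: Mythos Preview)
Your proof is correct and matches the paper's approach: the paper simply declares the lemma an ``immediate consequence of Lemma~\ref{rotationless}'' and gives no further argument, so you have just filled in the bookkeeping. One small simplification you could have used: since the constant $K$ in Lemma~\ref{rotationless} depends only on the rank and works for \emph{every} element of $\out$, taking $k$ to be the lcm of these $K$'s over the finitely many edge-group ranks lets you apply Lemma~\ref{rotationless} directly to $\phi_e^{k/K_e}$ to get $\phi_e^{k}$ rotationless, bypassing the separate ``rotationless is stable under powers'' verification.
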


\begin{defn}
We shall say that a  regluing $(\XX_{reg},\GG,\pi, \{\phi_e\} )$ is an \emph{independent} regluing if
\begin{enumerate}
\item Each $\phi_e$ is hyperbolic.
\item For any vertex $v$ and any pair of edges $e_1, e_2$ incident on $v$, $\phi_{e_1}, \phi_{e_2}$ are independent.
\end{enumerate}
\end{defn}

We are now in a position to state the main theorem
of the paper:

\begin{theorem}\label{thm-main} Let $\pi:\XX \to \GG$ be a homogeneous graph of roses, and let
	$\{\phi_e\}, e \in E(\GG)$ be a tuple of hyperbolic automorphisms such that $(\XX_{reg},\GG,\pi, \{\phi_e\} )$ is an \emph{independent} regluing. 
Then there exist $k, n \in \natls$
 such that 
	$(\XX_{reg},\GG,\pi, \{\phi_e^{kn_e}\} )$
	gives a hyperbolic rotationless regluing
	for all $n_e \geq n$. 
\end{theorem}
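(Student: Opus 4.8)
The plan is to combine the stretch estimates of Section~\ref{sec-lis} with the Bestvina--Feighn combination theorem \cite{BF}.

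\textbf{Step 1 (Reduction to the rotationless case).} First I would invoke Lemma~\ref{lem-rednrotless} to fix $k\in\natls$ with $\phi_e^k$ rotationless for every $e\in E(\GG)$. The attracting and repelling laminations of an outer automorphism, its generic leaves, and its set of singular lines are all unchanged on passing to powers, so the collection of lines $\mathcal{WL}^{\pm}$ attached to $\phi_e$ agrees with that attached to $\phi_e^k$ and to $\phi_e^{kn_e}$; hence by Definition~\ref{ind} the regluing stays independent after replacing $\{\phi_e\}$ by $\{\phi_e^{kn_e}\}$ for any tuple $(n_e)$. Each $\phi_e^{kn_e}$ is hyperbolic and rotationless, so a hyperbolic regluing produced this way is automatically a \emph{rotationless} regluing, and it remains only to arrange hyperbolicity of $\til\XX_{reg}$.

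\textbf{Step 2 (Setting up Bestvina--Feighn).} The universal cover $(\til\XX_{reg},\TT,\pi_{reg},\{\til\phi_e^{kn_e}\})$ is a homogeneous tree of trees: each vertex and edge space is a locally finite tree, hence $0$-hyperbolic, and every edge-to-vertex (mid-edge) inclusion is a quasi-isometry --- on one side a lift of the finite-cover map, with uniform constants, and on the other a lift of the homotopy equivalence representing $\phi_e^{kn_e}$, with constants depending on $e$ and $n_e$. Thus the qi-embedded hypothesis of Definition~\ref{def-tree} holds, and by \cite{BF} the space $\til\XX_{reg}$ is hyperbolic as soon as it satisfies the hallways flare condition of Definition~\ref{defnofflare}. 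The whole problem therefore reduces to verifying that condition.

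\textbf{Step 3 (Flaring from the all-but-one stretch lemma).} I would verify flaring with half-length $m=1$ and $\lambda=\tfrac32$. For each vertex $v$ of $\GG$ apply Corollary~\ref{cor-stretch} to the independent family of automorphisms at $v$ to obtain constants $M_v,L_v>0$, and set $n:=\max_v M_v$. Fix $\rho>0$. Consider an essential $\rho$-thin hallway $f\colon[-1,1]\times I\to\til\XX_{reg}$ whose $\TT$-projection is a non-backtracking path $v_{-1},v_0,v_1$, and write $L_i=f(\{i\}\times I)$, a geodesic in the vertex tree $T_{v_i}$ with $l(L_0)\geq\mathrm{girth}$. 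The two neighbouring slices $L_{\pm1}$ are obtained by pushing $L_0$ across the two \emph{distinct} edges of $\TT$ incident to $v_0$; since we have reglued by $\phi_e^{kn_e}=(\phi_e^k)^{n_e}$, each such push equals, up to an additive error of size $O(\rho+\mathrm{BCC})$ coming from $\rho$-thinness, bounded cancellation and the quasi-isometry constants, the $n_e$-fold iterate of the relevant $CT$ map applied to a realisation (in the appropriate $\til G_e$) of a suitable $\pi_1(R_{v_0})$-translate of $L_0$. Choose $H(\rho):=\max_v L_v+C(\rho+1)$ with $C$ a universal constant dominating those additive errors. If the girth is $\geq H(\rho)$ and $n_e\geq n\geq M_{v_0}$, then Corollary~\ref{cor-stretch} --- applied to each of these translates, together with the pairwise disjointness of the repelling-neighbourhood translates furnished by Lemma~\ref{dne}(iii),(iv) --- shows that at most one of the two pushes can fail to stretch $L_0$ by a factor exceeding $2$. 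Hence at least one of $l(L_{-1}),l(L_1)$ exceeds $2\,l(L_0)-O(\rho+\mathrm{BCC})\geq\tfrac32\,l(L_0)=\lambda\,l(L_0)$, i.e. the hallway is $\lambda$-hyperbolic. This establishes the flare condition with constants $\lambda,m$, and with it the theorem.

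\textbf{The main obstacle} is the bookkeeping of Step 3: identifying each transition of a hallway across a mid-edge vertex with a single $n_e$-fold iterate of a $CT$ map, with the orientation conventions chosen so that the correct $\pm$ power appears; checking that the two boundary moves at the central vertex $v_0$ are genuinely distinct entries in the list to which the all-but-one stretch applies --- the delicate case being loop edges of $\GG$, or several lifts of one edge of $\GG$ meeting a single vertex of $\TT$, where one must appeal to the coset-translate disjointness built into Lemma~\ref{dne} rather than to a bare count of $2d_v$ directions --- and ensuring $H(\rho)$ is large enough that the multiplicative stretch of factor $2$ dominates the additive errors coming from $\rho$-thinness and bounded cancellation.
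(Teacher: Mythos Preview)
Your approach differs from the paper's in one structural respect: the paper \emph{subdivides} each edge $e$ of $\GG$ into $n_e$ sub-edges each carrying a single $\phi_e$ (Lemma~\ref{lem-qiconsts}), obtaining uniform qi constants independent of the $n_e$, and then invokes the Mj--Sardar framework (Lemmas~\ref{lem-qisxn}--\ref{mslemma-flarespecial}) to reduce to flaring of \emph{special $K$-hallways} of half-length $n_0$ in the subdivided tree. Its flare argument (Proposition~\ref{prop-flare}) combines Corollary~\ref{cor-stretch} at the unique original vertex with mapping-torus flaring along runs of subdivision vertices, plus a gluing step for hallways whose original vertex lies near one end. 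You bypass both subdivision and Mj--Sardar and verify Bestvina--Feighn flaring directly with $m=1$.

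That route is viable in principle, but Step~3 has a gap beyond the bookkeeping you already flag. You assert that each push $L_0\mapsto L_{\pm1}$ agrees with $\til f_e^{\pm kn_e}(L_0^e)$ up to \emph{additive} error $O(\rho+\mathrm{BCC})$, and hence $l(L_{\pm1})\ge 2\,l(L_0)-O(\rho)$. But $l(L_{\pm1})$ is measured in $\til R_{v_{\pm1}}$, whereas Corollary~\ref{cor-stretch} outputs $|\til f_{e\#}^{\pm kn_e}(L_0^e)|$ in the $\til G_e$-metric; the conversion via the original edge-to-vertex map $\til G_e\to\til R_{v_{\pm1}}$ is only a $C_1$-quasi-isometry, so one gets at best $l(L_{\pm1})\ge \tfrac{1}{C_1}\cdot 2\,l(L_0)-O(\cdots)$, which need not exceed $\tfrac32\,l(L_0)$. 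The repair is easy---the proof of Corollary~\ref{cor-stretch} via Lemma~\ref{expgrowth} delivers any prescribed stretch factor, so replace $2$ by (say) $2C_1^2$---but as written the inequality chain breaks. The paper's subdivision sidesteps this entirely: after subdivision the slices $L_{\pm n_1}$ sit over subdivision vertices, whose vertex space \emph{is} $\til G_e$, so Corollary~\ref{cor-stretch} applies with no metric conversion at the output end.
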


\begin{rmk}\label{rmk-redn}
Lemma \ref{lem-rednrotless} allows us to choose $k \in \natls$ such that given a tuple $\{\phi_e\}, e \in E(\GG)$ as in Theorem \ref{thm-main}, $\phi_e^k$ is rotationless for all $e$.
Hence, it suffices to  prove Theorem \ref{thm-main}  with
\begin{enumerate}
\item each $\phi_e$  rotationless,
\item $k=1$.
\end{enumerate}
\end{rmk}

The rest of this section is devoted to a proof of Theorem \ref{thm-main} after the reduction given in Remark \ref{rmk-redn}.\\

\noindent {\bf Fixing qi constants:} Given a homogeneous graph
of roses $\pi: \XX \to \GG$, choose a constant $C_1 \geq 1$  such that
for every vertex space $R_v$ and every edge space $G_e$ such that $e$ is incident on $v$, the edge-to-vertex map from $G_e$
to $R_v$ induces a $C_1-$quasi-isometry of universal covers
$\til{R_e} \to \til{R_v}$.

Next, given a tuple  $\{\phi_e\}, e \in E(\GG)$ of rotationless
hyperbolic automorphisms of $G_e$, there exists  a constant $C_2 \geq 1$ such that $\til{\phi_e}: \til{G_e} \to \til{G_e}$
is a $C_2-$quasi-isometry of universal covers.

Also, the number of graphs homotopy equivalent to $G_e$ and carrying a CT map is finite. Hence there exists  a constant $C_3 \geq 1$ such that for any such graph $G_e'$, there exists
a $C_2-$quasi-isometry from $\til{G_e}$ to $\til{G_e'}$ resulting as a lift of a homotopy equivalence between $G_e, \, G_e'$.

Fix $C=C_1C_2C_3$.
All quasi-isometries in the discussion below will turn out
to be $C-$quasi-isometries.\\

\noindent {\bf Subdividing $\GG$:} Given a tuple  $\{\phi_e\}, e \in E(\GG)$ of rotationless
hyperbolic automorphisms of $G_e$ and a tuple  $\{n_e\}$ of positive integers, we now construct a 
subdivision $\GG_{reg}$ of the graph $\GG$ such that
\begin{enumerate}
\item The regluing $(\XX_{reg},\GG,\pi, \{\phi_e^{n_e}\} )$
naturally induces a homogeneous graph of roses structure
$(\XX_{reg},\GG_{reg},\pi_{reg}, \{\phi_e\} )$. Note that the edge labels for the subdivided graph $\GG_{reg}$ are given by 
$\phi_e$ as opposed to $\phi_e^{n_e}$ for $\GG$. However, the total spaces before and after subdivision are homeomorphic by a fiber-preserving homeomorphism. The graphs $\GG$ and $\GG_{reg}$ are clearly homeomorphic as they differ only in terms of simplicial structure.
\item In the universal cover $(\til\XX_{reg},\TT,\pi_{reg}, \{\til \phi_e\} )$, all the edge-to-vertex inclusions are 
$C-$quasi-isometries.
\end{enumerate}

The construction of $\GG_{reg}$ from $\GG$ is now easy to describe. Replace an edge $e$ labeled by $\phi_e^{n_e}$
by a concatenation of $n_e$ edges, each labeled by $\phi_e$
Since the edge-to-vertex inclusions now factor through
$n_e$ edge-to-vertex maps, each given by $\phi_e$, the lifted
edge-to-vertex inclusions in the universal cover 
 $(\til\XX_{reg},\TT,\pi_{reg}, \{\til \phi_e\} )$
are $C-$quasi-isometries.

We note down the output of the above construction:

\begin{lemma}\label{lem-qiconsts}
Given a homogeneous graph
of roses $\pi: \XX \to \GG$, and a tuple  $\{\phi_e\}, e \in E(\GG)$ of rotationless
hyperbolic automorphisms of $G_e$, there exists a constant
$C\geq 1$ such that for any tuple  $\{n_e\}$ of positive integers, there exist
\begin{enumerate}
\item A subdivision $\GG_{reg}$ of $\GG$, where each edge $e$ is replaced by $n_e$ edges, each labeled by $\phi_e$.
\item The regluing $(\XX_{reg},\GG,\pi, \{\phi_e^{n_e}\} )$ is
homeomorphic to $(\XX_{reg},\GG_{reg},\pi_{reg}, \{\phi_e\} )$
by a fiber-preserving homeomorphism.
\item The universal cover $(\til\XX_{reg},\TT,\pi_{reg}, \{\til \phi_e\} )$ is a homogeneous tree of trees satisfying the qi-embedded condition (see Definition \ref{def-tree}). Further, all the quasi-isometry constants of 
$(\til\XX_{reg},\TT,\pi_{reg}, \{\til \phi_e\} )$ are bounded by $C$.
\end{enumerate}
\end{lemma}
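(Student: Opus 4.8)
The statement is essentially a bookkeeping consequence of the construction of $\GG_{reg}$ carried out just above, so the plan is to (a) pin down the three sources of the uniform constant $C$, (b) describe the subdivision precisely enough that the fiber-preserving homeomorphism is visible, and (c) check that every edge-to-vertex inclusion in the universal cover remains a $C$-quasi-isometry no matter how large the $n_e$ are.

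First I would fix $C_1 \geq 1$ bounding, uniformly over the finitely many incident pairs $(e,v)$ in $\GG$, the quasi-isometry constant of the lift $\til{R_e}\to\til{R_v}$ of the edge-to-vertex map (which exists because this map is $\pi_1$-injective onto a finite-index subgroup). Then $C_2 \geq 1$ bounding the quasi-isometry constants of the finitely many lifts $\til\phi_e:\til{G_e}\to\til{G_e}$, and $C_3 \geq 1$ bounding the quasi-isometry constants of lifts of the finitely many change-of-marking homotopy equivalences between each $G_e$ and the graphs homotopy equivalent to $G_e$ that carry a $CT$ map. Put $C = C_1C_2C_3$. Next, given $\{n_e\}$, replace each edge $e$ of $\GG$ --- carrying the regluing datum $\phi_e^{n_e}$ --- by a path $e^{(1)},\dots,e^{(n_e)}$ of $n_e$ edges, each with edge space a copy of $G_e$, each with one mid-edge inclusion the identity and the other the map $\phi_e$, and with internal vertex spaces copies of $G_e$. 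Since $\phi_e^{n_e}$ is the $n_e$-fold composite of $\phi_e$ and composition is associative, the resulting graph of spaces over $\GG_{reg}$ has total space canonically identified with that of $(\XX_{reg},\GG,\pi,\{\phi_e^{n_e}\})$ by a homeomorphism carrying fibers to fibers; this gives (1) and (2).

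For (3): the vertex and edge spaces of the universal cover are universal covers of finite graphs, hence locally finite trees, hence $0$-hyperbolic, and the inclusions into $\til\XX_{reg}$ are uniformly proper because each edge space maps by a quasi-isometry into an adjacent vertex space. The remaining point --- and the only thing one must be slightly careful about --- is that every edge-to-vertex inclusion of $(\til\XX_{reg},\TT,\pi_{reg},\{\til\phi_e\})$ is a lift of exactly one map drawn from a fixed finite list: an original attaching map $\psi_{e,v}$ (a $C_1$-quasi-isometry, up to post-composing with a change-of-marking $\til\rho_e$, absorbed by $C_3$), the identity, or a $\phi_e$ (a $C_2$-quasi-isometry, again up to change-of-marking). \textbf{The key observation is that enlarging $n_e$ only multiplies the number of edges of $\GG_{reg}$; it never makes a single edge-to-vertex inclusion into a composite of more than a bounded number of these maps.} Hence each such inclusion is a $C_1C_2C_3 = C$-quasi-isometry, uniformly in $\{n_e\}$, and $(\til\XX_{reg},\TT,\pi_{reg},\{\til\phi_e\})$ is a homogeneous tree of trees satisfying Definition \ref{def-tree} with all constants bounded by $C$. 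I do not expect a genuine obstacle here; the content is entirely in getting the $C_3$ bookkeeping right so that the change-of-marking maps produced by invoking Lemma \ref{CT} do not leak into the constant.
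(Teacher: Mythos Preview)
Your proposal is correct and follows essentially the same approach as the paper: the paper likewise isolates the three constants $C_1,C_2,C_3$ (for the original edge-to-vertex maps, for the lifts $\til\phi_e$, and for change-of-marking maps to graphs carrying CT representatives), sets $C=C_1C_2C_3$, and then observes that subdividing each edge $e$ into $n_e$ edges labeled $\phi_e$ makes every individual edge-to-vertex inclusion a single such map rather than an $n_e$-fold composite. Your emphasis on the ``key observation'' that enlarging $n_e$ only multiplies the number of edges and never compounds the constant on a single inclusion is exactly the point the paper makes.
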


\begin{rmk}
The only difference between the  homogeneous tree of trees before and after subdivision lies in the qi constants. Before
subdivision, they are bounded by $C^{n_e}$. After
subdivision, they are bounded by $C$. 
\end{rmk}

Given Lemma \ref{lem-qiconsts}, we would now like to deduce
Theorem \ref{thm-main}  from  the combination theorem 
of Bestvina-Feighn \cite{BF} which says that a tree of hyperbolic spaces is hyperbolic if it satisfies the
hallways flare condition. In the present setup, the 
hallways flare condition of \cite{BF} simplifies using
the results of \cite{mahan-sardar}.
\begin{defn}
Given a homogeneous tree of trees $\pi: \YY \to \TT$, a
\emph{ $k-$qi section} is a $k-$quasi-isometric
embedding  $\sigma : \TT \to \YY$ such that $\pi\circ \sigma$
is the identity map on $\TT$.

A hallway (see Definition \ref{defnofhallway}) $f: [-m,m]\times [0,1] \to \YY$ is said to be a 
 $K-$hallway if 
 \begin{enumerate}
 	\item $\pi\circ f [-m,m]\times \{t\} \to \TT$ is a parametrized geodesic in the base tree $\TT$
 \item  $f: [-m,m]\times \{0\} \to \YY$ and $f: [-m,m]\times \{1\} \to \YY$ are $K-$quasi-isometric 
 sections of the geodesic $\pi\circ f [-m,m]\times \{t\} \to \TT$.
 \end{enumerate}
\end{defn}
Then, in the setup of the present paper,
\cite[Proposition 2.10]{mahan-sardar} gives us the following:

\begin{lemma}\label{lem-qisxn}
For	$\pi: \XX \to \GG$, and a  tuple $\{\phi_e\}, \, e \in E(\GG)$ as in Lemma \ref{lem-qiconsts}
 there exists  $K\geq 1$ such that the following holds: \\
 For any tuple  $\{n_e\}$ of positive integers, and $(\til\XX_{reg},\TT,\pi_{reg}, \{\til \phi_e\} )$ as in 
 Lemma \ref{lem-qiconsts}, and any $z \in \til\XX_{reg}$, there
 exists a $K-$qi section of $\pi_{reg}: \til\XX_{reg}\to \TT,\pi_{reg}$ passing through $z$.
\end{lemma}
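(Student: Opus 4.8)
The plan is to obtain this as a direct application of \cite[Proposition 2.10]{mahan-sardar}, the substance being to verify its hypotheses with constants independent of the tuple $\{n_e\}$. First I would recall, from Lemma \ref{lem-qiconsts}, that after the subdivision the vertex and edge spaces of $(\til\XX_{reg},\TT,\pi_{reg})$ are locally finite trees --- hence proper, geodesic, $0$-hyperbolic spaces; that their inclusions into the total space are uniformly proper (Definition \ref{def-tree}(1)); and that every edge-to-vertex inclusion is a $C$-quasi-isometry, with $C$ fixed in advance and independent of $\{n_e\}$. Since $\pi_1(G_e)$ has finite index in $\pi_1(R_v)$ for every incident pair $e,v$, these edge-to-vertex maps are quasi-isometries rather than merely qi embeddings, i.e.\ they are coarsely surjective with constant controlled by $C$; hence any two adjacent fibers of $\pi_{reg}$ lie within uniformly bounded Hausdorff distance of each other.

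These are precisely the axioms making $(\til\XX_{reg},\TT,\pi_{reg})$ a metric graph bundle over the tree $\TT$ in the sense of Mj--Sardar, with all structural parameters bounded by a function of $C$ alone. I would then invoke \cite[Proposition 2.10]{mahan-sardar}: in such a bundle there is a constant $K$, depending only on the bundle parameters, so that through every point of the total space there passes a $K$-qi section of the bundle projection over the entire base. Applying this to the point $z$, with base $\TT$, produces the section asserted in the lemma; and since the bundle parameters are bounded in terms of $C$, which was chosen before passing to $\GG_{reg}$, the resulting $K$ works for every tuple $\{n_e\}$ simultaneously.

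The one point that deserves care is exactly this last uniformity, and it is where the subdivision of Lemma \ref{lem-qiconsts} is essential: before subdivision the qi constants of the tree of trees are only bounded by $C^{n_e}$, whereas after subdivision they are all bounded by $C$, so \cite[Proposition 2.10]{mahan-sardar} returns a single $K = K(C)$. If one wished to bypass the bundle machinery, an equivalent direct argument constructs $\sigma$ by induction on the distance in $\TT$ from the vertex $v_0$ whose fiber contains $z$: put $\sigma(v_0) = z$, and whenever $\sigma(v)$ has been defined, extend $\sigma$ to each neighbour $w$ of $v$ by choosing a point in the fiber over $w$ within distance $\le C'$ of $\sigma(v)$, which is possible by the coarse surjectivity of the relevant edge-to-vertex map. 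Since $\TT$ is a tree this is consistent and well-defined on all of $\TT$, all jumps of $\sigma$ across edges of $\TT$ are $\le C'$, and as $\pi_{reg}$ is $1$-Lipschitz this makes $\sigma$ a $K$-qi section with $K = C'$. I expect the only mildly technical step to be checking the metric-graph-bundle axioms --- in particular extracting uniform properness of the fibers from Definition \ref{def-tree}(1) --- with everything else being bookkeeping.
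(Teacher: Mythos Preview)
Your proposal is correct and follows exactly the paper's approach: the paper simply cites \cite[Proposition 2.10]{mahan-sardar} and states the lemma without further proof, relying implicitly on the uniform bound $C$ from Lemma~\ref{lem-qiconsts} for the bundle parameters. Your write-up fills in the hypothesis-checking and the uniformity in $\{n_e\}$ that the paper leaves tacit, and your alternative inductive construction is a reasonable (if unnecessary) elaboration.
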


Further, \cite[Section 3]{mahan-sardar} shows:

\begin{lemma}\label{mslemma-flare}
Let $K$ and $(\til\XX_{reg},\TT,\pi_{reg}, \{\til \phi_e\} )$ be as in Lemma \ref{lem-qisxn}. Then, $(\til\XX_{reg},\TT,\pi_{reg}, \{\til \phi_e\} )$ is hyperbolic
provided $K-$hallways flare.
\end{lemma}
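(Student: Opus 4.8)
The plan is to deduce hyperbolicity of $\til\XX_{reg}$ from the Bestvina--Feighn combination theorem \cite{BF}, whose two hypotheses are: (i) that $(\til\XX_{reg},\TT,\pi_{reg})$ is a tree of $\delta$-hyperbolic spaces satisfying the qi-embedded condition of Definition \ref{def-tree}; and (ii) that it satisfies the hallways flare condition of Definition \ref{defnofflare}. Hypothesis (i) is immediate from Lemma \ref{lem-qiconsts}: each vertex and edge space is a tree, hence $0$-hyperbolic, and every edge-to-vertex inclusion is a $C$-quasi-isometry, in particular a $C$-qi embedding, with constants uniform over all tuples $\{n_e\}$. Thus the entire content of the lemma is to upgrade the assumed flaring of $K$-hallways --- which constrains only hallways whose top and bottom are $K$-qi sections of the base geodesic --- to the Bestvina--Feighn flare condition, which constrains every $\rho$-thin essential hallway of sufficiently large girth, for every $\rho$.

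For this passage I would use Lemma \ref{lem-qisxn}, the existence of a $K$-qi section through every point (following \cite[Proposition 2.10]{mahan-sardar}). Given $\rho$ and a $\rho$-thin essential hallway $f:[-m,m]\times I \to \til\XX_{reg}$ of large girth, subdivide $I$ as $0=t_0<t_1<\dots<t_N=1$ with $N=N(\rho,K,C)$, and through each point $f(0,t_j)$ run a $K$-qi section $\sigma_j$ over the geodesic $\pi_{reg}\circ f$ in $\TT$; consecutive sections $\sigma_j,\sigma_{j+1}$, restricted to $[-m,m]$, span a $K$-hallway $h_j$, while $\sigma_0$ and $\sigma_N$ fellow-travel the bottom and top of $f$. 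The $\rho$-thinness of $f$, together with the uniform qi constants ($\le C$) and the bounded cancellation for the inclusions, keeps each pair $\sigma_j,\sigma_{j+1}$ within a uniformly bounded distance along all of $[-m,m]$, so the girths of the $h_j$ are comparable to that of $f$. Applying the assumed flaring to $h_0,\dots,h_{N-1}$ and stacking the resulting inequalities (a standard ladder estimate) yields flaring of $f$ --- after, if necessary, enlarging the flare length to a multiple of $m$, shrinking $\lambda$, and letting the girth threshold $H=H(\rho)$ absorb $N$ and the fellow-traveling constants. This is precisely the argument carried out in \cite[Section 3]{mahan-sardar} for metric (graph) bundles, of which a homogeneous tree of trees equipped with uniform $K$-qi sections is a special case; I would invoke it directly rather than redo it.

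Once the Bestvina--Feighn hallways flare condition is in hand --- with constants $\lambda>1$, $m\ge 1$ and a function $H(\cdot)$ that, thanks to Lemma \ref{lem-qiconsts} and Lemma \ref{lem-qisxn}, can be chosen uniformly over all tuples $\{n_e\}$ --- the combination theorem of \cite{BF} gives that $\til\XX_{reg}$ is Gromov hyperbolic, which is the assertion. (Equivalently, one may cite the combination theorem for metric bundles of \cite{mahan-sardar}, which packages the two steps above together.) I expect the middle step to be the main obstacle: verifying that the finitely many interpolating $K$-qi sections genuinely ``fill'' the hallway $f$, so that their individual flaring propagates to $f$, and carefully tracking how $\rho$ controls both the number $N$ of sections and the girth threshold $H(\rho)$. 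This bookkeeping is exactly what the metric-bundle machinery of \cite{mahan-sardar} supplies, which is why the lemma is stated as a consequence of \cite[Section 3]{mahan-sardar} rather than reproved from scratch here.
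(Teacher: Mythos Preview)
Your proposal is correct and matches the paper's approach: the paper does not give a proof at all but simply records the lemma as a consequence of \cite[Section 3]{mahan-sardar}, exactly the reference you invoke. Your sketch of how the $K$-qi sections from Lemma~\ref{lem-qisxn} are used to pass from flaring of $K$-hallways to the full Bestvina--Feighn flare condition is a reasonable gloss on what that citation contains, and your explicit decision to cite \cite{mahan-sardar} rather than reprove it is precisely what the paper does.
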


\noindent {\bf Constructing special hallways:}
A further refinement to Lemma \ref{mslemma-flare} can be extracted from the proof in \cite[Section 3]{mahan-sardar} along the lines of 
\cite{BF-add}. Towards this, we construct a family of special $K-$hallways. Let $f: [-m,m]\times [0,1] \to \til\XX_{reg}$ be a $K-$hallway. 
Further, let $i, i+1 \in  [-m,m]$ be such that $\pi\circ f (\{i\}\times [0,1])$ and $\pi\circ f (\{i+1\}\times [0,1])$ are both interior points of a subdivided edge $e \in E(\GG)$.
We say that $f: [-m,m]\times [0,1] \to \til\XX_{reg}$ is a \emph{special  $K-$hallway} if for all such $i$,  $f (\{i+1\}\times [0,1])$ equals
 $\phi_e(f (\{i\}\times [0,1]))$ (after identifying both vertex spaces with $G_e$). Then  Lemma \ref{mslemma-flare} can be further refined to
 the following:
 
 \begin{lemma}\label{mslemma-flarespecial}
 	Let $K$ and $(\til\XX_{reg},\TT,\pi_{reg}, \{\til \phi_e\} )$ be as in Lemma \ref{lem-qisxn}. Then, $(\til\XX_{reg},\TT,\pi_{reg}, \{\til \phi_e\} )$ is hyperbolic
 	provided special $K-$hallways flare.
 \end{lemma}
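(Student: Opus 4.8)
The plan is to deduce Lemma \ref{mslemma-flarespecial} from Lemma \ref{mslemma-flare} by proving the reduction ``special $K$-hallways flare $\Rightarrow$ all $K$-hallways flare'', with a possibly smaller exponent $\lambda'>1$ and a larger girth threshold but with the \emph{same} length $2m$. Granting this, Lemma \ref{mslemma-flare} applied to the $K$ of Lemma \ref{lem-qisxn} immediately gives hyperbolicity of $(\til\XX_{reg},\TT,\pi_{reg},\{\til\phi_e\})$. First I would record the local structure of $\GG_{reg}$: a vertex is either a lift of an original vertex of $\GG$, where the edge-to-vertex maps are genuine $C$-quasi-isometric embeddings (Lemma \ref{lem-qiconsts}), or an interior subdivision vertex of some edge $e$, where the two edge-to-vertex maps are the identity and $\til\phi_e$. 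Hence if $\sigma$ is a $K$-qi section over a geodesic $\gamma$ in $\TT$ and $u_j\to u_{j+1}$ is an interior step of a subdivided copy of $e$, then (using $d_{\til\XX_{reg}}(\sigma(u_j),\sigma(u_{j+1}))\le 2K$ and uniform properness of the vertex-space inclusions, Definition \ref{def-tree}(1)) the point $\sigma(u_{j+1})$ lies within a uniform constant $\kappa_0=\kappa_0(K,C,\delta)$ of $\til\phi_e(\sigma(u_j))$ inside $X_{u_{j+1}}$. In particular every column of a $K$-hallway is spanned by two points of uniformly bounded $\til\phi_e$-displacement, so $\til\phi_e$ moves that whole geodesic column by a uniformly bounded amount (thin triangles together with the Morse lemma applied to the $(C,C)$-quasigeodesic $\til\phi_e(\mathrm{column})$).

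Next, given a $\rho$-thin essential $K$-hallway $f:[-m,m]\times[0,1]\to\til\XX_{reg}$, where $m$ is the length constant in the special-flaring hypothesis, I would build a special companion $f^{sp}$ over the same base geodesic: set the central column of $f^{sp}$ equal to that of $f$, and propagate outward, replacing each interior soft step by the exact rule ``column at $u_{j+1}$ equals $\til\phi_e$ of column at $u_j$'' (tightened to a geodesic), and each step through an original vertex by applying the relevant edge-to-vertex quasi-isometry and tightening. By the previous paragraph each propagation step displaces the two spanning points of the column, relative to $f$, by at most $\kappa_0$ and expands the accumulated error by a factor at most $C$; since there are only $m$ steps on each side of the centre, the columns of $f^{sp}$ fellow-travel those of $f$ within a \emph{fixed} constant $E=E(K,C,\delta,m)\le\kappa_1 C^{m}$. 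One then checks that $f^{sp}$ is a legitimate special $K''$-hallway: its columns are geodesics by construction, its two sides are $K''$-qi sections since they stay within $E$ of the $K$-qi sides of $f$, and it is $\rho_0$-thin for a uniform $\rho_0$ (independent of $\rho$) because each of its columns has uniformly bounded $\til\phi_e$-displacement; its base geodesic equals that of $f$, so it is essential.

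Finally I would transfer the flaring. Corresponding columns of $f$ and $f^{sp}$ have lengths differing by at most $2E$, so $\mathrm{girth}(f^{sp})\ge \mathrm{girth}(f)-2E$, and the central columns coincide, so $l(f^{sp}(\{0\}\times I))=l(f(\{0\}\times I))$. Applying the special-flaring hypothesis to $f^{sp}$ at thinness $\rho_0$: if $\mathrm{girth}(f)\ge H_{sp}(\rho_0)+2E$ then $\max\{l(f^{sp}(\{-m\}\times I)),l(f^{sp}(\{m\}\times I))\}\ge\lambda\, l(f^{sp}(\{0\}\times I))$, whence $\max\{l(f(\{-m\}\times I)),l(f(\{m\}\times I))\}\ge\lambda\, l(f(\{0\}\times I))-2E$. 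Taking $\lambda'=(1+\lambda)/2>1$ and $H'(\rho)=\max\{H_{sp}(\rho_0)+2E,\ 2E/(\lambda-\lambda')\}$ (which does not actually depend on $\rho$, since $H_{sp}(\rho_0)$ and $E$ do not), this shows $f$ is $\lambda'$-hyperbolic whenever $\mathrm{girth}(f)\ge H'$. Hence $K$-hallways flare with constants $(\lambda',m)$, and Lemma \ref{mslemma-flare} completes the proof.

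The main obstacle is the bounded-error estimate for the specialization: because $\til\phi_e$ is only a quasi-isometry and has no global bound on displacement, one cannot propagate columns by $\til\phi_e$ over an arbitrarily long stretch of $\TT$ and expect to stay near $f$ --- the error grows like $C^{(\mathrm{length})}$. The argument must exploit two facts at once: that the columns of $f$ already sit in the bounded-$\til\phi_e$-displacement region (forced by the qi-section condition across soft edges), and that only $2m$ propagation steps are ever used (so the geometric blow-up $C^{m}$ is a constant). The finitely many original-vertex crossings inside a length-$2m$ window are handled identically, using the uniform quasi-isometry constant $C$ of Lemma \ref{lem-qiconsts}. Equivalently, one may re-run the argument of \cite[Section 3]{mahan-sardar} and observe that each hallway it produces can be chosen special, which is the viewpoint indicated by \cite{BF-add}.
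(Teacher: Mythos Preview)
The paper offers no proof of this lemma beyond the remark that it ``can be extracted from the proof in \cite[Section 3]{mahan-sardar} along the lines of \cite{BF-add}'', and your final sentence names exactly this route. The bulk of your write-up, however, takes a different and more self-contained path: rather than re-running the Mj--Sardar argument and checking that every hallway arising there may be taken special, you fix an arbitrary $K$-hallway $f$, manufacture a companion special hallway $f^{sp}$ by propagating the central column outward via the structure maps, and then compare column lengths. This straightening argument has the virtue of making the role of the fixed half-length $m$ transparent (the discrepancy between $f$ and $f^{sp}$ grows like $C^{m}$, which is a constant once $m$ is fixed by the special-flaring hypothesis), whereas the paper's approach leaves the reduction implicit inside the cited proof.

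One point to tighten. You conclude only that the sides of $f^{sp}$ are $K''$-qi sections, by proximity to the $K$-qi sides of $f$, yet the hypothesis you then invoke is that \emph{special $K$-hallways} flare, for the specific $K$ of Lemma~\ref{lem-qisxn}; if $K''>K$ this is a mismatch. The fix is already present in your construction: the sides of $f^{sp}$ are obtained by iterating the edge-to-vertex structure maps from a single point, and these are precisely the canonical $K$-qi sections produced by \cite[Proposition 2.10]{mahan-sardar}, so $f^{sp}$ is in fact a special $K$-hallway and the hypothesis applies directly. Alternatively one may note, as the proof of Proposition~\ref{prop-flare} makes apparent, that the special-flaring actually established there depends only on the thinness $\rho_0$ and the column lengths, not on the qi-constant of the bounding sections.
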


In order to prove Theorem \ref{thm-main},  it thus suffices to prove
the following:

\begin{prop}\label{prop-flare}
Let $\pi:\XX \to \GG$ be a homogeneous graph of roses, and let
$\{\phi_e\}, e \in E(\GG)$ be a tuple of hyperbolic rotationless automorphisms such that $(\XX_{reg},\GG,\pi, \{\phi_e\} )$ is an \emph{independent} regluing. 
Then there exist $ n \in \natls$
such that for all $n_e \geq n$, 
the universal cover $(\til\XX_{reg},\TT,\pi_{reg}, \{\til \phi_e\} )$ satisfies the special $K-$hallways flare condition.
Here, $(\til\XX_{reg},\TT,\pi_{reg}, \{\til \phi_e\} )$
is the universal cover of the reglued homogeneous graph of roses 
$(\XX_{reg},\GG_{reg},\pi_{reg}, \{\phi_e\} )$ given by
Lemma \ref{lem-qiconsts}.
\end{prop}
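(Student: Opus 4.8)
The plan is to invoke Lemma~\ref{mslemma-flarespecial}: it suffices to produce constants $\lambda>1$, $m\geq 1$ and, for each $\rho$, a threshold $H(\rho)$, such that every $\rho$-thin essential \emph{special} $K$-hallway of length $2m$ and girth $\geq H(\rho)$ is $\lambda$-hyperbolic, where $K$ is as in Lemma~\ref{lem-qisxn} and all edge-to-vertex inclusions are $C$-quasi-isometries (Lemma~\ref{lem-qiconsts}). There are two mechanisms producing flaring, and the point of passing to \emph{special} hallways is exactly that for such a hallway the fibre over step $i+1$ is the $\phi_e$-image of the fibre over step $i$ whenever the two projections lie in a common subdivided copy of an original edge $e$, so both mechanisms literally apply. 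First, \emph{inside a single original edge}: the hallway restricted to a run inside $e$ is a hallway for (a finite cover of) the mapping torus of $\phi_e$; since $\phi_e$ is hyperbolic this mapping torus is hyperbolic (\cite{brinkmann},~\cite{BF}), so by \cite[Section~5.3]{mahan-sardar} its hallways flare, with constants $m_0,\lambda_0,H_0(\cdot)$ that we take uniform over the finitely many $e\in E(\GG)$ and, crucially, independent of the length of the run. Second, \emph{at a vertex $v$ of $\GG$}: Corollary~\ref{cor-stretch}, the all-but-one stretch lemma, which is exactly where independence of the $\phi_{e_i}$ enters, gives constants $M_v,L_v$ so that once the straightened fibre over a lift $\tilde v$ has length $\geq L_v$, for all but one of the $2k$ choices of incident edge and direction, iterating that edge automorphism at least $M_v$ times at least doubles the length.

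Next I would fix the quantitative parameters. Let $M=\max_v M_v$ and $L=\max_v L_v$. Using that the subdivision vertices of $\GG_{reg}$ have valence two, a maximal run of the projected geodesic $\bar\gamma$ inside one original edge is either a full block of length exactly $n_e\geq n$ or a block at an end of $\bar\gamma$. Choose $n$ and then $m$ so large, in terms of $M$, $m_0$, $\lambda_0$ and the quasi-isometry and bounded-cancellation constants $C,K$, that $\lambda_0^{\lfloor n/m_0\rfloor}$ exceeds, say, $4C^{2M}$, and that a length-$2m$ geodesic in $\TT$ meets the vertices of $\GG$ only within a bounded region, with the exponential growth available on the long sides overwhelming the bounded quasi-isometry distortion incurred over the at most $\lceil 2m/n\rceil$ transitions. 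Finally choose $H(\rho)$ large enough that in a $\rho$-thin special $K$-hallway of girth $\geq H(\rho)$ the straightened fibre over every vertex of $\TT$ that the hallway meets has length $\geq L$ (using the uniform bound $C$ and bounded cancellation to pass between $\til{R}_v$ and its realisations in the $\til{G}_{e_i}$), and $H(\rho)\geq H_0(\rho')$ for the inflated thinness parameter $\rho'$ controlling the mapping-torus hallways.

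Given such a hallway $f\colon[-m,m]\times[0,1]\to\til\XX_{reg}$ with fibre-length function $\ell_i$, I would prove $\max(\ell_{-m},\ell_m)\geq 2\ell_0$. If $\bar\gamma$ lies inside one original edge, iterate the mapping-torus flaring over overlapping windows of half-length $m_0$ to propagate a doubling from the centre out to one end. Otherwise $\bar\gamma$ meets a vertex of $\GG$; the block structure together with the choice of $m$ lets one locate a lift $\tilde v$ with at least $M$ hallway-steps inside a single original edge on the side of $\tilde v$ one wants to exploit, so Corollary~\ref{cor-stretch} applies at $\tilde v$ and doubles $\ell$ after $M$ steps on at least one surviving side; since the mapping-torus estimate on a block forces the fibre-length profile there to be ``valley-shaped'', this doubling is chained along the block and, re-applying Corollary~\ref{cor-stretch} at each further vertex met, out to an endpoint of the hallway, the bounded distortion at the at most $\lceil 2m/n\rceil$ transitions being absorbed by the large exponent. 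Either way one obtains $\lambda$-hyperbolicity with $\lambda=2$, and Lemma~\ref{mslemma-flarespecial} then yields hyperbolicity of $(\til\XX_{reg},\TT,\pi_{reg},\{\til\phi_e\})$, which is Proposition~\ref{prop-flare} and hence, via the reductions of Section~\ref{sec-main}, Theorem~\ref{thm-main}.

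I expect the main obstacle to be this last chaining: bookkeeping the positions along $\bar\gamma$ of the vertices of $\GG$, ensuring at each relevant one there are at least $M$ steps of room inside a single original edge to invoke Corollary~\ref{cor-stretch}, and combining the per-vertex ``all but one doubles'' statements with the ``valley-shaped'' mapping-torus estimates so that one fixed side of the hallway wins globally, all with constants independent of the tuple $\{n_e\}$. Here the subdivision is precisely what creates this room uniformly, and independence is what guarantees, through Corollary~\ref{cor-stretch}, that at every vertex at most one direction fails, so a winning direction always survives the chaining.
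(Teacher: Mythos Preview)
Your approach uses the same two ingredients as the paper---mapping-torus flaring inside a subdivided edge (hyperbolicity of $\phi_e$) and Corollary~\ref{cor-stretch} at original vertices---but organizes them differently, and the difference matters precisely at the point you flag as the main obstacle.

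The paper makes one simplifying choice you do not: it arranges that \emph{any special $K$-hallway under consideration contains at most one original vertex}. With this in place there is no chaining at all. The argument reduces to three cases: (i) the base geodesic contains only subdivision vertices, handled by mapping-torus flaring; (ii) it contains one original vertex away from the endpoints, handled directly by Corollary~\ref{cor-stretch}; (iii) it contains one original vertex within a bounded distance $N$ of an endpoint, handled by combining (i) on the long side with a bounded-distortion constant $C_0$ on the short side. A short concatenation observation (two flaring hallways whose base geodesics overlap in length $\geq N$ still flare) glues these.

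Your route, by contrast, fixes $n$ first and then lets $m$ grow, so a length-$2m$ base geodesic may cross $\lceil 2m/n\rceil$ original vertices, and you must propagate the ``all but one stretch'' conclusion across each of them. The obstacle you identify is genuine: Corollary~\ref{cor-stretch} at a vertex $\tilde v$ says only that at most one incident direction fails to double, and there is no a priori reason the winning directions at successive original vertices point the same way along the hallway. Your proposed fix---absorb the loss at a bad vertex into the exponential gain $\lambda_0^{\lfloor n/m_0\rfloor}$ along the adjacent edge---requires a lower bound on how far the fibre length can drop in the losing direction and then inside the next edge's valley, enough that the length stays above the threshold $L$ needed to re-invoke Corollary~\ref{cor-stretch} at the following vertex, with the net effect over all transitions still a definite stretch. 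This bookkeeping is exactly what the paper sidesteps. If you want to close the gap cleanly, reverse the order of quantifiers: determine the hallway half-length $m$ first from $M_v$, $L_v$, the mapping-torus constants, and the short-side distortion bound, and only then take $n$ large enough (in particular $n>2m$) to force at most one original vertex per hallway.
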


\begin{proof}
The Proposition  will eventually follow from the `All but one stretch'
Corollary \ref{cor-stretch}. For any special $K-$hallway
$f: [-m,m]\times [0,1] \to \til\XX_{reg}$, we shall call 
$\pi\circ f :  [-m,m]\times \{t\} \to \TT$ the base geodesic of the hallway. Further, $\pi\circ f (0,t)$ is called the mid-point
of the base geodesic. Vertices of $\TT$ fall into two classes:
\begin{enumerate}
\item Lifts of $v \in V(\GG)$. These will be called
\emph{original vertices}.
\item Lifts of $v \in V(\GG_{reg})$, where $v$ is a vertex 
at which some $e \in E(\GG)$ is subdivided. These will be called
\emph{subdivision vertices}. Recall that if the regluing map
for $e$ is $\phi_e^{n_e}$, then $e\in \GG$ is subdivided
into $n_e$ edges.
\end{enumerate}
We assume henceforth that all $n_e$ are chosen to be
larger than some $n_0\in \natls$ (to be decided later) so that
any special $K-$hallway that we consider has base geodesic in 
$\TT$ containing at most one original vertex.

By Corollary \ref{cor-stretch},
we can now assume that there exists $n_1 \in \natls$
such that
  any special $K-$hallway with base geodesic 
of length  at least $2n_1$ centered at an original vertex 
satisfies the flaring condition. More precisely, there exists
$A$ such that for all $m \geq n_1$,
any special $K-$hallway 
of girth at least $A$ $f: [-m,m]\times [0,1] \to \til\XX_{reg}$ with $m\geq n_1$ and $\pi_{reg} \circ f
(\{0\} \times [0,1]) = v$, an original vertex satisfies
\begin{equation}\label{eq-flare}
2 l(f(\{ 0 \} \times I)) \leq \, {\rm max} \ \{ l(f(\{ -m \} \times I)),
l(f(\{ m \} \times I)).
\end{equation}

Next, there exists $n_2 \in \natls$ such that for any special $K-$hallway with base geodesic 
of length  at least $2n_2$ and containing only subdivision vertices, Equation \ref{eq-flare}  holds for $m \geq n_2$.
This follows directly from the  hyperbolicity  of the automorphisms $\phi_e$.
We let $N=max\{2n_1, 2n_2\}$. 

We observe now that the concatenation of  two flaring hallways
satisfying Equation \ref{eq-flare} continues to satisfy Equation \ref{eq-flare} provided the overlap of their base
geodesics has length at least $N$. More precisely, let
$[a,b] \subset \TT$ be the base geodesic of a
special $K-$hallway $\HH_1$ and let $[c,d] \subset \TT$
be the base geodesic of a
special $K-$hallway $\HH_2$ such that
\begin{enumerate}
	\item $c \in (a,b)$ and $b \in (c,d)$. Further, $d_\TT (c,b) \geq N$. 
\item $\HH = \HH_1 \cup \HH_2$ is a special $K-$hallway. In particular, over $[c,b] = [a,b] \cap [c,d]$, the qi-sections
(of $[c,b]$) bounding the hallways $\HH_1, \HH_2$ coincide.
\end{enumerate}
Then $\HH$ continues to satisfy Equation \ref{eq-flare}.

It remains to deal with special $K-$hallways whose base geodesics of the form $[a,b]$ contain one original vertex
$v$  such that one of the end-points $a$ or $b$ is at distance
at most $N-1$ from $v$. Thus, the first restriction on $n_0$
(the lower bound on all $n_e$'s) is that $$n_0 \geq 2N.$$
Next, there exists a constant $C_0$ such that for any interval
$[u,v]\subset \TT$ of length at most $N$,  and a special
$K-$hallway $f: [-m,m]\times [0,1] \to \til\XX_{reg}$
with base geodesic $[u,v]$, 
\begin{equation}\label{eq-flare2}
\frac{1}{C_0}	l(f(\{ m \} \times I))\leq 	l(f(\{ -m \} \times I)) \leq \, 
C_0	l(f(\{ m \} \times I)).
\end{equation}

We are finally in a position to determine $n_0$.
Choose $n_0$ such that for all $m \geq n_0-N$, a special 
$K-$hallway with base geodesic of the form $[a,b]$ with
exactly one end-point an original vertex satisfies:

\begin{equation}\label{eq-flare3}
	2C_0 l(f(\{ 0 \} \times I)) \leq \, {\rm max} \ \{ l(f(\{ -m \} \times I)),
	l(f(\{ m \} \times I)).
\end{equation}
It follows from Equation \ref{eq-flare3}, that if $\HH$ is a
special $K-$hallway, whose base geodesic $[a,b]\subset \TT$
of length at least $n_0$ contains exactly one original vertex
$v$  such that  $d(v,a) \leq N$,
then, 
	$$2C_0 l(f(\{ 0 \} \times I)) \leq \, {\rm max} \ \{ l(f(\{ -m \} \times I)), C_0l(f(\{ m \} \times I))\}.$$
In the case that $d(v,b) \leq N$, 
	$$2C_0 l(f(\{ 0 \} \times I)) \leq \, {\rm max} \ \{ C_0l(f(\{ -m \} \times I)), l(f(\{ m \} \times I))\}.$$

In either case (dividing both sides by $C_0$), Equation \ref{eq-flare} is satisfied and we conclude that the special $K-$hallways flare condition is satisfied for $m \geq n_0$.
\end{proof} 

Lemma \ref{mslemma-flare} and Proposition \ref{prop-flare} together complete the proof of Theorem \ref{thm-main}. \hfill $\Box$

As a concluding remark we point out  that the examples of free-by-free hyperbolic groups in \cite{uyanik} and \cite{Gh-18} can be easily reconstructed using Theorem \ref{thm-main}.

\end{document}